\newtheorem{theorem}{Theorem}[section]
\newtheorem{lemma}[theorem]{Lemma}
\newtheorem{example}{Example}[section]
\theoremstyle{definition}
\newcommand{\Real}{\mathbb{R}}
\begin{document}

\title[Transition semi-wave of reaction diffusion equations with free boundaries]{Transition semi-wave solutions of reaction diffusion equations with free boundaries}

\author{Xing Liang}
\author {Tao Zhou}
\address{School of Mathematical Sciences and Wu Wen-Tsun Key Laboratory of Mathematics, University of Science and Technology of China, Hefei, Anhui 230026, China}
\date{5th Sep., 2018 }
\maketitle

\begin{abstract}
In this paper, we define the transition semi-wave solution  (c.f. Definition \ref{def2.1}) of the following reaction diffusion equation with free boundaries
\begin{equation}\label{0.1}
\left\{
   \begin{aligned}
  u_{t}=u_{xx}+f(t,x,u),\ \ &t\in\Real, x<h(t),\\
   u(t,h(t))=0,\ \ &t\in\Real,\\
   h^{\prime}(t)=-\mu u_{x}(t,h(t)),\ \ &t\in\Real,\\
   \end{aligned}
   \right.
\end{equation}
In the homogeneous case, i.e., $f(t,x,u)=f(u)$, under the hypothesis $$
f(u)\in {C}^{1}([0,1]), f(0)=f(1)=0, f^{\prime}(1)<0, f(u)<0\ \text{for}\ u>1,
$$ we prove that the 
semi-wave connecting $1$ and $0$ of \eqref{0.1} is unique provided it exists. Furthermore, we prove that any bounded transition semi-wave connecting $1$ and 0 is exactly the semi-wave. 

In the cases where $f$ is KPP-Fisher type and almost periodic in time (space), i.e., 
$f(t,x,u)=u(c(t)-u)$ (resp. $u(a(x)-u)$) with $c(t)$ (resp. $a(x)$) being almost periodic, 
applying totally different method, we also prove any bounded transition semi-wave connecting the unique almost periodic positive solution of $u_{t}=u(c(t)-u)$ (resp. $u_{xx}+u(a(x)-u)=0$) and $0$
is exactly the unique almost periodic semi-wave of \eqref{0.1}.
Finally, we provide an example of the heterogeneous equation to show the existence of the transition semi-wave without any global mean speeds.
\end{abstract}

\keywords{transition semi-waves, propagation problems, free boundary problems, reaction diffusion equations }

\section{Introduction}

Since the pioneer works of Fisher \cite{F1} and Kolmogorov, Petrovsky, Piskunov \cite{K1} in 1937,
there have been many studies on the spreading phenomena,
especially the traveling wave of reaction diffusion equations.
In the last decades, spreading phenomena in heterogeneous media got more attention from mathematicians.
For the equations in spatially periodic media, \cite{SKT} and \cite{X} gave the definition of the spatially periodic traveling waves independently,
and then \cite{HZ} proved the existence of the spatially periodic traveling waves of Fisher-KPP equations in the distributional sense. From then on, there were many works about traveling waves in spatially periodic media appear, see e.g. \cite{ BH,BHR,BHN, DHZ1,FZ, LiZh, Wein2002 }.  
Matano \cite{M} defined the traveling wave in spatially recurrent diffusive media and also 
discussed its existence, uniqueness and stability in the bistable case. Applying the same idea as in \cite{M},
Shen defined the traveling waves in time almost periodic media and
studied the existence, uniqueness and stability of the traveling waves in bistable case in \cite{S,S2}.
It is known that traveling wave solutions are special examples of the entire solutions that are defined in the whole space and for
all time $t\in\mathbb{R}$. There were many works about the entire solutions, see \cite{CGH,CGN, HN1,HN2, SLW,WLR} and references therein.

In \cite{BH07,BH12}, Berestycki and Hamel introduced a generalized concept of the traveling wave,
named the transition wave, which is still a special kind of the entire solutions and describes a general class of wave-like solutions for reaction diffusion
equations in general heterogeneous media.
Then there were many further works on transition waves for random dispersal,
see \cite{BMH,BGW,BW,DHZ,NR,S3,H,HR,Z} and references therein.
Existence of transition waves of the heterogeneous Fisher-KPP equations with nonlocal dispersal under some assumptions can be found in \cite{LZ} and \cite{SS}.
In \cite{CS}, transition waves of the discrete Fisher-KPP equation in time heterogeneous media were studied.

After the work of Du and Lin \cite{DLi}, spreading phenomena of the following reaction diffusion equation with free boundaries were studied:
\begin{equation}\label{1.1}
\left\{
   \begin{aligned}
  u_{t}=u_{xx}+f(t,x,u),\ \ &t\in\Real, x<h(t),\\
   u(t,h(t))=0,\ \ &t\in\Real,\\
   h^{\prime}(t)=-\mu u_{x}(t,h(t)),\ \ &t\in\Real,\\
   \end{aligned}
   \right.
\end{equation}
where $x=h(t)$ is the moving boundary, and $\mu\in(0,+\infty)$ is a constant.
The so-called semi-wave, which corresponds to the traveling wave of the Cauchy problem, is used to describe the spreading phenomena.
In the case where $f(t,x,u)=u(a_{1}(t)-b_{1}(t)u)$ for some positive $L$-periodic functions
$a_{1}(t)\ \text{and}\ b_{1}(t)$, the existence and uniqueness of a positive time periodic semi-wave were proved in \cite{DGP}.
In the case where $f(t,x,u)=u(a_{2}(x)-b_{2}(x)u)$ for some positive $L$-periodic functions
$a_{2}(x)\ \text{and}\ b_{2}(x)$, the existence and uniqueness of the positive spatial periodic semi-wave were proved in \cite{DLiang}.
Moreover, \cite{Zh} proved the existence of a positive spatial periodic semi-wave by a different method.
Besides above works, \cite{DLo} gave a complete classification of the spatial-temporal dynamics of the solutions in the case where $f(t,x,u)\equiv f(u)$
is monostable, bistable or combustion.
%one of the following cases:\\
%($f_{M}$) Monostable: $f(0)=f(1)=0, f^{\prime}(0)>0, f^{\prime}(1)<0, f(u)>0\ \forall u\in(0,1).$\\
%($f_{B}$) Bistable: $f(0)=f(\theta)=f(1)=0\ \text{for some}\ \theta\in(0,1), f^{\prime}(0)<0, f^{\prime}(1)<0, f(u)<0\ \forall u\in(0,\theta), f(u)>0\ \forall u\in(\theta,1),$ and $\int_{0}^{1}f(u)du>0.$\\
%$(f_{C})$ Combustion: $f(1)=f(u)=0\ \forall u\in[0,\theta]\ \text{for some}\ \theta\in(0,1), f^{\prime}(1)<0,
%f(u)>0\ \forall u\in(\theta,1),$ and $f$ is increasing in $(\theta,\theta+\delta_{0})$ for some $\delta_{0}>0$ small.

Recently, the existence and uniqueness of the time almost periodic semi-wave (c.f. Definition \ref{timeTW}) of \eqref{1.1} with the
time almost periodic KPP-Fisher type nonlinearity $f$ has been established in \cite{LLS}.
More precisely, they studied the following problem:
\begin{equation}\label{time}
\left\{
   \begin{aligned}
  u_{t}=u_{xx}+u(c(t)-u),\ \ &t\in\Real, x<h(t),\\
   u(t,h(t))=0,\ \ &t\in\Real,\\
   h^{\prime}(t)=-\mu u_{x}(t,h(t)),\ \ &t\in\Real,\\
   \end{aligned}
   \right.
\end{equation}
where $c(t)$ is an almost periodic function in $t\in\Real$ with
$\lim\limits_{t\to+\infty}\frac{1}{t}\int_{0}^{t}c(s)ds>0$.
%(H1) $c(t,u)$ is almost periodic in $t$ uniformly with respect to $u$ in bounded sets;\\
%(H2) $c\in\mathcal{C}^{1}(\Real^{2})$, $Dc=(c_{t},c_{u})$ is bounded in $t\in\Real$ and in
%$u$ in bounded sets, and $\sup\limits_{t\in\Real,u\geq0}c_{u}(t,u)<0$ and there exists $M>0$ such that $\sup\limits_{t\in\Real,u\geq M}c(t,u)<0$;\\
%(H3) $c(t,u)$ and $Dc(t,u)=(c_{t}(t,u),c_{u}(t,u))$ are almost periodic in $t$ uniformly with respect to $u$ in bounded sets;\\
%(H4) $\lim\limits_{t\to+\infty}\frac{1}{t}\int_{0}^{t}c(s,0)ds>0$.\\
%Here $c(t,u)$ is said almost periodic in $t$ uniformly with respect to $u$ in bounded sets
%if $c(\cdot,u)$ is almost periodic for each $u$, and $c$ is uniformly continuous in $t\in\Real$ and $u$ in bounded sets.

 \cite{L} showed the existence and uniqueness of the space almost periodic semi-wave (c.f. Definition \ref{spaceTW}) of \eqref{1.1} with the space almost periodic KPP-Fisher type nonlinearity $f$.
More precisely, \cite{L} studied
\begin{equation}\label{space}
\left\{
   \begin{aligned}
  u_{t}=u_{xx}+u(a(x)-u),\ \ &t\in\Real, x<h(t),\\
   u(t,h(t))=0,\ \ &t\in\Real,\\
   h^{\prime}(t)=-\mu u_{x}(t,h(t)),\ \ &t\in\Real,\\
   \end{aligned}
   \right.
\end{equation}
where $a(x)$ is a positive almost periodic function in $x\in\Real$.

Until now, there is no work considering transition wave solutions of \eqref{1.1}.

In the present paper, we consider the transition semi-waves for problem \eqref{1.1}.
For any continuous function $h$ on $ \mathbb{R}$, we denote $\Omega_{h}=\{(t,x): t\in \Real, x<h(t)\}.$
From now on, we will say that an entire solution $(u(t,x),h(t))$ of \eqref{1.1} is positive or bounded provided $u$ is positive or bounded in $\Omega_{h}.$
\newtheorem{defn}{Definition}[section]
\begin{defn}\label{def2.1}
Let $p=p(t,x), (t,x)\in \Real^{2},$ be a positive entire solution of
$u_{t}=u_{xx}+f(t,x,u)$ 
and $(u,h)=(u(t,x),h(t))$ be a positive entire solution of \eqref{1.1}. $(u,h)$ is said to be a transition semi-wave solution (shortly, transition semi-wave)
connecting $p$ and $0$ provided
\begin{equation}\label{defofTranW}
\lim\limits_{x\to-\infty}|u(t,x+h(t))-p(t,x+h(t))|=0
\end{equation}
uniformly in $t\in\Real$.
\end{defn}

The main result of this paper is as follows.
\\
{\bf Main result}

{\it The bounded transition semi-wave $(u(t,x),h(t))$ connecting $p$ and $0$ is
exactly a semi-wave up to a translation in the following three cases:
\begin{enumerate}
\item[(1)] $f(t,x,u)\equiv f(u)$ with $$
f(u)\in {C}^{1}([0,1]), f(0)=f(1)=0, f^{\prime}(1)<0, f(u)<0\ \text{for}\ u>1,
$$ $p(t,x)\equiv1$ and
\eqref{1.1} possesses a semi-wave connecting $1$ and $0$; specially, in this case the semi-wave connecting $1$ and $0$ is unique;
\item[(2)]  $f(t,x,u)=u(c(t)-u)$, where $c(t)$ is an almost periodic function with $\lim\limits_{t\to+\infty}\frac{1}{t}\int_{0}^{t}c(s)ds>0$, and $p(t,x)=u_{c}(t)$ is the unique almost periodic positive solution of $u_{t}=u(c(t)-u)$;
\item[(3)]  $f(t,x,u)=u(a(x)-u)$, where $a(x)$ is an almost periodic positive function, and $p(t,x)=v_{a}(x)$ is the unique 
almost periodic solution solution of $u_{xx}+u(a(x)-u)=0$.
\end{enumerate}
} 

This result means that the free boundary problem may have simpler spatial-temporal dynamics than the respective Cauchy problem.

The paper is organized as follows. In Section 2, we introduce the definitions and state the main results of this paper.
in Section 3, we show some properties of transition semi-wave.
In Section 4, we prove the main result in Case 1.
In Section 5, we prove the main result in Cases 2 and 3.
Finally, in Section 6, we construct  an example of the heterogeneous equation to show the existence of the transition semi-wave without any global mean speeds.

\section{Preliminary: Definitions, notations and results}

In this paper, we always assume that $f:\mathbb{R}\times\mathbb{R}\times[0,+\infty)\to\mathbb{R}$
$$(t,x,u)\to f(t,x,u)$$
is continuous,  of class $C^{\alpha/2,\alpha}(\Real^{2})$ in $(t,x)$ locally uniformly for $u\in\Real$, with $\alpha\in(0,1)$, i.e., 
$\sup\limits_{u\leq M}\|f(\cdot,u)\|_{C^{\alpha/2,\alpha}(\Real^{2})}<+\infty$ for any $M>0$,
that $\partial_{u}f(t,x,u)$ is continuous with $\sup\limits_{(t,x)\in\mathbb{R}^{2},|u|\leq M} \big|\partial_{u}f(t,x,u)\big|<+\infty$ for any $M>0$,
and that  $f(t,x,0)\equiv0$ for any $(t,x)\in\Real^{2}$.

An important notion which is attached to a transition semi-wave is its global mean speed of propagation.

\begin{defn}\label{defgmeans}
We say that the transition semi-wave $(u,h)$ of \eqref{1.1} has a global mean speed $c$ if
$$\frac{|h(t)-h(s)|}{|t-s|}\to c\ \text{as}\ |t-s|\to+\infty.$$
\end{defn}
We will prove in Theorem \ref{thm3.1} that the global mean speed is unique among a certain class of transition semi-waves,
and any such waves can be compared up to shift. However, the global mean speed  does not always exist in general.
In the last section of this paper, we will construct transition semi-waves, which do not have global mean speeds
(c.f. Example \ref{ex6.1}).

Now let us consider transition semi-waves for different kinds of reaction terms.  

First, we will consider the homogeneous case, i.e., $f(t,x,u)=f(u)$.
Assume that $f$ satisfies  \begin{equation}\label{f}
f(u)\in {C}^{1}([0,1]), f(0)=f(1)=0, f^{\prime}(1)<0, f(u)<0\ \text{for}\ u>1.
\end{equation} 
It is easy to see that $f$ satisfies \eqref{f} when $f\in {C}^{1}([0,1])$ is one of the following three types mentioned in \cite{DLo}:
\begin{enumerate}
	\item[($f_{M}$)] Monostable: $f(0)=f(1)=0, f^{\prime}(0)>0, f^{\prime}(1)<0, (1-u)f(u)>0\ \text{for}\ u>0, u\ne0.$
	\item[($f_{B}$)]  Bistable: $f(0)=f(\theta)=f(1)=0\ \text{for some}\ \theta\in(0,1), f^{\prime}(0)<0, f^{\prime}(1)<0, f(u)<0\ \text{for}\ u\in(0,\theta)\cup(1,+\infty), f(u)>0\ \text{for}\ u\in(\theta,1),$ and $\int_{0}^{1}f(u)du>0.$
	\item[$(f_{C})$]  Combustion: $f(u)=f(1)=0\  \text{in}\ [0,\theta]\ \text{for some}\ \theta\in(0,1), f^{\prime}(1)<0,
	f(u)<0\ \text{for}\ u\in(1,+\infty), f(u)>0\ \text{for}\ u\in(\theta,1)$, and $f$ is increasing in $(\theta,\theta+\delta_{0})$ for some $\delta_{0}>0$ small.
\end{enumerate}
%($f_{M}$) Monostable: $f(0)=f(1)=0, f^{\prime}(0)>0, f^{\prime}(1)<0, f(u)>0\ \forall u\in(0,1).$\\
%($f_{B}$) Bistable: $f(0)=f(\theta)=f(1)=0\ \text{for some}\ \theta\in(0,1), f^{\prime}(0)<0, f^{\prime}(1)<0, f(u)<0\ \forall u\in(0,\theta), f(u)>0\ \forall u\in(\theta,1),$ and $\int_{0}^{1}f(u)du>0.$\\
%$(f_{C})$ Combustion: $f(1)=f(u)=0\ \forall u\in[0,\theta]\ \text{for some}\ \theta\in(0,1), f^{\prime}(1)<0,
%f(u)>0\ \forall u\in(\theta,1),$ and $f$ is increasing in $(\theta,\theta+\delta_{0})$ for some $\delta_{0}>0$ small.

\begin{defn}\label{homo-sw}
	A positive entire solution $(u(t,x),h(t))$ of \eqref{1.1} with $f=f(u)$ satisfying \eqref{f} is called a semi-wave connecting 1 and 0 if
	\begin{enumerate}
		\item[(1)] $u(t,x)$ can be written as $u(t,x)=q(h(t)-x),$ where $q\in C^{2}([0,+\infty))$,
		\item[(2)] $h^{\prime}(t)$ is a positive constant,
		\item[(3)] $\lim\limits_{x\to-\infty}u(t,x+h(t))=1$ uniformly in $t\in\mathbb{R}$.
	\end{enumerate}	
\end{defn}
It is easy to find that $q$ satisfies:
\begin{equation}\label{2.1}
\left\{
\begin{aligned}
q^{\prime\prime}-cq^{\prime}+f(q)=0\ \text{in}\ (0,+\infty),\\
q(z)>0\ \text{in}\ (0,+\infty),\\
q(0)=0, q(+\infty)=1, q^{\prime}(0)=\frac{c}{\mu},\\
\end{aligned}
\right.
\end{equation}
where $c$ is a constant equal to $h^{\prime}(t)$.
On the other hand, a solution of \eqref{2.1} gives a semi-wave $(q(ct-x),ct)$ of \eqref{1.1} with $f=f(u)$.
Note that the equation \eqref{2.1} may have no solution in general.
Our first result of this paper is as following:
\newtheorem{thm}{Theorem}[section]
\begin{thm}\label{thmhomo}
Assume that $f(t,x,u)=f(u)$ satisfies \eqref{f} and that $(c^{*},q_{c^{*}})$ is a solution of \eqref{2.1}.
Let $(u(t,x),h(t))$ be a bounded transition semi-wave of \eqref{1.1} which connects $1$ and $0$.
Then $u(t,x)=q_{c^{*}}(h(t)-x)$ and $h(t)=c^*t+h(0)$.
\end{thm}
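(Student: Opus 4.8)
The plan is to compare the given transition semi-wave $(u,h)$ with the exact semi-wave $(q_{c^*}(k(t)-x),k(t))$, $k(t)=c^*t+b$, and to run a sliding argument that pins the relative shift to zero.

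\emph{Step 1 (a priori estimates and comparison waves).} First I would record the elementary facts that make the comparison machinery available. Since $f(1)=0$ and $f(u)<0$ for $u>1$, the constant $1$ is a supersolution, so $0<u<1$ in $\Omega_{h}$ by the strong maximum principle; Hopf's lemma at $x=h(t)$ gives $u_{x}(t,h(t))<0$, hence $h'(t)=-\mu u_{x}(t,h(t))>0$, and boundary parabolic estimates together with $u\le 1$ bound $h'$ from above. Because $f'(1)<0$, the convergence $u(t,x+h(t))\to 1$ is exponentially fast and uniform in $t$, and interior/boundary Schauder estimates give uniform $C^{1+\alpha}$ bounds, so any sequence of space--time translates of $(u,h)$ is precompact and its limits are again bounded transition semi-waves connecting $1$ and $0$ (here I also invoke Theorem~\ref{thm3.1}). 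A direct computation using \eqref{2.1} and $q_{c^*}'>0$ shows that $(q_{c^*}(k(t)-x),k(t))$ is a supersolution of \eqref{1.1} precisely when $k'(t)\ge c^*$ and a subsolution when $k'(t)\le c^*$; in particular every $k(t)=c^*t+b$ yields an exact solution, which is the family I will slide.

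\emph{Step 2 (monotone shifts).} Writing $U(t,z)=u(t,h(t)-z)$ for $z\ge 0$ (so $U(t,0)=0$ and $U(t,\cdot)\to1$), I would introduce the minimal dominating and dominated shifts
\[
\tau(t)=\inf\{\tau\ge0:\ U(t,z)\le q_{c^*}(z+\tau)\ \forall z\ge0\},\qquad
\sigma(t)=\inf\{\sigma\ge0:\ U(t,z)\ge q_{c^*}(z-\sigma)\ \forall z\ge\sigma\},
\]
both finite and nonnegative by the uniform exponential convergence of Step~1. Choosing at a base time $t_{0}$ the exact semi-wave with $k(t)=h(t_{0})+\tau(t_{0})+c^*(t-t_{0})$, the ordering $u(t_{0},\cdot)\le q_{c^*}(k(t_{0})-\cdot)$ and $h(t_{0})\le k(t_{0})$ holds by definition of $\tau(t_{0})$, so the comparison principle for free boundary problems of the form \eqref{1.1} (cf.\ \cite{DLi}) propagates it to all $t\ge t_{0}$; translating back gives $\tau(t)+h(t)-c^*t\le\tau(t_{0})+h(t_{0})-c^*t_{0}$. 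Hence $\Phi(t):=\tau(t)+h(t)-c^*t$ is non-increasing, and symmetrically $\Psi(t):=\sigma(t)-h(t)+c^*t$ is non-increasing. Moreover, if $\tau(t_{0})>0$ then $k(t_{0})>h(t_{0})$ strictly and $q_{c^*}(k(t_{0})-h(t_{0}))>0=u(t_{0},h(t_{0}))$, so the strong maximum principle and Hopf's lemma at the moving boundary make the comparison strict for $t>t_{0}$, forcing $\Phi$ to \emph{strictly} decrease on any interval where $\tau>0$ (and likewise for $\Psi,\sigma$).

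\emph{Step 3 (rigidity via the entire structure).} The function $g:=\tau+\sigma=\Phi+\Psi$ is non-increasing on all of $\mathbb{R}$ and bounded in $[0,C]$, so $\sup_{\mathbb{R}}g=\lim_{t\to-\infty}g(t)=:\ell\ge0$. To evaluate $\ell$ I would translate in time: for $s_{n}\to-\infty$ the waves $\big(u(\cdot+s_{n},\cdot+h(s_{n})),\,h(\cdot+s_{n})-h(s_{n})\big)$ converge along a subsequence to a bounded transition semi-wave $(u_{\infty},h_{\infty})$, and since $\tau,\sigma$ pass to the limit we get $\tau_{\infty}+\sigma_{\infty}\equiv\ell$. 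The limit wave inherits the monotonicity of Step~2, so $\Phi_{\infty},\Psi_{\infty}$ are non-increasing with $\Phi_{\infty}+\Psi_{\infty}\equiv\ell$; two non-increasing functions with constant sum are both constant, and the strict-decrease property then forces $\tau_{\infty}\equiv\sigma_{\infty}\equiv0$, i.e.\ $\ell=0$. Since $0\le g\le\sup g=\ell=0$, we conclude $\tau\equiv\sigma\equiv0$ for the original wave.

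\emph{Step 4 (conclusion and main obstacle).} Now $\tau\equiv0$ gives $U(t,z)\le q_{c^*}(z)$ and $\sigma\equiv0$ gives $U(t,z)\ge q_{c^*}(z)$, whence $u(t,x)=q_{c^*}(h(t)-x)$ on $\Omega_{h}$. The free boundary condition then reads $h'(t)=-\mu u_{x}(t,h(t))=\mu q_{c^*}'(0)=c^*$, so $h(t)=c^*t+h(0)$, as claimed; uniqueness of the semi-wave connecting $1$ and $0$ follows at once, since any such semi-wave is in particular a bounded transition semi-wave. The main obstacle is Step~3: the comparison principle only propagates orderings forward in time, so the monotone shifts cannot be upgraded to constancy by a one-sided argument. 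The resolution is to exploit that $(u,h)$ is an \emph{entire} solution, extracting the $t\to-\infty$ limit and using the elementary rigidity that two non-increasing functions with constant sum are constant; a secondary technical point is establishing the strong comparison (strict separation of the shifts) up to the moving free boundary.
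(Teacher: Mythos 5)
Your overall strategy (dominate $u$ from above and below by translates of the exact semi-wave and slide) is reasonable, but two of its load-bearing steps are not justified, and they are precisely the points the paper's proof is engineered to avoid.

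First, the finiteness and uniform boundedness of $\tau(t)$ and $\sigma(t)$ do not follow from ``uniform exponential convergence.'' Since $1-q_{c^*}(z+\tau)\sim A\me^{-\lambda\tau}\me^{-\lambda z}$ with $\lambda=\bigl(\sqrt{(c^*)^2-4f'(1)}-c^*\bigr)/2$, the inequality $U(t,z)\le q_{c^*}(z+\tau)$ for all $z\ge 0$ requires the \emph{lower} bound $1-U(t,z)\gtrsim \me^{-\lambda z}$, while $U(t,z)\ge q_{c^*}(z-\sigma)$ requires the \emph{upper} bound $1-U(t,z)\lesssim \me^{-\lambda z}$ --- both with the exact rate $\lambda$ of the wave. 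If $1-U$ decays faster than $\me^{-\lambda z}$ then $\tau(t)=+\infty$ for every $t$; if slower, $\sigma(t)=+\infty$. Nothing in the hypotheses (Definition \ref{def2.1} gives no rate, and $h'$ is a priori only pinched between positive constants, so the decay rate of $1-U$ in the frame of $h$ need not match that of $q_{c^*}$) delivers these two-sided sharp tail estimates, and your Step 1 does not prove them. The boundedness of $g=\tau+\sigma$ as $t\to-\infty$, which you need for $\ell<\infty$, is asserted with no argument and is essentially equivalent to the boundedness of $|h(t)-c^*t|$, i.e.\ to the paper's Lemma \ref{lem4.5}. The paper sidesteps the tail-matching problem entirely: it first proves $u\le 1$ (Lemma \ref{lem4.4}) and then uses the multiplicative barriers $(1\pm M\me^{-\delta t})q_{c^*}(\cdot)$ of Du--Matsuzawa--Zhou, whose far-field values straddle $1$ by a definite margin, to bound $|h(t)-c^*t|$; the subsequent sliding is done with translates of $u$ against \emph{itself}, where an additive $\varepsilon$-relaxation ($\varepsilon^*=\inf\{\varepsilon: v^{\tau,\xi}+\varepsilon\ge v\}$) handles the far field without any rate information.

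Second, the rigidity in Step 3 rests on a ``strict decrease'' of $\Phi$ when $\tau>0$ that does not follow from the strong maximum principle. Strict pointwise ordering $U(t,z)<q_{c^*}(z+\tau(t))$ for $t>t_0$ does not let you decrease the shift, because the gap $q_{c^*}(z+\tau)-U(t,z)$ always tends to $0$ as $z\to+\infty$; its infimum over $z\ge0$ is $0$ whether or not $\tau$ is optimal. Hence ``two non-increasing functions with constant sum are constant'' gives you constancy of $\Phi_\infty,\Psi_\infty$ but no contradiction with $\tau_\infty>0$. Closing this loop is exactly the hard technical content of the paper's argument: the chain-of-parabolic-cylinders propagation of smallness (as in Propositions \ref{prop3.1}--\ref{propspaceshift}) shows that a vanishing gap must be realized at bounded distance from the free boundary, where it is excluded by Hopf's lemma at $x=h(t)$ (Steps 2--3 of the proof of Theorem \ref{thmhomo}). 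Without an argument of this type, your Step 3 does not conclude. Note also that the endgame differs: the paper obtains $v(t+\tau,x)\ge v(t,x)$ for \emph{all} $\tau\in\Real$, hence $v_t\equiv 0$, and only then invokes the uniqueness of \eqref{2.1} (Theorem \ref{thm4.1}) to identify the profile; your route would also need that uniqueness statement to land on $q_{c^*}$, but the decisive gaps are the two above.
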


Second, we will consider problems \eqref{time} and \eqref{space}.

For time periodic case, it is known that
\begin{equation}\label{timeode}
u_{t}=u(c(t)-u)
\end{equation} has a unique positive almost periodic solution $u_{c}(t)$ (e.g. see \cite{LLS}).

Then we can give the definition of time almost periodic semi-waves of \eqref{time}.
\begin{defn}\label{timeTW}
(\cite[Definition 2.4]{LLS}).
A positive entire solution $(u(t,x),h(t))$ of \eqref{time} is called an almost periodic semi-wave connecting $u_{c}(t)$ and $0$ if
\begin{enumerate}
	\item[(1)] $u(t,x)$ can be written as $u(t,x)=q(t,x-h(t)),$ where $q(\tau,\xi)\in C^{2}(\mathbb{R}\times(-\infty,0]))$ is almost periodic in $\tau$ uniformly with respect to $\xi\leq0$, 
	\item[(2)] $h^{\prime}(t)$ is an almost periodic function,
	\item[(3)] $\lim\limits_{x\to-\infty}u(t,x+h(t))=u_{c}(t)$ uniformly in $t\in\Real$.
\end{enumerate}	
\end{defn}

Consider
\begin{equation}\label{fixedbd}
\left\{
   \begin{aligned}
  w_{t}=w_{xx}-\mu w_{x}(t,0)w_{x}+w(c(t)-w),\ \ &t\in\Real, x<0,\\
   w(t,0)=0,\ \ &t\in\Real.\\
   \end{aligned}
   \right.
\end{equation}
It is easy to see that an almost periodic semi-wave solution of \eqref{time} induces a positive almost periodic entire solution of \eqref{fixedbd}, and vice versa.
%\begin{thm}\label{thmtime} 	(\cite[Theorem 2.1]{LLS}).
% Assume $c(t)$ is an almost periodic function in $t\in\Real$ with $\lim\limits_{t\to+\infty}\frac{1}{t}\int_{0}^{t}c(s)ds>0$.
%\begin{enumerate}
%  \item[(i)]  There is a time almost periodic positive semi-wave solution $(\phi(t,x),\zeta(t))$	of \eqref{time}. Moreover, $\phi(t,x)<u_{c}(t)$.
%  \item[(ii)]  If $(\tilde{\phi}(t,x),\tilde{\zeta}(t))$ is also a time almost periodic positive semi-wave of \eqref{time}, then there exists  $x_{0}\in\Real$ such that $(\tilde{\phi}(t,x),\tilde{\zeta}(t))=(\phi(t,x+x_{0}),\zeta(t)-x_{0})$.
%  \item[(iii)] For any bounded positive solution $w(t,x)$ of \eqref{fixedbd} with $\liminf\limits_{x\to-\infty}\inf\limits_{t\geq0}w(t,x)>0$, $$\lim\limits_{t\to+\infty}[\phi(t,x+\zeta(t))-w(t,x)]=0$$	uniformly in $x\leq0$.
%	\end{enumerate}
%\end{thm}
Let $X=\{u\ \text{is continuous}: \inf\limits_{x\geq\varepsilon}u(x)>0\ \text{for any}\ \varepsilon>0, u^{\prime}(0)<0\}$.
Then for any $w_{1},w_{2}\in X$ with $w_{1}(\cdot)\leq w_{2}(\cdot)$, we can define a part metric $\rho(w_{1},w_{2})$ between $w_{1}$ and $w_{2}$ as follows:
$$\rho(w_{1},w_{2})=\inf\{\ln\alpha: \alpha\geq1, w_{2}\leq\alpha w_{1}\}.$$
%We will introduce some notions, which can be found in \cite{LLS}.
%Let $C^{b}_{unif}(-\infty,0]=\{u\in C(-\infty,0]: u\ \text{is bounded and uniformly continuous}\}$ and C^{1,b}_{unif}(-\infty,0]=\{u\in C^{b}_{unif}(-\infty,0]: u^{\prime}\in C^{b}_{unif}(-\infty,0]\}$.
%The operator $A=-\Delta$ with $\mathcal{D}(A)=\{u\in C^{b}_{unif}(-\infty,0]: u(0)=0, u^{\prime}, u^{\prime\prime}\in C^{b}_{unif}(-\infty,0]\}$ is a sectorial operator.
%Let $0<\alpha<1$ be such that $\mathcal{D}(A^{\alpha})\in  C^{1,b}_{unif}(-\infty,0]$. Fix such an $\alpha$.
%Let $X=\mathcal{A}^{\alpha}, X^{+}=\{u\in X: u(x)\geq0\ \text{for}\ x\in(-\infty,0]\}$,
With the help of the part metric, \cite{LLS} proved the following theorem:
\begin{thm}\label{thmtime}
(\cite[Theorem 2.1]{LLS}).
Assume $c(t)$ is an almost periodic function in $t\in\Real$ with
$\lim\limits_{t\to+\infty}\frac{1}{t}\int_{0}^{t}c(s)ds>0$. Then
there is a time almost periodic positive semi-wave solution $(\phi(t,x),\zeta(t))$
of \eqref{time} connecting $u_c$ and $0$. Moreover, the time almost periodic positive semi-wave solution connecting $u_c$ and $0$ is unique up to the space translation.
\end{thm}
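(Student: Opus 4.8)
The plan is to pass to the moving frame and reduce the statement to the existence and uniqueness of a positive, time‑almost‑periodic entire solution of the fixed‑boundary problem \eqref{fixedbd}. Writing $w(t,\xi)=u(t,\xi+\zeta(t))$ with $\xi=x-\zeta(t)\le 0$ turns \eqref{time} into \eqref{fixedbd}, as already noted in the excerpt, with the advection coefficient $\zeta'(t)=-\mu w_\xi(t,0)$; conversely a positive almost periodic $w$ solving \eqref{fixedbd} with $w(t,0)=0$ and $w(t,\xi)\to u_c(t)$ as $\xi\to-\infty$ uniformly in $t$ recovers a semi‑wave in the sense of Definition \ref{timeTW}. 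Before constructing $w$ I would record the a priori structure forced on any bounded positive entire solution. Comparison with the spatially constant solution $u_c(t)$ of the ODE \eqref{timeode} (which is an exact solution of the interior equation in \eqref{fixedbd} for \emph{any} advection coefficient, but violates the boundary condition) gives $0<w(t,\xi)\le u_c(t)$, hence a uniform bound $0<w\le M:=\sup_t u_c(t)$. Interior–boundary Schauder estimates then bound $w_\xi$, in particular the flux $\gamma(t):=-\mu w_\xi(t,0)$, and the Hopf lemma yields $\gamma(t)>0$; a sliding argument gives monotonicity $w_\xi<0$, so each time slice lies in the class $X$ on which the part metric $\rho$ is defined (this is precisely the condition $u'(0)<0$ in the definition of $X$).

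For existence I would decouple the nonlocal boundary‑flux term. For a prescribed positive almost periodic $\gamma$, the linear‑advection problem $w_t=w_{\xi\xi}+\gamma(t)w_\xi+w(c(t)-w)$ on $\{\xi<0\}$ with $w(t,0)=0$ is a genuine parabolic equation with almost periodic coefficients, and KPP sublinearity together with the contraction of $\rho$ along the flow furnishes a unique, globally attracting, bounded positive almost periodic entire solution $w[\gamma]$ with $w[\gamma](t,\xi)\to u_c(t)$ uniformly as $\xi\to-\infty$; I would construct it as the monotone $n\to\infty$ limit of the solutions on $(-n,0)$ with data $u_c$ at $\xi=-n$ and $0$ at $\xi=0$, using the uniform estimates above for compactness. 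One then sets $\mathcal{T}\gamma(t):=-\mu\,\partial_\xi w[\gamma](t,0)$ and solves the fixed‑point equation $\gamma=\mathcal{T}\gamma$ by a Schauder argument in a bounded positive cone of almost periodic functions, the needed compactness coming again from the Schauder estimates. A fixed point yields a solution of the fully coupled problem \eqref{fixedbd}, and hence the semi‑wave, with $\zeta'=\gamma$ almost periodic.

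For uniqueness and almost periodicity I would rely on the part metric. Any two bounded positive entire solutions $w^1,w^2$ are trapped in $(0,u_c]$ and both vanish linearly at $\xi=0$ and tend to $u_c$ at $-\infty$, so $\rho(w^1(t,\cdot),w^2(t,\cdot))$ is finite; along the flow it is non‑increasing and, by strict sublinearity of $u(c-u)$, strictly decreasing unless $w^1\equiv w^2$, the extra order‑preserving advection $\gamma(t)w_\xi$ not affecting this monotonicity. Recurrence of the almost periodic coefficient $c$ then forces $\rho\equiv 0$, so $w^1=w^2$. In the moving frame the profile $w=q$ is therefore unique, and the only remaining freedom in the semi‑wave is the additive constant in $\zeta$ (the position of the free boundary at a reference time), which is exactly the space translation in the statement. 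The same strict contraction, combined with the standard principle that a uniformly stable bounded solution of a system with almost periodic coefficients is almost periodic—realized by lifting to the hull of $c$ and using continuity of the unique entire solution in the hull variable—shows that $w$, hence $q$ and $\zeta'=\gamma$, are almost periodic in $t$.

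I expect the main obstacle to be the nonlocal, nonlinear boundary‑flux term $-\mu w_\xi(t,0)w_\xi$, which couples the interior dynamics to the unknown speed. One must simultaneously control the boundary gradient \emph{uniformly in $t$} (to close the fixed‑point map $\mathcal{T}$ and keep $\gamma$ in a compact almost periodic class) and check that this term damages neither the comparison principle nor the strict decrease of $\rho$. The delicate technical point is the passage from the finite‑domain almost periodic solutions to the entire one while preserving both the uniform limit $w\to u_c$ at $\xi=-\infty$ and almost periodicity in $t$.
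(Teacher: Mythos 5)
First, note that this theorem is not proved in the present paper at all: it is quoted verbatim from \cite{LLS} (their Theorem 2.1), and the only hint the paper gives about its proof is the sentence ``With the help of the part metric, \cite{LLS} proved the following theorem.'' So there is no in-paper argument to compare yours against; your uniqueness step via the part metric is at least consistent with the method attributed to \cite{LLS}, and your reduction to the fixed-boundary problem \eqref{fixedbd} together with the bound $0<w\le u_c$ matches what the paper itself uses later in the proof of Theorem \ref{heterotime}.

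That said, two steps of your sketch have genuine gaps. (i) For existence you propose a Schauder fixed point for $\gamma\mapsto\mathcal{T}\gamma=-\mu\,\partial_\xi w[\gamma](\cdot,0)$ in ``a bounded positive cone of almost periodic functions, the needed compactness coming again from the Schauder estimates.'' Schauder estimates give uniform bounds and equicontinuity, hence precompactness only in $C_{loc}(\Real)$; they do not give precompactness in the space of almost periodic functions with the sup norm on all of $\Real$ (translates of a fixed bump are uniformly bounded and equicontinuous but admit no uniformly convergent subsequence on $\Real$). Without a common modulus of almost periodicity for the family $\{\mathcal{T}\gamma\}$ the fixed-point argument does not close; this is presumably why \cite{LLS} instead obtain the entire solution as a limit and recover almost periodicity a posteriori from uniqueness on the hull of $c$ --- the route you yourself describe only for the final step. (ii) For uniqueness you assert that the advection term does not affect the strict decrease of $\rho$, but in the coupled problem the drift is $-\mu w^i_\xi(t,0)$ and hence is \emph{different} for the two solutions being compared; if $w^2\le\alpha w^1$, then $\alpha w^1$ satisfies the equation with the ``wrong'' drift, so it is not immediately a supersolution for the equation solved by $w^2$, and the standard sublinearity argument for the contraction of the part metric does not apply verbatim. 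You flag exactly this difficulty in your closing paragraph but do not resolve it; it is the content of \cite[Lemma 4.6]{LLS}, which the present paper also invokes only as a black box in the proof of Theorem \ref{heterotime}.
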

 In this paper, we will prove the following result for \eqref{time}:
\begin{thm}\label{heterotime}
Assume that $c(t)$ is an almost periodic function in $t\in\Real$ with $\lim\limits_{t\to+\infty}\frac{1}{t}\int_{0}^{t}c(s)ds>0$.	
Let $(u(t,x),h(t))$ be a bounded transition semi-wave of \eqref{time} which connects $u_{c}(t)$ and $0$.
Then $u(t,x)$ is a time almost periodic semi-wave.
\end{thm}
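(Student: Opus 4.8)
The plan is to identify the given bounded transition semi-wave with the unique time almost periodic semi-wave furnished by Theorem \ref{thmtime}; since any space translate of an almost periodic semi-wave is again an almost periodic semi-wave, this establishes the claim. First I would pass to the fixed-boundary frame, setting $w(t,x)=u(t,x+h(t))$, so that $w$ is a bounded positive entire solution of \eqref{fixedbd} on $\{x<0\}$ with $w(t,0)=0$, while the transition condition \eqref{defofTranW} reads $\lim_{x\to-\infty}|w(t,x)-u_{c}(t)|=0$ uniformly in $t\in\mathbb{R}$. Boundedness together with interior and boundary parabolic estimates yields uniform bounds on $w,w_{x},w_{xx},w_{t}$ up to $\{x=0\}$, a uniform positive lower bound for $-w_{x}(t,0)$, and uniform positivity of $w$ on each half-line $\{x\le-\varepsilon\}$; in particular $h'(t)=-\mu w_{x}(t,0)$ is bounded, so $(u,h)$ falls in the class covered by Theorem \ref{thm3.1}. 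A key observation is that if $(\phi,\zeta)$ is the almost periodic semi-wave of Theorem \ref{thmtime}, then all of its space translates $(\phi(t,\cdot-\xi),\zeta(t)+\xi)$ collapse, in the fixed-boundary frame, to one and the same profile $W(t,x):=\phi(t,x+\zeta(t))$, which is almost periodic in $t$ uniformly for $x\le0$. Thus proving $w\equiv W$ is exactly what is needed.

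Because $w(t,\cdot)$ and $W(t,\cdot)$ are both positive on $(-\infty,0)$, vanish linearly at $x=0$ with derivative bounded away from $0$, and share the limit $u_{c}(t)$ as $x\to-\infty$, they are mutually comparable in the part metric $\rho$ introduced before Theorem \ref{thmtime}, uniformly in $t$; hence $\rho(t):=\rho\big(w(t,\cdot),W(t,\cdot)\big)$ is finite and bounded on $\mathbb{R}$. The heart of the argument is then a contraction estimate exploiting the Fisher-KPP structure: the quotient $u(c(t)-u)/u=c(t)-u$ is strictly decreasing in $u>0$, which makes $\rho(t)$ nonincreasing along the flow and, in fact, strictly decreasing by a definite amount whenever $\rho(t)>0$. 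The uniformity of this strict decrease is obtained from compactness of the hull $H(c)=\overline{\{c(\cdot+s):s\in\mathbb{R}\}}$ and the strong maximum principle: were the strict decrease to fail at some hull limit, two distinct bounded positive solutions of the limiting problem with identical boundary behaviour would touch and thus coincide, a contradiction.

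Since $\rho$ is bounded, nonincreasing, and decreases by at least $\delta(\rho(t_{0}))>0$ over each unit time interval once $\rho(t_{0})>0$, running time backward forces $\rho(t_{0}-n)\ge \rho(t_{0})+n\,\delta(\rho(t_{0}))\to+\infty$, contradicting boundedness. Hence $\rho\equiv0$, i.e. $w\equiv W$. Unravelling the frames gives $u(t,x)=\phi(t,x-\xi^{*})$ and $h(t)=\zeta(t)+\xi^{*}$ for some $\xi^{*}\in\mathbb{R}$, with the equality of the free boundaries guaranteed by the strong maximum principle and Hopf's lemma applied to the Stefan condition $h'=-\mu u_{x}$; therefore $(u,h)$ is a time almost periodic semi-wave.

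The step I expect to be the main obstacle is the monotonicity and strict contraction of $\rho$. The transport term $-\mu w_{x}(t,0)\,w_{x}$ in \eqref{fixedbd} is nonlocal, and the coefficient $w_{x}(t,0)$ attached to $w$ differs from $W_{x}(t,0)$ attached to $W$, so $w$ and $W$ solve genuinely different equations and the part-metric comparison is not the textbook monotone-semiflow one. Making the contraction precise requires either tracking these boundary-derivative terms directly or, equivalently, carrying out the whole comparison in the free-boundary formulation \eqref{time} via its comparison principle on moving domains, where Hopf's lemma controls the mismatch of the boundary velocities. Establishing the uniform strict contraction across the compact hull, in harmony with the correct behaviour at the moving free boundary, is where the real work lies.
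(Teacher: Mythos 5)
Your proposal follows essentially the same route as the paper: pass to the fixed-boundary frame, compare $w(t,x)=u(t,x+h(t))$ with the profile $\Phi(t,x)=\phi(t,x+\zeta(t))$ of the unique almost periodic semi-wave via the part metric, and rule out $\rho>0$ by strict decrease of $\rho$ combined with compactness of the hull $H(c)$ along a sequence $t_{n}\to-\infty$. The only substantive differences are that the paper first establishes the pointwise ordering $u<u_{c}$ and hence $w\leq\Phi$ (so that its one-sided part metric is applicable), and it outsources the strict-contraction step you correctly flag as the main obstacle to \cite[Lemma 4.6]{LLS} rather than proving it.
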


For the space almost periodic case, we have the similar conclusion. First, $u_{t}=u_{xx}+u(a(x)-u)$ has a unique positive almost periodic solution $v_{a}$ (e.g. see \cite{L}).
Then we also can give the definition of space almost periodic semi-waves of \eqref{space}.
\begin{defn}\label{spaceTW}
	A positive entire solution $(u(t,x),h(t))$ of \eqref{space} is called an almost periodic semi-wave connecting $v_{a}(x)$ and $0$ if
	\begin{enumerate}
		\item[(1)] $u(t,x)$ can be written as $u(t,x)=q(h(t),x-h(t)),$ where $q(\tau,\xi)\in C^{2}(\mathbb{R}\times(-\infty,0]))$ is almost periodic in $\tau$ uniformly with respect to $\xi\leq0$, 
		\item[(2)] $h(\pm\infty)=\pm\infty$ and $g(\tau):=h^{\prime}(h^{-1}(\tau))$ is an almost periodic function,
		 i.e., $h^{\prime}(t)$ is an almost periodic function of $h(t)$.
		\item[(3)] $\lim\limits_{x\to-\infty}u(t,x+h(t))=v_{a}(x+h(t))$ uniformly in $t\in\Real$.
	\end{enumerate}	
\end{defn}
This definition is equivalent to that given in \cite[Definitions 1.1 and 1.2]{L}.

In \cite{L}, it is proved
\begin{thm}\label{thmtime}
(\cite[Theorem 1.1]{L}).
Assume $a$ is a positive almost periodic function. Then
there is a space almost periodic positive semi-wave solution $(\phi(t,x),\zeta(t))$
of \eqref{space} connecting $v_a$ and $0$.  Moreover, the time almost periodic positive semi-wave solution connecting $u_c$ and $0$ is unique up to the space translation.
\end{thm}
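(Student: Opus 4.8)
The statement combines two assertions about two different equations, so the plan is to treat them separately: first, the existence of a space almost periodic semi-wave of \eqref{space} connecting $v_a$ and $0$; second, the uniqueness, up to a space translation, of the time almost periodic semi-wave of \eqref{time} connecting $u_c$ and $0$. The existence part follows the approximation scheme used for the periodic and almost periodic space-heterogeneous KPP problems in \cite{L, DLiang, Zh}, whereas the uniqueness part is obtained by the part-metric method of \cite{LLS}. I will carry out each half with its own set of tools, since the two involve genuinely different mechanisms (a spatial skew-product flow versus a temporal recurrence argument).

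For the existence of the space almost periodic semi-wave, I would first recall that $v_a$ is the unique positive almost periodic solution of $u_{xx}+u(a(x)-u)=0$, and then construct the semi-wave by approximation. Writing the profile as $q(\tau,\xi)$ with phase $\tau=h(t)$ and moving coordinate $\xi=x-h(t)\le 0$ as in Definition \ref{spaceTW}, I would obtain $q$ as a limit of solutions of the associated boundary value problem on finite intervals $\xi\in[-R,0]$ as $R\to+\infty$, using a shifted copy of $v_a$ as a supersolution and a small exponential bump near the front as a subsolution; the free-boundary condition then fixes the speed through $g(\tau)=-\mu\, q_\xi(\tau,0)$. Lifting the construction to the skew-product flow over the minimal hull $H(a)$ of $a$ and invoking continuity together with uniqueness within the hull, I would upgrade recurrence to genuine almost periodicity of $q$ and of $g$ in $\tau$, thereby verifying the three requirements of Definition \ref{spaceTW}.

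For the uniqueness of the time almost periodic semi-wave, I would pass to the fixed-boundary formulation \eqref{fixedbd}: every time almost periodic semi-wave $(\phi_i,\zeta_i)$ of \eqref{time} connecting $u_c$ and $0$ induces, via $w_i(t,x)=\phi_i(t,x+\zeta_i(t))$, a positive almost periodic entire solution $w_i$ of \eqref{fixedbd} with $w_i(t,\cdot)\in X$ and $w_i(t,x)\to u_c(t)$ as $x\to-\infty$ uniformly in $t$. Given two such solutions, I would set $\rho(t)$ to be the part metric between $w_1(t,\cdot)$ and $w_2(t,\cdot)$, symmetrized over the two orderings, and show by the comparison principle and the strong maximum principle for \eqref{fixedbd} that $t\mapsto\rho(t)$ is non-increasing. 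Using the recurrence furnished by the almost periodicity of $c$ and of the two solutions, I would extract shifts $t_n\to+\infty$ along which $(w_1,w_2,c)$ converge to a limiting triple whose part metric is constant in $t$; constancy then forces, again by the strong maximum principle, proportionality $\tilde w_2=e^{\rho_\infty}\tilde w_1$, and the common limit $u_c$ at $x\to-\infty$ together with the boundary value $w(t,0)=0$ forces $e^{\rho_\infty}=1$, i.e.\ $\rho_\infty=0$. Propagating this back through the monotonicity of $\rho$ and the almost periodic structure yields $\rho\equiv0$, hence $w_1\equiv w_2$; since then $\zeta_1'=-\mu\,w_{1,x}(t,0)=-\mu\,w_{2,x}(t,0)=\zeta_2'$, the boundaries differ by a constant and the two semi-waves agree up to a space translation.

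The main obstacle in the existence part is to go beyond mere boundedness or recurrence of the approximating profiles and establish \emph{almost periodicity} of both $q(\cdot,\xi)$ and the speed $g$; this requires a uniqueness-and-continuity argument over the minimal hull $H(a)$ so that the corresponding section of the skew-product flow is single-valued and continuous in the base. In the uniqueness part, the delicate point is the monotonicity of the part metric under the \emph{nonlocal} transport term $-\mu\,w_x(t,0)\,w_x$ in \eqref{fixedbd}: one must verify that the comparison and strong maximum principles survive this boundary-feedback coefficient, and that both solutions genuinely lie in $X$, in particular that $w_x(t,0)<0$, so that $\rho(t)$ is well defined and finite. Controlling the behavior near the free boundary, where $w$ vanishes and the part metric is most sensitive, is where I expect the argument to be most technical.
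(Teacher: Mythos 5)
This statement is not proved in the paper at all: it is quoted verbatim as a known result, \cite[Theorem 1.1]{L}, exactly as the uniqueness of the \emph{time} almost periodic semi-wave is quoted earlier from \cite[Theorem 2.1]{LLS}. So there is no internal proof to compare against, and your proposal is in effect a reconstruction of the cited papers' arguments. Read that way, your existence half is in the right family: the finite-interval approximation with a shifted $v_a$ as supersolution, the lift to the skew-product flow over the hull $H(a)$, and the uniqueness-plus-continuity argument over the hull to upgrade recurrence to genuine almost periodicity is the mechanism of \cite{L} (and of \cite{DLiang} in the periodic case), and you correctly identify the speed coupling $g(\tau)=-\mu q_\xi(\tau,0)$ and the main obstacle (almost periodicity versus mere recurrence). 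Your time-uniqueness half is likewise the part-metric mechanism of \cite{LLS}, which this paper itself reuses in the proof of Theorem \ref{heterotime}.

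The genuine gap is \emph{which} uniqueness you proved. The second sentence of the statement is plainly a copy-paste misprint from the earlier theorem citing \cite{LLS}: in \cite[Theorem 1.1]{L} the uniqueness assertion concerns the \emph{space} almost periodic semi-wave of \eqref{space} connecting $v_a$ and $0$, not the time almost periodic semi-wave of \eqref{time}. By taking the garbled text literally, you sketched a result already stated two theorems earlier and left the intended claim unaddressed --- and the omission is not cosmetic, because your part-metric argument does not transfer. It relies on the fixed-boundary formulation \eqref{fixedbd} having coefficients depending on $t$ only, so that $t\mapsto\rho(w_1(t,\cdot),w_2(t,\cdot))$ is monotone. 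For \eqref{space} the fixed-boundary equation is $w_t=w_{xx}+h^{\prime}(t)w_x+w\big(a(x+h(t))-w\big)$: the heterogeneity enters through the spatial variable shifted by the a priori different front positions $h_1,h_2$ of the two candidate semi-waves, so the two comparands solve equations with \emph{different} coefficients and the monotonicity of the part metric fails as stated. The uniqueness in \cite{L} instead exploits the skew-product structure with the front position as phase variable (note that Definition \ref{spaceTW} parametrizes the profile as $q(\tau,\xi)$ with $\tau=h(t)$ and requires $h^{\prime}$ to be almost periodic as a function of $h(t)$), which is a genuinely different recurrence argument. A smaller remark on the half you did sketch: monotonicity of $\rho$ together with $\rho(t)\to0$ as $t\to+\infty$ alone does not yield $\rho\equiv0$; the load-bearing step is either the almost periodicity of $t\mapsto\rho(t)$ (a nonincreasing almost periodic function is constant) or, as in the paper's proof of Theorem \ref{heterotime}, passing to limits along $t_n\to-\infty$ and contradicting the strict-decrease lemma \cite[Lemma 4.6]{LLS}; you gesture at this but should make it explicit.
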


Our result for \eqref{space} is

\begin{thm}\label{heterospace}
Assume that $a(x)$ is a positive almost periodic function. Let $(u(t,x),h(t))$ be a bounded transition semi-wave of \eqref{space} which connects $v_{a}(x)$ and $0$.
Then $u(t,x)$ is a spatial almost periodic semi-wave.
\end{thm}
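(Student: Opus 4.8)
The plan is to mirror the argument for the time almost periodic case (Theorem~\ref{heterotime}), adapting every step to the spatial setting, where the natural clock variable is $\tau=h(t)$ rather than $t$. First I would straighten the free boundary by setting $w(t,\xi)=u(t,\xi+h(t))$ for $\xi\le0$; a direct computation turns \eqref{space} into a fixed-boundary KPP problem on $\{\xi<0\}$, namely $w_t=w_{\xi\xi}-\mu w_\xi(t,0)\,w_\xi+w\big(a(\xi+h(t))-w\big)$ with $w(t,0)=0$, which is the spatial analogue of \eqref{fixedbd}. Since $u>0$ in $\Omega_h$ and $u(t,h(t))=0$, the Hopf lemma gives $w_\xi(t,0)<0$, hence $h'(t)=-\mu w_\xi(t,0)>0$, so that $h$ is strictly increasing. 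Boundedness of the wave together with the properties of transition semi-waves established in Section~3 (Theorem~\ref{thm3.1}) should give $h(\pm\infty)=\pm\infty$, so that $h^{-1}$ exists on $\Real$ and one may legitimately pass to the parametrization $\tau=h(t)$ and define $g(\tau)=h'(h^{-1}(\tau))$ as required in Definition~\ref{spaceTW}.

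Next I would use comparison to sandwich the wave. By \cite{L} there is a spatial almost periodic semi-wave $(\phi,\zeta)$ connecting $v_a$ and $0$, and it is itself a bounded transition semi-wave, hence lies in the comparison class of Theorem~\ref{thm3.1}. The connecting condition \eqref{defofTranW}, which here reads $\lim_{x\to-\infty}|u(t,x+h(t))-v_a(x+h(t))|=0$ uniformly in $t$, tells me that both $u$ and $\phi$ relax to the same spatial profile $v_a$ far behind the front; combining this with boundedness and the comparison-up-to-shift statement of Theorem~\ref{thm3.1}, I expect to trap the straightened profile $w$ between two spatial translates of the straightened profile of $\phi$.

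The heart of the proof is the part-metric contraction. Recalling the metric $\rho(w_1,w_2)=\inf\{\ln\alpha:\alpha\ge1,\ w_2\le\alpha w_1\}$ on $X$, I would exploit that the reaction $u\mapsto u(a(x)-u)$ is strictly sub-homogeneous, i.e. $u\mapsto (a(x)-u)$ is decreasing, which forces $\rho$ between two ordered solutions of the fixed-boundary problem to be non-increasing along the flow. Monitoring $\rho$ between $w$ and the optimal trapping translate of $\phi$, I would show this non-increasing quantity must tend to $0$: were it to converge to a positive limit, the strong maximum principle applied to the strict sub-homogeneity would produce a definite decrease of $\rho$ over any unit time interval on which the two solutions are not proportional, contradicting convergence to a positive value. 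Hence $w$ coincides with a translate of $\phi$, so $u$ is a spatial almost periodic semi-wave, and the uniqueness up to translation in \cite{L} fixes the structure $u(t,x)=q(h(t),x-h(t))$ with $q$ almost periodic in $\tau=h(t)$ and $g$ almost periodic.

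The step I expect to be the main obstacle is carrying the sandwiching and the part-metric contraction through the $\tau=h(t)$ reparametrization. Unlike the time case, the straightened equation retains the spatial dependence $a(\xi+h(t))$, so the profile is genuinely coupled to the unknown boundary $h$; one must check that the part-metric monotonicity survives this coupling and that the translates used for trapping stay within the almost-periodic hull of $a$, so that the limiting proportionality constant is exactly $1$ and the inherited almost periodicity is with respect to the correct variable $\tau=h(t)$.
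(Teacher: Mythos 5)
Your plan to transplant the part-metric argument from the time almost periodic case runs into a gap that you yourself flag but do not resolve, and it is fatal as stated. In the time case the straightened profiles $w(t,x)=u(t,x+h(t))$ and $\Phi(t,x)=\phi(t,x+\zeta(t))$ satisfy the \emph{same} fixed-boundary equation \eqref{fixedbd}, because the reaction $w(c(t)-w)$ does not see the front position; this is precisely what makes $\rho(w(t,\cdot),\Phi(t,\cdot))$ non-increasing. In the spatial case the straightened equations carry the coefficients $a(\xi+h(t))$ and $a(\xi+\zeta(t))$ respectively, and $h\neq\zeta$ in general, so $w$ and $\Phi$ solve \emph{different} equations and the part-metric monotonicity has no reason to hold between them. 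The same problem reappears in the limit $t\to-\infty$: the translates $a(\cdot+h(t_n))$ and $a(\cdot+\zeta(t_n))$ converge to different elements of the hull of $a$ unless $h(t_n)-\zeta(t_n)$ converges, so the limiting solutions again solve different equations and the strict-decrease contradiction does not close. A second gap: you propose to trap $w$ between two spatial translates of $\phi$, but spatial translates of $\phi$ are not solutions of \eqref{space} since $a$ is genuinely $x$-dependent, and Theorem \ref{thm3.1} cannot be invoked here anyway because it presupposes that both waves have global mean speeds with bounded deviation from a linear function, which you have not established.

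The paper does none of this. It reduces the theorem to item (iii) of \cite[Theorem 1.1]{L} (a classification of bounded entire solutions of the free boundary problem that stay uniformly positive at bounded distance behind the front and below $v_a$), and only verifies the two hypotheses: the uniform lower bound $u(t,x)\geq\varepsilon_{0}$ for $x\leq h(t)-\delta$, which follows directly from the defining limit \eqref{defofTranW} with $\varepsilon_{0}=\tfrac{1}{2}\inf_{x\in\Real}v_{a}(x)$, and the upper bound $u\leq v_{a}$, which follows from \cite[Proposition 1.1]{L}. If you want a self-contained argument, you would essentially have to reprove that classification, which in \cite{L} is carried out with machinery (the hull of $a$ and a skew-product formulation clocked by the front position $\tau=h(t)$) designed exactly to handle the coupling you correctly identified as the main obstacle.
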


In fact, one can prove the Theorem \ref{heterotime} (resp. \ref{heterospace}) holds for more general KPP-Fisher type reaction terms, which are independent of $x$ (resp. $t$).

\section{Properties of transition semi-wave}

\subsection{Some useful known results}

In this subsection, we present some useful known results which we will need later.

\begin{lemma}\label{lem3.0}
	(\cite[Lemma 2.1]{DLo}).Suppose that $T\in(0,+\infty)$.
	Assume that $f(t,x,u)\equiv f(u)\in C^{1}$ and $f(0)=0$. Let $(u,g_{-},g_{+})$ be a positive solution of
	\begin{equation}\label{4.1}
	\left\{
	\begin{aligned}
	u_{t}=u_{xx}+f(u),\ \ &t>0,\ g_{-}(t)<x<g_{+}(t),\\
	u(t,g_{\pm}(t))=0,\ \ &t>0,\\
	g_{\pm}^{\prime}(t)=-\mu u_{x}(t,g_{\pm}(t)),\ \ &t>0,\\
	\end{aligned}
	\right.
	\end{equation}
	with initial value $ \pm g_{\pm}(0)= g_{0}>0, u(0,x)=u_{0}(x)\ \text{for}\ x\in(-g_{0},g_{0})$.
    Let $ \overline{g}_{\pm}\in C^{1}[0,T],  $ $D_{T}=\{(t,x)\in \Real^{2}: 0<t\leq T, \overline{g}_{-}(t)<x<\overline{g}_{+}(t)\},$
 $\overline{u}\in C(\overline{D_{T}})\cap C^{1,2}(D_{T})$ be positive in $D_T$
		and
	\begin{equation*}
	\left\{
	\begin{aligned}
	\overline{u}_{t}\geq\overline{u}_{xx}+f(\overline{u}),\ \ &0<t\leq T,\ \overline{g}_{-}(t)<x<\overline{g}_{+}(t),\\
	\overline{u}(t,\overline{g}_{\pm}(t))=0, \ \ &0<t\leq T,\\
	\pm \overline{g}_{\pm}^{\prime}(t)\geq\mp\mu \overline{u}_{x}(t,\overline{g}_{\pm}(t)),\ \ &0<t\leq T.\\
	\end{aligned}
	\right.
	\end{equation*}
	If $[-g_{0},g_{0}]\subset[\overline{g}_{-}(0),\overline{g}_{+}(0)]$ and $u_{0}(x)\leq\overline{u}(0,x)$ for $x\in[-g_{0},g_{0}]$, then
	$$\overline{g}_{-}(t)\leq g_{-}(t),{g}_{+}(t)\leq \overline{g}_{+}(t)\ \text{for}\ t\in(0,T],$$
	$$u(t,x)\leq\overline{u}(t,x)\ \text{for}\ t\in(0,T]\ \text{and}\ x\in({g}_{-}(t),{g}_{+}(t)).$$
\end{lemma}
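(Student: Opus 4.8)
The plan is to establish this Stefan-type comparison principle by following the first time at which the interfaces of $u$ and $\bar u$ could meet, and ruling out such a meeting by a Hopf-type boundary argument. First I would reduce to the case of \emph{strict} initial containment $\bar g_-(0)<g_-(0)<g_+(0)<\bar g_+(0)$ with $u_0<\bar u(0,\cdot)$ on the open interval: this is arranged by slightly shrinking the initial datum and initial domain of the genuine solution (e.g. replacing $u_0$ by $(1-\delta)u_0$ on a marginally smaller interval) and invoking continuous dependence of the free boundary problem on its initial data, after which letting $\delta\to0$ recovers the stated non-strict hypotheses and non-strict conclusions. Granting strict containment, set
\[
t_0=\sup\{\, t\in(0,T] : \bar g_-(s)<g_-(s)\ \text{and}\ g_+(s)<\bar g_+(s)\ \text{for all}\ s\in(0,t]\,\},
\]
which is positive by continuity; on $(0,t_0]$ the domain $\Omega_u=\{(t,x):g_-(t)<x<g_+(t)\}$ of $u$ sits strictly inside (in $x$) the domain of $\bar u$.

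Next I would compare the two solutions on $\Omega_u$. Writing $w=\bar u-u$ and using $f\in C^1$, the mean value theorem produces a bounded coefficient $c(t,x)$ with
\[
w_t-w_{xx}-c(t,x)\,w\ \geq\ 0\qquad\text{in}\ \Omega_u\cap\{0<t\leq t_0\}.
\]
On the parabolic boundary one has $w(0,\cdot)=\bar u(0,\cdot)-u_0\geq0$, while on the lateral curves $x=g_\pm(t)$ one has $u=0$ and $\bar u>0$, since $g_\pm(t)$ lies strictly interior to the domain of $\bar u$ for $t<t_0$; hence $w>0$ there. The maximum principle for $\partial_t-\partial_{xx}-c$ (a bounded zeroth-order term, handled by the usual exponential substitution) then gives $w\geq0$, i.e. $u\leq\bar u$ on $\overline{\Omega_u}\cap\{t\leq t_0\}$, and the strong maximum principle gives $w>0$ in the interior, so $u$ cannot touch $\bar u$ from below at an interior point before $t_0$.

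The heart of the matter is to show $t_0=T$. Suppose $t_0<T$; then at $t_0$ one interface pair meets, say $x_0:=g_+(t_0)=\bar g_+(t_0)$ (the left case being symmetric). At the common point $P=(t_0,x_0)$ we have $w(P)=\bar u(P)-u(P)=0$, a minimum of $w$, with $w\geq0$ on the one-sided neighbourhood $\{x<g_+(t)\}$. Hopf's boundary lemma on the $C^1$ curve $x=g_+(t)$ yields $w_x(P)<0$, i.e. $\bar u_x(P)<u_x(P)$. Combining this with the interface relations $g_+'(t_0)=-\mu u_x(P)$ and $\bar g_+'(t_0)\geq-\mu\bar u_x(P)$, and multiplying the inequality $\bar u_x(P)<u_x(P)$ by $-\mu<0$, gives $\bar g_+'(t_0)>g_+'(t_0)$. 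But $\bar g_+-g_+>0$ on $(0,t_0)$ with $(\bar g_+-g_+)(t_0)=0$ forces $(\bar g_+-g_+)'(t_0)\leq0$, i.e. $\bar g_+'(t_0)\leq g_+'(t_0)$, a contradiction. Thus the interfaces never meet, $t_0=T$, and both conclusions hold on $(0,T]$; removing the initial perturbation completes the argument.

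The main obstacle I anticipate is the rigorous application of Hopf's lemma directly at the moving free boundary: this needs sufficient regularity of $u$, $\bar u$ and of the $C^1$ interfaces up to the touching point, and a careful justification that the relevant spatial outward-normal derivative comparison at $P$ is valid. A secondary technical point is the reduction to strict initial containment, which must be set up so that the supersolution inequality and the one-sided interface inequalities $\pm\bar g_\pm'\geq\mp\mu\bar u_x$ are preserved and survive the limit $\delta\to0$.
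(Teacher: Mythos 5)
Your argument is correct and is essentially the standard proof of this comparison principle: the paper itself gives no proof but quotes the result from \cite[Lemma 2.1]{DLo}, and the proof there (going back to Du--Lin) proceeds exactly as you describe, by perturbing to strict initial containment, tracking the first touching time of the free boundaries, and combining the strong maximum principle with Hopf's lemma and the Stefan conditions to rule out contact. The two technical points you flag (Hopf's lemma at the $C^1$ moving boundary and the stability of the hypotheses under the $\delta$-perturbation) are handled in the cited reference in just the way you anticipate.
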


\begin{lemma}\label{lem4.1}
(\cite[Lemma 2.2]{DLo}). Suppose that $T\in(0,+\infty)$.
	Assume that $f(t,x,u)\equiv f(u)\in C^{1}$ and $f(0)=0$. Let $(u,g_{-},g_{+})$ be given as in Lemma \ref{lem3.0}.
Moreover, let $\overline{g}_{\pm}\in C^{1}[0,T], D_{T}=\{(t,x)\in \Real^{2}: 0<t\leq T, \overline{g}_{-}<x<\overline{g}_{+}\}, 
\overline{u}\in C(\overline{D_{T}})\cap C^{1,2}(D_{T})$ be positive in $D_T$ and
\begin{equation*}
\left\{
   \begin{aligned}
  \overline{u}_{t}\geq\overline{u}_{xx}+f(\overline{u}),\ \ &0<t\leq T,\ \overline{g}_{-}(t)<x<\overline{g}_{+}(t),\\
   \overline{u}(t,\overline{g}_{-}(t))\geq u(t,\overline{g}_{-}(t)),\ \ &0<t\leq T,\\
   \overline{u}(t,\overline{g}_{+}(t))=0,\ \ &0<t\leq T,\\
   \overline{g}_{+}^{\prime}(t)\geq-\mu \overline{u}_{x}(t,\overline{g}_{+}(t)),\ \ &0<t\leq T.\\
   \end{aligned}
   \right.
\end{equation*}
If $\overline{g}_{-}(t)\geq g_{-}(t)$ for $t\in[0,T]$, $g_{0}\leq \overline{g}_{+}(0)$, and
$u_{0}(x)\leq \overline{u}(0,x)$ for $x\in[\overline{g}_{-}(0),g_{0}]$, then
$$g_{+}(t)\leq \overline{g}_{+}(t)\ \text{for}\ t\in(0,T],$$
$$u(t,x)\leq \overline{u}(t,x)\ \text{for}\ t\in(0,T]\ \text{and}\ x\in(\overline{g}_{-}(t),{g}_{+}(t)).$$
\end{lemma}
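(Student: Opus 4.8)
The plan is to prove Lemma \ref{lem4.1} by a contradiction argument based on the parabolic maximum principle together with the Hopf boundary lemma at the free boundary $x=g_+(t)$; the two-sided Lemma \ref{lem3.0} is handled by the same scheme applied at both free boundaries simultaneously, so I focus on the one-sided statement.

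First I would work on the overlap region. Introduce $w=\bar u-u$ and note that, as long as $g_+(t)\le\bar g_+(t)$, both functions are defined on $\Sigma(t)=(\bar g_-(t),g_+(t))$. Since $f\in C^1$ and $u,\bar u$ are bounded, the mean value theorem gives $f(\bar u)-f(u)=c(t,x)w$ with $c(t,x)=\int_0^1 f'(su+(1-s)\bar u)\,ds$ bounded, so $w$ satisfies the linear parabolic inequality $w_t-w_{xx}-c(t,x)w\ge0$ in $\Sigma$. The three hypotheses say precisely that $w\ge0$ on the parabolic boundary of $\Sigma$: at $t=0$ by $u_0\le\bar u(0,\cdot)$ on $[\bar g_-(0),g_0]$; at the left boundary $x=\bar g_-(t)$ by $\bar u(t,\bar g_-)\ge u(t,\bar g_-)$; and at the right boundary $x=g_+(t)$ because $u(t,g_+)=0$ while $\bar u(t,g_+)\ge0$ (the point $g_+(t)$ lies in the closed positivity domain of $\bar u$ once $g_+\le\bar g_+$). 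The weak maximum principle, after the standard exponential rescaling to absorb the sign of $c$, then yields $w\ge0$, i.e. $u\le\bar u$ on $\Sigma(t)$ for every $t$ at which $g_+\le\bar g_+$ still holds.

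The heart of the matter is to show the right free boundaries never cross. Set $\tau=\inf\{t\in(0,T]:g_+(t)=\bar g_+(t)\}$ and suppose $\tau\le T$. By $g_0\le\bar g_+(0)$ and continuity one has $g_+(t)<\bar g_+(t)$ for $t\in[0,\tau)$ and $g_+(\tau)=\bar g_+(\tau)$ (the degenerate initial case $g_0=\bar g_+(0)$ is treated separately by a short-time analysis). On $[0,\tau)$ the previous step applies, so $w\ge0$ on $\Sigma$, and at the touching point $w(\tau,g_+(\tau))=\bar u(\tau,\bar g_+(\tau))-u(\tau,g_+(\tau))=0$. By the strong maximum principle either $w>0$ in the interior up to $\tau$, or $w\equiv0$; the latter would force $u\equiv\bar u$ on an open set and hence $g_+\equiv\bar g_+$ there, contradicting $g_+<\bar g_+$ on $[0,\tau)$. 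Thus $w>0$ in the interior, and Hopf's lemma at the right endpoint gives $w_x(\tau,g_+(\tau))<0$, that is $\bar u_x(\tau,\bar g_+(\tau))<u_x(\tau,g_+(\tau))$. Combining this with the free boundary conditions and $\mu>0$,
\[
\bar g_+'(\tau)\ge-\mu\,\bar u_x(\tau,\bar g_+(\tau))>-\mu\,u_x(\tau,g_+(\tau))=g_+'(\tau).
\]
On the other hand $\bar g_+-g_+$ is positive on $[0,\tau)$ and vanishes at $\tau$, so $\bar g_+'(\tau)-g_+'(\tau)\le0$, contradicting the strict inequality. Hence $g_+(t)\le\bar g_+(t)$ on $(0,T]$, and the first step then gives $u\le\bar u$ on $(\bar g_-(t),g_+(t))$.

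The main obstacle is this last step: one must secure the strict positivity of $w$ in the interior so that Hopf's lemma is legitimately applicable at the moving endpoint $(\tau,g_+(\tau))$, which requires the interior ball condition guaranteed by the $C^1$ regularity of the free boundaries, and one must carefully exclude the degenerate alternatives $w\equiv0$ and $g_0=\bar g_+(0)$. Everything else is a routine application of the comparison principle for linear parabolic equations.
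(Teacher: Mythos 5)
The paper does not prove this lemma at all --- it is quoted verbatim from \cite[Lemma 2.2]{DLo} --- and your argument (linearize $w=\overline{u}-u$ on the overlap region, apply the weak maximum principle up to the first contact time $\tau$ of the right free boundaries, then derive $\overline{g}_+'(\tau)>g_+'(\tau)$ from Hopf's lemma against the sign forced by first contact) is precisely the standard proof used in that reference and in Du--Lin's original work, so it is essentially the same approach and is correct. The one loose end is the degenerate case $g_0=\overline{g}_+(0)$, which you flag but do not resolve; the standard fix is not a ``short-time analysis'' but an $\varepsilon$-perturbation of the supersolution (e.g.\ replacing $\overline{u}$, $\overline{g}_+$ by slightly enlarged versions to obtain strict separation at $t=0$, then letting $\varepsilon\to0$).
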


\begin{lemma}\label{lem4.2}
(\cite[Proposition 2.14]{DDL}). Suppose that  $T\in(0,+\infty)$.
	Assume that $f(t,x,u)\equiv f(u)\in C^{1}$, $f(0)=0$, and $f(u)\leq Ku$ for $u\geq0$ with some $K>0$. Let $(u,g)$ be a solution of
\begin{equation}\label{4.2}
\left\{
\begin{aligned}
u_{t}=u_{xx}+f(u),\ \ &0<t\leq T,\ x<g(t),\\
u(t,g(t))=0,\ \ \ &0<t\leq T,\\
g^{\prime}(t)=-\mu u_{x}(t,g(t)),\ \  &0<t\leq T,\\
\end{aligned}
\right.
\end{equation}
with initial value $g(0)=g_{0}, u(0,x)=u_{0}(x)\ \text{for}\ x\in(-\infty,g_{0})$.
Let $\overline{g}\in C([0,T])\cap C^{1}((0,T]), D_{T}=\{(t,x)\in \Real^{2}: 0<t\leq T, x\leq\overline{g}\}, \overline{u}\in C(\overline{D_{T}})\cap C^{1,2}(D_{T})$
 be positive and
\begin{equation*}
\left\{
   \begin{aligned}
  \overline{u}_{t}\geq\overline{u}_{xx}+f(\overline{u}),\ \ &0<t\leq T,\ x<\overline{g}(t),\\
   \overline{u}(t,\overline{g}(t))=0,\ \ &0<t\leq T,\\
  \overline{g}^{\prime}(t)\geq-\mu \overline{u}_{x}(t,\overline{g}(t)),\ \ &0<t\leq T.\\
   \end{aligned}
   \right.
\end{equation*}
If $g(0)\leq \overline{g}(0)$ and $u(0,x)\geq \overline{u}_{0}(x)$ for $x\in(-\infty,g_{0}]$, then
$$g(t)\leq \overline{g}(t)\ \text{for}\ t\in(0,T],$$
$$u(t,x)\leq \overline{u}(t,x)\ \text{for}\ t\in(0,T]\ \text{and}\ x\in(-\infty,{g}(t)).$$
\end{lemma}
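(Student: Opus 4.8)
The plan is to prove the two conclusions---the ordering of the free boundaries $g(t)\le\overline{g}(t)$ and the interior comparison $u\le\overline{u}$---simultaneously, since they are coupled through the Stefan-type condition on the moving boundary. (For the conclusion $u\le\overline{u}$ one needs the \emph{initial} ordering $u_0(x)\le\overline{u}(0,x)$, and I read the hypothesis on the initial data in that direction.) Set $w=\overline{u}-u$ on the overlap region $\{(t,x):0<t\le T,\ x<g(t)\}$. Wherever both functions are defined, subtracting $\overline{u}_t\ge\overline{u}_{xx}+f(\overline{u})$ from $u_t=u_{xx}+f(u)$ and writing $f(\overline{u})-f(u)=c(t,x)\,w$ with $c(t,x)=\int_0^1 f'\!\big(u+s(\overline{u}-u)\big)\,ds$ gives the linear parabolic differential inequality $w_t-w_{xx}-c(t,x)\,w\ge 0$, where $c$ is bounded because $f\in C^1$ and $u,\overline{u}$ are locally bounded.

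First I would record the sign of $w$ on the parabolic boundary of $\{x<g(t)\}$ as long as the boundaries stay ordered. On $\{t=0\}$ we have $w\ge 0$ by hypothesis; on the right boundary $x=g(t)$ we have $u=0$ while $\overline{u}\ge 0$ (because $g(t)\le\overline{g}(t)$ places $g(t)$ inside the positivity domain of $\overline{u}$), so $w\ge 0$ there as well. The only missing piece is the behaviour as $x\to-\infty$, and this is exactly where the growth hypothesis $f(u)\le Ku$ enters: comparing $u$ with the spatially homogeneous supersolution solving $U'=KU$ yields an a priori exponential-in-time upper bound on $u$, which justifies a Phragm\'en--Lindel\"of / unbounded-domain maximum principle for $w$ on the half-line. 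Applying it gives $w\ge 0$, i.e. $u\le\overline{u}$, on $\{x<g(t)\}$ over any time interval on which $g\le\overline{g}$.

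To close the coupling I would argue by contradiction on the boundary ordering. Suppose $g(t)>\overline{g}(t)$ for some $t$ and let $t^*=\inf\{t\in(0,T]:g(t)>\overline{g}(t)\}$; by continuity and $g(0)\le\overline{g}(0)$ we have $g\le\overline{g}$ on $[0,t^*]$ and $g(t^*)=\overline{g}(t^*)=:\xi$. By the previous step $w\ge 0$ up to $t^*$, and at the touching point $w(t^*,\xi)=\overline{u}(t^*,\xi)-u(t^*,\xi)=0-0=0$. Unless $w\equiv 0$ (a case in which $g'=-\mu u_x=-\mu\overline{u}_x\le\overline{g}'$ already forces $g\le\overline{g}$), the strong maximum principle gives $w>0$ in the interior, so Hopf's lemma applied at the right boundary point $(t^*,\xi)$, where the outward normal is the $+x$ direction, yields $w_x(t^*,\xi)<0$, i.e. $\overline{u}_x(t^*,\xi)<u_x(t^*,\xi)$. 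Feeding this into the boundary conditions gives $\overline{g}'(t^*)\ge-\mu\overline{u}_x(t^*,\xi)>-\mu u_x(t^*,\xi)=g'(t^*)$, so $(\overline{g}-g)'(t^*)>0$; hence $\overline{g}-g$ increases strictly through its zero at $t^*$ and stays positive just afterwards, contradicting the definition of $t^*$. Therefore $g\le\overline{g}$ on all of $(0,T]$, and the interior comparison $u\le\overline{u}$ then holds throughout $(0,T]$ by the maximum principle step.

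The main obstacle is the maximum principle on the unbounded spatial domain: without the growth control $f(u)\le Ku$ one cannot exclude $w$ turning negative through its behaviour at $x=-\infty$, so the heart of the argument is converting that hypothesis into the exponential a priori bound underlying a Phragm\'en--Lindel\"of estimate. Two secondary technical points need care: applying Hopf's lemma at a free (moving, merely $C^1$) boundary requires verifying the interior-sphere / parabolic-boundary-point regularity there, and the possibly degenerate initial configuration $g(0)=\overline{g}(0)$ must be handled, most cleanly by first proving the statement for the strictly expanded supersolution with boundary $\overline{g}+\varepsilon$ and then letting $\varepsilon\downarrow 0$.
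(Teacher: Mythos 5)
The paper gives no proof of this lemma: it is quoted verbatim from \cite[Proposition 2.14]{DDL} (the hypothesis ``$u(0,x)\geq \overline{u}_{0}(x)$'' is indeed a typo for ``$\leq$'', as you inferred). Your argument is the standard first-touching-time proof of such free-boundary comparison principles and is essentially the one used in the cited source and its ancestors (Du--Lin): linearize $w=\overline{u}-u$, propagate $w\ge 0$ on $\{x<g(t)\}$ while the boundaries remain ordered, and at the first contact time $t^*$ use the strong maximum principle and Hopf's lemma to get $\overline{g}'(t^*)\ge-\mu\overline{u}_x(t^*,\xi)>-\mu u_x(t^*,\xi)=g'(t^*)$, contradicting the definition of $t^*$. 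This is correct in outline, and you rightly identify the two places needing care. Two remarks on those. First, in the degenerate case $w\equiv 0$ your parenthetical only yields $(\overline{g}-g)'(t^*)\ge 0$, which does not by itself exclude $g>\overline{g}$ immediately after $t^*$; so the $\varepsilon$-regularization is genuinely needed, not merely cosmetic. Second, the specific regularization you propose (replacing $\overline{g}$ by $\overline{g}+\varepsilon$) does not work as stated, because translating the supersolution's boundary without modifying $\overline{u}$ destroys the supersolution property; the standard device (as in Du--Lin, Lemma 3.5) instead perturbs the \emph{solution} side, solving the free boundary problem with initial front $g_0(1-\varepsilon)$, initial datum slightly below $u_0$, and coefficient $\mu(1-\varepsilon)$, so that all inequalities at the contact time become strict, and then letting $\varepsilon\downarrow 0$ via continuous dependence. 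With that substitution your proof closes, and it coincides with the approach of the reference the paper relies on.
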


\newtheorem{rem}{Remark}[section]
\begin{rem}\label{re4.1}
The triple $(\overline{u},\overline{g}_{-},\overline{g}_{+})$ in Lemma \ref{lem4.1} (resp. the
pair $(\overline{u}, \overline{g})$ in Lemma \ref{lem4.2}) is called an upper solution of \eqref{4.1}
(resp. \eqref{4.2}). We can define a lower solution through replacing the signs $\geq$ by signs $\leq$ .
Moreover, the corresponding comparison results still hold for lower solutions.
\end{rem}

\begin{rem}\label{re4.2}
A simple corollary of Lemma \ref{lem4.2} is that the solution $u(t,x)$ of \eqref{4.2} is decreasing in $x$ if $u_{0}$ is decreasing in $x$.
\end{rem}

\subsection{Properties of transition semi-wave}
In this subsection, we will always regard $p$ as a positive entire solution of $u_{t}=u_{xx}+f(t,x,u)$
and will also prove some basic properties of the transition semi-wave under some suitable assumptions. 

\newtheorem{prop}{Proposition}[section]
\begin{prop}\label{prop_prioriestimate}
Assume that $(u(t,x),h(t))$ is a positive, bounded entire solution of \eqref{1.1}. Then
\begin{equation}\label{prioriestimate}
\|w\|_{C^{1+\alpha/2,2+\alpha}(\mathbb{R}\times(-\infty,0])}+\|h^{\prime}\|_{C^{\alpha/2}(\Real)}\leq C,
\end{equation}
where $w(t,x)=u(t,x+h(t))$ and $C$ is a positive constant depending on $f, \|u\|_{\infty},$ and $L$.

In addition, suppose $\inf\limits_{(t,x)\in\Omega_{h}}p(t,x)>0$.  If $(u(t,x),h(t))$ is a transition semi-wave which connects $p$ and $0$, then
$\inf\limits_{t\in \Real }h^{\prime}(t)>0$.
\end{prop}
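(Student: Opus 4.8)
The statement has two parts. The first is a standard parabolic regularity (Schauder) estimate for the straightened solution $w(t,x)=u(t,x+h(t))$ on the fixed half-line $x\le 0$, together with a Hölder bound on $h'$; the second is the genuinely new claim, namely the uniform positivity $\inf_{t\in\mathbb{R}}h'(t)>0$ for a transition semi-wave connecting a uniformly positive $p$ to $0$.

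\emph{The regularity estimate.} The plan is to change variables to straighten the free boundary. Writing $w(t,x)=u(t,x+h(t))$, the function $w$ satisfies, on the fixed domain $\mathbb{R}\times(-\infty,0]$, the equation $w_t = w_{xx} + h'(t)\,w_x + f(t,x+h(t),w)$ with the Dirichlet condition $w(t,0)=0$ and $h'(t)=-\mu\,w_x(t,0)$. Since $(u,h)$ is bounded, $\|w\|_\infty=\|u\|_\infty<\infty$; by the structural hypotheses on $f$ (local $C^{\alpha/2,\alpha}$ regularity in $(t,x)$ and boundedness of $\partial_u f$ on bounded sets), the inhomogeneous term is bounded in $C^{\alpha/2,\alpha}$ on every parabolic cylinder once one knows $w$ and $h'$ are bounded. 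I would first get an $L^\infty$ bound on $h'=-\mu w_x(\cdot,0)$ from interior–boundary gradient estimates for bounded solutions of uniformly parabolic equations (the coefficient $h'$ of $w_x$ is not yet known to be bounded, so I would bootstrap: use $L^p$/Calderón–Zygmund estimates to get $w\in W^{1,2}_p$ locally, hence $w_x$ bounded up to the boundary, hence $h'\in L^\infty$). Once $h'$ is bounded, the equation is uniformly parabolic with bounded $C^{\alpha/2,\alpha}$ data, and boundary Schauder theory gives $\|w\|_{C^{1+\alpha/2,2+\alpha}}\le C$; then $h'(t)=-\mu w_x(t,0)$ inherits the $C^{\alpha/2}$ bound from the trace of $w_x$. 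The constant $C$ depends only on $f$, $\|u\|_\infty$, and $L$ (the period/regularity scale), as claimed. This part is routine; I do not expect obstacles beyond careful bookkeeping of the bootstrap.

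\emph{Uniform positivity of $h'$.} This is where the transition-semi-wave hypothesis enters, and it is the main difficulty. The idea is that $h'(t)=-\mu u_x(t,h(t))$ measures the slope of $u$ at the free boundary, and the connection condition $\lim_{x\to-\infty}|u(t,x+h(t))-p(t,x+h(t))|=0$ uniformly in $t$, combined with $\inf p>0$, forces $w(t,x)$ to be bounded below by a fixed positive quantity for $x$ deeply negative, uniformly in $t$. I would argue by contradiction: if $\inf_t h'(t)=0$, pick a sequence $t_n$ with $h'(t_n)=-\mu w_x(t_n,0)\to 0$. By the uniform $C^{2+\alpha}$ bound from part one, the translates $w(t+t_n,x)$ are precompact in $C^{1,2}_{loc}$; extract a limit $w_\infty$ solving a limiting equation with $w_\infty(t,0)=0$ and $\partial_x w_\infty(0,0)=0$. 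Since $w_\infty\ge 0$ vanishes together with its first $x$-derivative on the boundary, the Hopf lemma (boundary-point maximum principle for the parabolic operator) is violated unless $w_\infty\equiv 0$ near the boundary, which would propagate to force $w_\infty\equiv 0$, contradicting the lower bound $w_\infty\ge \inf p>0$ inherited from the connection condition in the limit. Thus $\inf_t h'>0$.

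The crux — and the step I expect to require the most care — is establishing the \emph{uniform} lower bound on $w$ away from the boundary that survives the limiting procedure: one must convert the $x\to-\infty$ connection statement into a statement at a \emph{fixed finite} depth $x=-R$, valid uniformly in $t$, so that the limit solution $w_\infty$ is bounded below by a positive constant on $\{x=-R\}$. Here I would use the uniform connection limit to choose $R$ so large that $w(t,-R)\ge \tfrac12\inf_{(t,x)\in\Omega_h}p(t,x)>0$ for all $t$, and then run the Hopf/maximum-principle contradiction on the slab $-R\le x\le 0$. The boundedness of $p$ away from zero is exactly what makes this fixed-depth lower bound available; without it the slope at the boundary could degenerate.
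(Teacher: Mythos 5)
Your second part (uniform positivity of $h'$) is exactly the paper's argument: translate by $t_n$ with $h'(t_n)\to0$, use the a priori bounds to extract a limiting solution $(u_\infty,h_\infty)$ with $(u_\infty)_x(0,0)=0$, use the uniform connection condition to pin down a fixed depth $B$ with $u(t,h(t)-B)\ge\tfrac12\inf p>0$, pass this to the limit, and contradict Hopf's lemma after the strong maximum principle forces $u_\infty>0$. No complaints there.

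The first part, however, has a circularity as written. You propose to bound $h'$ in $L^\infty$ by applying $L^p$/Calder\'on--Zygmund estimates to the \emph{straightened} equation $w_t=w_{xx}+h'(t)w_x+f(t,x+h(t),w)$ — but the drift coefficient of that equation is precisely $h'$, the quantity you have not yet bounded, so the $W^{2,1}_p$ estimates you want to invoke are not available at that stage (their constants depend on an $L^\infty$ or $L^p$ bound for the first-order coefficient). The standard escape, and what the paper does, is to obtain the bound on $h'$ \emph{before} straightening: one works with $u$ on the moving domain, where the equation $u_t=u_{xx}+f$ has no drift, and bounds $u_x(t,h(t))$ by an explicit barrier near the free boundary (the paper simply cites \cite[Theorem 2.11]{DDL}, applied to the unit time-translates $h_k(t)=h(t+k)-h(k)$, which yields $\|h'\|_{C^{\alpha/2}(\Real)}\le C_1$ with $C_1$ independent of $k$). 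Only after $h'$ is known to be bounded and H\"older does the straightened equation become uniformly parabolic with $C^{\alpha/2,\alpha}$ data, and your Schauder step then goes through verbatim. So the gap is localized and repairable — replace the bootstrap on the straightened equation by a barrier argument (or the cited estimate) on the original free-boundary problem — but as stated the order of the argument does not close.
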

\begin{proof}
Let $h_{k}(t)=h(t+k)-h(k), u_{k}(t,x)=u(t+k,x+h(k))$, where $k\in\mathbf{Z}.$
Then $(u_{k},h_{k})$ satisfies
\begin{equation*}
\left\{
   \begin{aligned}
  (u_{k})_{t}=(u_{k})_{xx}+f(t+k,x+h(k),u_{k}),\ \ &t\in\Real, x<h_{k}(t),\\
   u_{k}(t,h_{k}(t))=0,\ h_{k}^{\prime}(t)=-\mu (u_{k})_{x}(t,h(t)),\ &t\in\Real.\\
   \end{aligned}
   \right.
\end{equation*}
By \cite[Theorem 2,11]{DDL}, we have $\|h_{k}\|_{C^{1+\alpha/2}([1,3])}\leq C_{1}$, where $C_{1}$
is a positive constant depending on $f$ and $\|u\|_{\infty}$ but not depending on $k$. Hence
$\|h^{\prime}\|_{C^{\alpha/2}(\Real)}\leq C_{1}$.

Let $w(t,x)=u(t,x+h(t))$ for any $t\in\Real, x\leq0$. Then $w$ satisfies
\begin{equation*}
\left\{
   \begin{aligned}
  w_{t}=w_{xx}+h^{\prime}(t)w_{x}+f(t,x+h(t),w),\ \ &t\in\Real, x<0,\\
   w(t,0)=0,\ h^{\prime}(t)=-\mu w_{x}(t,0),\ &t\in\Real.\\
   \end{aligned}
   \right.
\end{equation*}
Since $h^{\prime}\in{C^{\alpha/2}(\Real)}$ and $f(\cdot,\cdot,w)\in C^{\alpha/2,\alpha}(\Real^{2})$
uniformly for $w\in[0,\|u\|_{\infty}]$, the parabolic Schauder estimates yield that
$$\|w\|_{C^{1+\alpha/2,2+\alpha}([t,t+1]\times[-2,0])}\leq C_{2},$$
$$\|w\|_{C^{1+\alpha/2,2+\alpha}([t,t+1]\times[-(n+1),-n])}\leq C_{2},\ n=1,2,\cdots,$$
where $C_{2}$ is independent of $t$.
Therefore, $\|w\|_{C^{1+\alpha/2,2+\alpha}(\Real\times(-\infty,0])}\leq C_{2}.$
Thus \eqref{prioriestimate} holds.

Next we will show that $\inf\limits_{t\in \Real}h^{\prime}(t)>0$ provided $(u(t,x),h(t))$ is a transition
semi-wave which connects $p$ and $0$ with $\inf\limits_{(t,x)\in\Omega_{h}}p(t,x)>0$.
Suppose that $\inf\limits_{t\in \Real}h^{\prime}(t)=0$. Then there exists $\{t_{n}\}_{n\in\mathbb{N}}$ such that
$\lim\limits_{n\to\infty}h^{\prime}(t_{n})=0$.
Let $h_{n}(t)=h(t+t_{n})-h(t_{n}), u_{n}(t,x)=u(t+t_{n},x+h(t_{n}))$.
Then $(u_{n},h_{n})$ satisfies
\begin{equation*}
\left\{
   \begin{aligned}
  (u_{n})_{t}=(u_{n})_{xx}+f_{n}(t,x,u_{n}),\ \ &t\in\Real, x<h_{n}(t),\\
   u_{n}(t,h_{n}(t))=0,\ h_{n}^{\prime}(t)=-\mu (u_{n})_{x}(t,h_{n}(t)),\ &t\in\Real,\\
   \end{aligned}
   \right.
\end{equation*}
where $f_{n}(t,x,s)=f(t+t_{n},x+h(t_{n}),s)$. By the priori estimates,
we can find some subsequence of $\{(u_{n},h_{n},f_{n})\}_{n\in\mathbb{N}}$, still denoted by $\{(u_{n},h_{n},f_{n})\}_{n\in\mathbb{N}}$,
$h_{\infty}\in C^{1}_{loc}(\Real)$,
$u_{\infty}\in C^{1,2}_{loc}(\{(t,x): t\in\Real,x\leq h_{\infty}(t)\})$,
and $f_{\infty}(\cdot,\cdot,s)\in C^{\beta/2,\beta}_{loc}(\Real^{2})$ locally in $s\in\Real$ with some
$\beta<\alpha$ such that
$$h_{n}\to h_{\infty}\ \text{in}\ C^{1}_{loc}(\Real),\ u_{n}\to u_{\infty}\ \text{in}\ C^{1,2}_{loc}(\{(t,x): t\in\Real,x\leq h_{\infty}(t)\}),$$
$$f_{n}(\cdot,\cdot,s)\to f_{\infty}(\cdot,\cdot,s)\ \text{in}\ C^{\beta/2,\beta}_{loc}(\Real^{2})\ \text{uniformly w.r.t.}\ s\in[0,\|u\|_{\infty}].$$
Moreover, $(u_{\infty},h_{\infty})$ satisfies
\begin{equation*}
\left\{
   \begin{aligned}
  (u_{\infty})_{t}=(u_{\infty})_{xx}+f_{\infty}(t,x,u_{\infty}),\ \ &x<h_{\infty}(t),\\
   u_{\infty}(t,h_{\infty}(t))=0,\ \ &t\in\Real,\\
   h_{\infty}^{\prime}(t)=-\mu (u_{\infty})_{x}(t,h_{\infty}(t)),\ \ &t\in\Real,\\
   \end{aligned}
   \right.
\end{equation*}
with $h_{\infty}(0)=0$ and $(u_{\infty})_{x}(0,0)=0$.

On the other hand, by the definition of transition semi-waves, we can find $B>0$ larger enough
such that $u(t,h(t)-B)>\frac{1}{2}\inf\limits_{(t,x)\in\Omega_{h}}p(t,x)>0$ for $t\in\Real$.
Hence $u_{\infty}(t,h_{\infty}(t)-B)\geq\frac{1}{2}\inf\limits_{(t,x)\in\Omega_{h}}p(t,x)>0$.
Then the strong maximum principle yields that $u_{\infty}(t,x)>0$ for $t\in\Real,x< h_{\infty}(t)$.
Therefore, $(u_{\infty})_{x}(0,0)<0$ because of the Hopf's Lemma, which deduces a contradiction.
\end{proof}

\begin{prop}\label{prop3.1} Suppose  $p$ is bounded.
Let  $(u(t,x),h(t))$ be a bounded transition semi-wave of \eqref{1.1} which connects $p$ and $0$.
Assume that $\inf\limits_{(t,x)\in\Omega_{h}}p(t,x)>0$, and that $u\to f(t,x,u)$ is decreasing in $[p(t,x),+\infty)$ for all $(t,x)\in\Real^{2}$.
Then $u(t,x)<p(t,x)$ for any $(t,x)\in\Omega_{h}.$
\end{prop}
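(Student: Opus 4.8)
The plan is to prove $M:=\sup_{\Omega_{h}}(u-p)\le 0$ by a translation–compactness argument, and then to upgrade the inequality $u\le p$ to the strict one $u<p$ via the strong maximum principle. Throughout I write $w=u-p$, which is bounded on $\Omega_{h}$ (both $u$ and $p$ are bounded) and satisfies $w_{t}=w_{xx}+\big(f(t,x,u)-f(t,x,p)\big)$ in $\Omega_{h}$. First I would record three facts about $w$. On the free boundary $w(t,h(t))=-p(t,h(t))\le -m_{0}<0$, where $m_{0}:=\inf_{\Omega_{h}}p>0$; since Proposition \ref{prop_prioriestimate} and interior parabolic estimates for the bounded entire solution $p$ furnish a uniform gradient bound, there is $\delta>0$ (independent of $t$) with $w(t,x)\le -m_{0}/2$ whenever $h(t)-\delta\le x\le h(t)$, a uniform boundary layer. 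The transition-wave condition \eqref{defofTranW} gives $w(t,x)\to 0$ as $x-h(t)\to-\infty$, uniformly in $t$. Hence, if $M>0$, it can only be approached along points $(t_{n},x_{n})$ whose shifted abscissae $y_{n}:=x_{n}-h(t_{n})$ remain in a fixed compact interval $[-R,-\delta]$.

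Next I would translate in time and pass to a limit. Setting $h_{n}(t)=h(t+t_{n})-h(t_{n})$, $u_{n}(t,x)=u(t+t_{n},x+h(t_{n}))$, $p_{n}(t,x)=p(t+t_{n},x+h(t_{n}))$ and $f_{n}(t,x,s)=f(t+t_{n},x+h(t_{n}),s)$, the uniform estimates of Proposition \ref{prop_prioriestimate} (together with interior estimates for $p$) allow me to extract limits $h_{\infty},u_{\infty},p_{\infty},f_{\infty}$ exactly as in the proof of that proposition. The limit $w_{\infty}:=u_{\infty}-p_{\infty}$ then satisfies $w_{\infty}\le M$ on $\Omega_{h_{\infty}}$ and attains the value $M$ at the point $(0,y_{*})$ with $y_{*}=\lim y_{n}\in[-R,-\delta]$; moreover $w_{\infty}(t,x)\to 0$ as $x-h_{\infty}(t)\to-\infty$ uniformly in $t$, and the monotonicity hypothesis passes to the limit, so $s\mapsto f_{\infty}(t,x,s)$ is nonincreasing on $[p_{\infty}(t,x),+\infty)$. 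Because $M>0$ and $w_{\infty}$ is defined for all $t\in\Real$, the point $(0,y_{*})$ is a genuine interior maximum of $w_{\infty}$ in both $t$ and $x$.

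Then comes the crucial step. At any point where $w_{\infty}>0$ one has $u_{\infty}>p_{\infty}$, so both values lie in $[p_{\infty},+\infty)$ and the monotonicity gives $f_{\infty}(t,x,u_{\infty})-f_{\infty}(t,x,p_{\infty})\le 0$, whence $(w_{\infty})_{t}\le (w_{\infty})_{xx}$ there; thus $w_{\infty}$ is a subsolution of the heat equation on the open set $\{w_{\infty}>0\}$, which contains the interior maximum $(0,y_{*})$. The strong maximum principle forces $w_{\infty}\equiv M$ on a horizontal neighbourhood of $(0,y_{*})$, and I would promote this to the whole slice by a connectedness argument: the set $E=\{x<h_{\infty}(0):w_{\infty}(0,x)=M\}$ is closed by continuity and open because at each of its points $w_{\infty}=M>0$ provides a two-dimensional positivity neighbourhood on which the heat-subsolution inequality, hence horizontal propagation, is valid. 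Since the slice $(-\infty,h_{\infty}(0))$ is connected and $E\ne\emptyset$, one gets $E=(-\infty,h_{\infty}(0))$, which contradicts the boundary layer, where $w_{\infty}<0$ near $x=h_{\infty}(0)$. Therefore $M\le 0$, i.e. $u\le p$ on $\Omega_{h}$.

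Finally, to obtain the strict inequality I would observe that $w=u-p\le 0$ solves the linear equation $w_{t}=w_{xx}+c(t,x)w$ with the bounded coefficient $c(t,x)=\int_{0}^{1}\partial_{u}f(t,x,p+sw)\,ds$. If $w$ vanished at some interior point, then, its maximum value being $0$, the strong maximum principle (no sign condition on $c$ is needed when the extreme value is $0$) would force $w\equiv 0$ backward in time up to the lateral boundary, contradicting $w(t,h(t))=-p(t,h(t))<0$. Hence $w<0$, that is $u<p$ throughout $\Omega_{h}$. I expect the main obstacle to be precisely the maximum-principle step on the unbounded, non-compact domain with a reaction term of no definite sign; the resolution is to first produce an \emph{attained} interior maximum by translation and compactness, to invoke the monotonicity only on the positivity set (where it yields a clean heat-subsolution inequality), and to use the open--closed argument to carry the maximum across the whole time slice into the boundary layer.
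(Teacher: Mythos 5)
Your proof is correct, and its core step runs along a genuinely different track from the paper's. The two arguments share the same preparation: the uniform boundary layer near $x=h(t)$ (from the gradient bound of Proposition \ref{prop_prioriestimate} and $\inf p>0$) and the fact, forced by \eqref{defofTranW}, that any extremal sequence for $u-p$ must stay at bounded distance from the free boundary. They then diverge. The paper never passes to a limit: setting $m=\inf(p-u)$ and supposing $m<0$, it observes that $p-m\ge p$ is a supersolution by the monotonicity of $f(t,x,\cdot)$ above $p$, so $w=p-m-u\ge0$ satisfies a linear parabolic inequality with $w(t_n,x_n)\to0$, and it propagates this smallness backwards along a finite chain of parabolic boxes via the Harnack inequality until the chain hits the free boundary at time $t_n-1$ and collides with the boundary layer. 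You instead translate by $(t_n,h(t_n))$, use compactness of $(u_n,p_n,f_n)$ to produce a limit problem in which the supremum $M>0$ of $u-p$ is genuinely attained at an interior point, note that on the positivity set $\{u_\infty>p_\infty\}$ the monotonicity hypothesis turns $w_\infty=u_\infty-p_\infty$ into a subsolution of the heat equation, and spread the maximum across the whole time-zero slice by the strong maximum principle plus an open--closed argument, again colliding with the boundary layer. Your route is softer and arguably cleaner; its cost is the extraction of the limit equation (available here because the standing regularity assumptions on $f$ and Proposition \ref{prop_prioriestimate} give the needed uniform estimates, and because the monotonicity of $f$ above $p$ passes to $f_\infty$ above $p_\infty$). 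The paper's chaining argument is more quantitative, stays on the original solution, and is the template reused verbatim in Propositions \ref{prop3.2} and \ref{propspaceshift}, which is presumably why the authors prefer it. The final upgrade from $u\le p$ to $u<p$ via the strong maximum principle at a zero maximum is the same in both proofs.
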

\begin{proof}
The strategy of the proof is similar to that of \cite[Lemma 4.3]{BH12}.
Note that $m:=\inf\limits_{(t,x)\in\Omega_{h}}\{p(t,x)-u(t,x)\}$ is well defined since $u$ is bounded and $\inf\limits_{(t,x)\in\Omega_{h}}p(t,x)>0$.
Suppose that $m<0$.
Then there exists a sequence $\{(t_{n},x_{n})\}_{n\in\mathbb{N}}$ in $\Omega_{h}$ such that
\begin{equation}\label{3.1}
p(t_{n},x_{n})-u(t_{n},x_{n})\to m\ \text{as}\ n\to\infty.
\end{equation}
Claim: $\{x_{n}-h(t_{n})\}_{n\in\mathbb{N}}$ is bounded.\\
Proof of Claim: If not, then we must have $x_{n_{k}}-h(t_{n_{k}})\to -\infty$ for some subsequence of
$\{(t_{n},x_{n})\}_{n\in\mathbb{N}}$. Therefore, it follows from \eqref{defofTranW} that
$$\lim\limits_{k\to\infty}\big(p(t_{n_{k}},x_{n_{k}})-u(t_{n_{k}},x_{n_{k}})\big)=0>m,$$
which contradicts \eqref{3.1}. Hence $\{x_{n}-h(t_{n})\}_{n\in\mathbb{N}}$ is bounded.

Noting that $u(t,h(t))=0$, we can find $\kappa>0$ such that
\begin{equation}\label{3.2}
p(t,x)-u(t,x)>\inf\limits_{(t,x)\in\Omega_{h}}p(t,x)/2>0\ \text{for}\ -\kappa<x-h(t)\leq0
\end{equation}
since $u_{x}$ is uniformly continuous in $\overline{\Omega}_{h}$.
As a consequence, $x_{n}-h(t_{n})\leq-\kappa$.

It is clear that $\{x_{n}-h(t_{n}-1)\}_{n}$ is also bounded since $h^{\prime}$ is bounded.
Now take $\rho\in(0,\frac{\kappa}{4})$ such that $|h(s)-h(t)|\leq\frac{\kappa}{2}$ for any $|s-t|\leq\rho$
and $K\in\mathbb{N}$ such that
\begin{equation}\label{4.9}
K\rho\geq\max\{1, \sup\limits_{n\in\mathbb{N}}|x_{n}-h(t_{n}-1)|\}.
\end{equation}
For each $n$ and $i=0,1,\cdots,K$, set
$$x_{n,i}=x_{n}+\frac{i}{K}\big(h(t_{n}-1)-x_{n}\big),$$
and
$$E_{n,i}=[t_{n}-\frac{i+1}{K},t_{n}-\frac{i}{K}]\times[x_{n,i}-2\rho,x_{n,i}+2\rho].$$
Then $|x_{n,i+1}-x_{n,i}|\leq\rho$ for $i=0,1,\cdots,K-1$ by \eqref{4.9}.
Consider $i=0$. For $t\in[t_{n}-\frac{1}{K},t_{n}],$ we have
$$x_{n,0}+2\rho-h(t)=x_{n}-h(t_{n})+2\rho+h(t_{n})-h(t)\leq2\rho-\kappa+h(t_{n})-h(t)<0.$$
Hence $E_{n,0}\subset\{(t,x): x-h(t)<0\}$. Let $w=p-m-u$.
Then $w\geq0$ in $E_{n,0}$.
Moreover, $p-m$ satisfies
$$(p-m)_{t}=(p-m)_{xx}+f(t,x,p)
\geq (p-m)_{xx}+f(t,x,p-m).$$
Therefore,
$$w_{t}\geq w_{xx}+\frac{f(t,x,p-m)-f(t,x,u)}{p-m-u}w$$
in $\{(t,x): x-h(t)<0\}$ with $\lim\limits_{n\to\infty}w(t_{n},x_{n})=0$.
Then the linear parabolic estimates imply that $\lim\limits_{n\to\infty}w(t_{n}-\frac{1}{K},x_{n,1})=0$, i.e.,
$\lim\limits_{n\to\infty}p(t_{n}-\frac{1}{K},x_{n,1})-m-u(t_{n}-\frac{1}{K},x_{n,1})=0.$
This and \eqref{3.2} yield that $x_{n,1}-h(t_{n}-\frac{1}{K})\leq-\kappa$ for $n$ large.
From this, we have $x_{n,1}+2\rho-h(t)<0$ for any $t\in[t_{n}-\frac{2}{K},t_{n}-\frac{1}{K}]$.
Hence $E_{n,1}\subset\Omega_{h}$. Repeat the arguments above, and finally, by induction, we have
$$x_{n,i}-h(t_{n}-\frac{i}{K})\leq-\kappa,\ i=0,1,\cdots,K,$$
which contradicts $x_{n,K}=h(t_{n}-1)$. Hence $m\geq0$, i.e., $p(t,x)\geq u(t,x)$ in $\Omega_{h}$.

If $p(t_{0},x_{0})=u(t_{0},x_{0})$ for some $(t_{0},x_{0})\in\Omega_{h}$,
then the strong parabolic maximum principle implies that $p(t,x)\equiv u(t,x)$ in $\Omega_{h}$,
which contradicts \eqref{3.2}.
\end{proof}

\begin{prop}\label{prop3.2}Suppose that $p$ is bounded.
Let $(u(t,x),h(t))$ be a bounded transition semi-wave of \eqref{1.1} which connects $p$ and $0$. Assume that $\inf\limits_{(t,x)\in\Omega_{h}}p(t,x)>0$.
%Assume that $0<\inf\limits_{t}h^{\prime}(t)\leq\sup\limits_{t}h^{\prime}(t)<+\infty$ and $u_{x}$ is uniformly continuous in $\overline{\Omega}_{h}$.
Then for any fixed $a>0$, $\inf\limits_{x-h(t)\leq-a}u(t,x)>0$.
\end{prop}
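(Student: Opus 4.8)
The plan is to argue by contradiction, reducing the statement to the positivity of a limiting profile at an interior point and then invoking the strong maximum principle. Write $m_0:=\inf_{(t,x)\in\Omega_h}p(t,x)>0$. Suppose the conclusion fails for some $a>0$, so there is a sequence $(t_n,x_n)\in\Omega_h$ with $\xi_n:=x_n-h(t_n)\le -a$ and $u(t_n,x_n)\to 0$. The first step is a dichotomy on the sequence $\{\xi_n\}$. If $\xi_n\to-\infty$ along a subsequence, then the defining relation \eqref{defofTranW} of a transition semi-wave forces $p(t_n,x_n)-u(t_n,x_n)\to 0$, hence $p(t_n,x_n)\to 0$, which contradicts $m_0>0$. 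Therefore $\{\xi_n\}$ is bounded and, after passing to a subsequence, $\xi_n\to\xi_*\in(-\infty,-a]$.

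Next I would run the same normalization and compactness argument used in Proposition \ref{prop_prioriestimate}. Setting $u_n(t,x)=u(t+t_n,x+h(t_n))$, $h_n(t)=h(t+t_n)-h(t_n)$ and $f_n(t,x,s)=f(t+t_n,x+h(t_n),s)$, the a priori estimate \eqref{prioriestimate} provides uniform bounds independent of $n$, so along a subsequence $h_n\to h_\infty$ in $C^1_{loc}(\Real)$, $u_n\to u_\infty$ in $C^{1,2}_{loc}$, and $f_n\to f_\infty$ locally, with $(u_\infty,h_\infty)$ a nonnegative bounded entire solution of the limiting free boundary problem and $h_\infty(0)=0$. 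By construction $u_\infty(0,\xi_*)=\lim_n u_n(0,\xi_n)=\lim_n u(t_n,x_n)=0$, and since $\xi_*\le -a<0=h_\infty(0)$ this is an interior point of $\Omega_{h_\infty}$.

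The decisive ingredient is that $u_\infty$ inherits a uniform lower bound far behind the front. Because the limit in \eqref{defofTranW} is uniform in $t$, it is uniform in $n$ after translation; together with the fact that $p(t+t_n,x+h(t+t_n))\ge m_0$ whenever $x<0$ (the argument then lies in $\Omega_h$), this gives a single $B_0>0$, independent of $n$, with $u_n(t,x+h_n(t))\ge m_0/2$ for all $t$ and all $x\le -B_0$. Passing to the limit and taking $t=0$ yields $u_\infty(0,x)\ge m_0/2$ for every $x\le -B_0$. Finally, writing $f_\infty(t,x,u_\infty)=c_\infty(t,x)u_\infty$ with $c_\infty$ bounded (valid since $f_\infty(t,x,0)=0$ and $\partial_u f$ is bounded), the function $u_\infty\ge 0$ solves a linear parabolic equation with bounded coefficients. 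Because it vanishes at the interior point $(0,\xi_*)$, the parabolic strong maximum principle (equivalently, the Harnack inequality for nonnegative solutions) forces $u_\infty\equiv 0$ on the part of $\Omega_{h_\infty}$ with $t\le 0$; in particular $u_\infty(0,x)=0$ for all $x<0$, contradicting the lower bound $u_\infty(0,x)\ge m_0/2$ just obtained. This contradiction proves $\inf_{x-h(t)\le -a}u(t,x)>0$.

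I would expect the main obstacle to be the third paragraph: one must ensure that the far-field lower bound genuinely survives the passage to the limit, so that the strong maximum principle produces a true contradiction rather than the trivial solution $u_\infty\equiv 0$. This hinges on the uniformity in $t$ of the convergence in \eqref{defofTranW} and on the translation invariance of the a priori bounds from Proposition \ref{prop_prioriestimate}. By comparison, the compactness extraction is routine given that proposition, and the dichotomy on $\{\xi_n\}$ is merely the mechanism that separates the easy far-field case from the genuinely interior case handled by the limiting argument.
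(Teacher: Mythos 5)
Your proof is correct, but it takes a genuinely different route from the paper's. After the common first move (the dichotomy on $\xi_n=x_n-h(t_n)$, which both arguments settle the same way using \eqref{defofTranW} and $\inf p>0$), the paper stays at the level of the original solution: it first extracts a positive lower bound for $u$ on a strip $-2\kappa\le x-h(t)\le-\kappa$ near the free boundary (from the Stefan condition $h'=-\mu u_x\ge\inf h'>0$ and the uniform continuity of $u_x$), and then runs a quantitative Harnack chain through boxes $E_n^i$ going backward in time and toward the boundary, so that the smallness of $u(t_n,x_n)$ propagates to a point of that strip, a contradiction. You instead pass to a translated limit $(u_\infty,h_\infty,f_\infty)$ exactly as in the second half of Proposition \ref{prop_prioriestimate}, and get the contradiction from a single application of the parabolic strong maximum principle against the far-field bound $u_\infty(0,x)\ge m_0/2$ for $x\le -B_0$ (which does survive the limit, since $B_0$ from \eqref{defofTranW} is uniform in $t$ and the a priori estimates are translation invariant). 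Your source of contradiction is the far field rather than the near-boundary strip, so you never need the Stefan-condition lower bound; the price is the compactness machinery, in particular the extraction of $f_\infty$, which requires the H\"older regularity of $f$ in $(t,x)$ that the paper assumes and already exploits in Proposition \ref{prop_prioriestimate}. One small point worth making explicit if you write this up: the propagation of the zero set of $u_\infty$ at and before time $0$ uses that $\{(t,x):t\le 0,\,x<h_\infty(t)\}$ is reachable from $(0,\xi_*)$ by horizontal and forward-in-time paths, which holds because $h_\infty'\ge\inf_t h'>0$ makes $h_\infty$ increasing; with that noted, the argument closes.
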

\begin{proof}
Noting that $0<\inf\limits_{t\in\mathbb{R}}h^{\prime}(t)\leq\sup\limits_{t\in\mathbb{R}}h^{\prime}(t)<+\infty$ and $u_{x}$ is uniformly continuous in $\overline{\Omega}_{h}$,
we always have $\inf\limits_{-2\kappa\leq x-h(t)\leq-\kappa}u(t,x)>0$ for $\kappa>0$ small.
Then it is sufficient to show $\inf\limits_{x-h(t)\leq-2\kappa}u(t,x)>0$.
If not, then there exists $\{(t_{n},x_{n})\}_{n\in\mathbb{N}}$ with $x_{n}-h(t_{n})<-2\kappa$ such that
$u(t_{n},x_{n})\to0$ as $n\to\infty$.
Furthermore, $\{x_{n}-h(t_{n})\}_{n\in\mathbb{N}}$ is a bounded sequence. In fact, there exists a subsequence $\{(t_{n_{k}},x_{n_{k}})\}_{k\in\mathbb{N}}$
of $\{(t_{n},x_{n})\}_{n\in\mathbb{N}}$ such that $x_{n_k}-h(t_{n_k})\to-\infty$ if $\{x_{n}-h(t_{n})\}_{n\in\mathbb{N}}$ is unbounded.
Then the definition of the transition wave yields that $\lim\limits_{k\to\infty}\big|p(t_{n_{k}},x_{n_{k}})-u(t_{n_{k}},x_{n_{k}})\big|
=\lim\limits_{k\to\infty}\big|p(t_{n_{k}},x_{n_{k}})\big|\geq\inf\limits_{(t,x)\in\Omega_{h}}p(t,x)>0$, which contradicts \eqref{defofTranW}. 
Hence we may assume that $-A\leq x_{n}-h(t_{n})<-2\kappa$.
Take $K\in\mathbb{Z}^{+}$ with
$\kappa K>A+b_{0},$ where $b_{0}=\sup\limits_{t\in\mathbb{R}}|h^{\prime}(t)|$. For $i=0,1,\cdots,K-1$, we set
$$t_{n}^{i}=t_{n}-\frac{i}{K},\ x_{n}^{i}=x_{n}+\frac{A}{K}i,$$
$$E_{n}^{i}=[t_{n}^{i+1},t_{n}^{i}]\times[x_{n}^{i},x_{n}^{i+1}].$$
Consider $E_{n}^{0}$. For $t\in[t_{n}^{1},t_{n}^{0}]$,
$$x_{n}^{1}-h(t)=
x_{n}^{0}+\frac{A}{K}-h(t_{n}^{0})+h(t_{n}^{0})-b(t)<-2\kappa+\frac{A+b_{0}}{K}\leq-\kappa,$$
i.e., $E_{n}^{0}\subset\{(t,x): x-h(t)\leq-\kappa\}.$ Moreover, we have either
$$-\kappa\geq x_{n}^{1}-h(t_{n}^{1})\geq-2\kappa$$
or
$$x_{n}^{1}-h(t_{n}^{1})<-2\kappa.$$
If $x_{n}^{1}-h(t_{n}^{1})<-2\kappa$ holds, then we have $E_{n}^{1}\subset\{(t,x): x-h(t)\leq-\kappa\}.$
In fact, for $t\in[t_{n}^{2},t_{n}^{1}]$,
$$x_{n}^{2}-h(t)=
x_{n}^{1}+\frac{A}{K}-h(t_{n}^{1})+h(t_{n}^{1})-h(t)<-2\kappa+\frac{A+b_{0}}{K}\leq-\kappa.$$
As before, we have either
$$-\kappa\geq x_{n}^{2}-h(t_{n}^{2})\geq-2\kappa$$
or
$$x_{n}^{2}-h(t_{n}^{2})<-2\kappa.$$
By induction, for any $n$, there exists $k_{n}\in\{1,2,\cdots,K-1\}$ such that
\begin{equation}\label{2.2}
\left\{
   \begin{aligned}
 -\kappa\geq x_{n}^{k_{n}}-h(t_{n}^{k_{n}})\geq-2\kappa,\\
 x_{n}^{i}-h(t_{n}^{i})<-2\kappa, E_{n}^{i}\subset\Omega_{h}(\kappa),\\
   \end{aligned}
   \right.
\end{equation}
for $i<k_{n}$, where $\Omega_{h}(\kappa):=\{(t,x): x-h(t)\leq-\kappa\}$.
Since $x_{n}^{K}-h(t_{n}^{K})\geq0$, we have $k_{n}\neq K$.
Up to extraction of some subsequence, we can assume that $k_{n}\equiv k$.
Applying the linear parabolic estimates to $u$, we have
$u(t_{n}^{i},x_{n}^{i})\to 0$ as $n\to\infty$ for $i=1,2,\cdots,k.$
On the other hand, $u(t_{n}^{k},x_{n}^{k})\geq\inf\limits_{-2\kappa\leq x-h(t)\leq-\kappa}u(t,x)>0$
since \eqref{2.2}, which contradicts $\lim\limits_{n\to\infty}u(t_{n}^{k},x_{n}^{k})=0.$
\end{proof}

\begin{prop}\label{propspaceshift}Suppose that $p$ is bounded.
Let $(u(t,x),h(t))$ be a bounded transition semi-wave of \eqref{1.1} which connects $p$ and $0$ with $\inf\limits_{(t,x)\in\Omega_{h}}p(t,x)>0$.
Assume that there is $\theta>0$ such that $u\to f(t,x,u)$ is decreasing
in $[p(t,x)-\theta,+\infty)$ for all $(t,x)\in\Real^{2}$.
If $p(t,x)$ and $f(t,x,u)$ are decreasing in $x$, then $u(t,x)$ is decreasing in $x$. Specially, if $p(t,x)$ and $f(t,x,u)$ are independent of $x$, then $u(t,x)$ is decreasing in $x$. 
\end{prop}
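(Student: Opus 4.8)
The plan is to prove the equivalent statement that $u(t,x)\le u(t,x-s)$ for every $s>0$ and every $(t,x)\in\overline{\Omega_h}$; letting $s\to0^{+}$ and using the uniform $C^{2+\alpha}$ bound of Proposition~\ref{prop_prioriestimate} then yields $u_{x}\le0$, i.e. the desired monotonicity. The starting observation is that, because $f$ is non-increasing in $x$, the shifted pair
$$u^{s}(t,x):=u(t,x-s),\qquad h_{s}(t):=h(t)+s$$
is a super-solution of \eqref{1.1} on the larger region $\Omega_{h_{s}}\supset\Omega_{h}$: indeed $u^{s}$ solves the equation with reaction $f(t,x-s,u^{s})\ge f(t,x,u^{s})$, while the free-boundary conditions $u^{s}(t,h_{s}(t))=0$ and $h_{s}'(t)=-\mu\,u^{s}_{x}(t,h_{s}(t))$ hold with equality. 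Thus the statement reduces to the comparison $u\le u^{s}$ on $\Omega_{h}$ between the solution $u$ and the super-solution $u^{s}$, whose free boundary lies strictly to the right.

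I would prove $u\le u^{s}$ for each fixed $s>0$ by a sliding/infimum argument. Set $m_{s}:=\inf_{\Omega_{h}}\big(u^{s}-u\big)$ and suppose, for contradiction, that $m_{s}<0$. On the free boundary $x=h(t)$ one has $u^{s}-u=u(t,h(t)-s)\ge\inf_{x-h(t)\le-s}u>0$ by Proposition~\ref{prop3.2}, so a minimizing sequence $(t_{n},x_{n})$ cannot approach the boundary; and if $x_{n}-h(t_{n})\to-\infty$, then \eqref{defofTranW} together with the monotonicity of $p$ gives $u^{s}-u\to p(t_{n},x_{n}-s)-p(t_{n},x_{n})\ge0>m_{s}$, so the sequence cannot run to the interior either. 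Hence $x_{n}-h(t_{n})$ stays in a compact sub-interval $[-B,-b]\subset(-\infty,0)$. Translating in time, passing to the moving frame and invoking the a priori estimates of Proposition~\ref{prop_prioriestimate}, I extract limits $(u_{\infty},h_{\infty},f_{\infty})$ — with $f_{\infty}$ still non-increasing in $x$ — for which the negative infimum $m_{s}$ is attained at an interior point $(0,\xi_{*})$.

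At this stage the contradiction is produced essentially as in Proposition~\ref{prop3.1}. Writing $w:=u_{\infty}^{s}-u_{\infty}-m_{s}\ge0$ (which vanishes at $(0,\xi_{*})$), one computes
$$w_{t}-w_{xx}\ \ge\ f_{\infty}(t,x,u_{\infty}^{s})-f_{\infty}(t,x,u_{\infty})\ =\ c(t,x)\,w+c(t,x)\,m_{s},$$
with $c=\partial_{u}f_{\infty}$ bounded. The stray term $c\,m_{s}$ has the wrong sign unless $c\le0$, and this is exactly where the hypothesis that $u\mapsto f(t,x,u)$ is decreasing on $[p-\theta,+\infty)$ enters: I first replace $u^{s}$ by the lifted super-solution $u^{s}-m_{s}$, whose compared values lie in $[u^{s},u^{s}+|m_{s}|]$, so that on the part of $\Omega_{h}$ where $u^{s}\ge p-\theta$ the lifted function is a genuine super-solution and $w$ satisfies a linear parabolic inequality with non-positive zeroth-order coefficient. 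The strong maximum principle then forces $w\equiv0$, and Hopf's lemma at the free boundary, where $w>0$ by Proposition~\ref{prop3.2}, yields the contradiction. Near the free boundary, where $f$ need not be monotone, monotonicity is instead supplied directly by $\inf_{t}h'(t)>0$ (Proposition~\ref{prop_prioriestimate}): since $u_{x}(t,h(t))=-h'(t)/\mu<0$ and $u_{xx}$ is bounded, $u_{x}<0$ on a uniform strip $\{0\le h(t)-x\le\delta\}$, which excludes the minimizer from that strip.

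The main obstacle is to cover the intermediate range of depths, where neither the deep reaction-monotonicity ($u^{s}\ge p-\theta$) nor the near-boundary Hopf estimate applies directly; my plan is to close this gap through the value-lift above combined with the strip confinement, so that in the limit the minimizer is forced into the region $\{u\ge p-\theta\}$ on which $f$ is decreasing. In the special case where $p$ and $f$ are independent of $x$, the function $u^{s}$ is itself an exact solution of \eqref{1.1}, so the argument becomes a comparison between two solutions with nested free boundaries; here the deep part, where $p$ is flat and $u^{s}-u\to0$, must likewise be handled through this reaction-monotone region. Once $u\le u^{s}$ is established for all $s>0$, letting $s\to0^{+}$ gives $u_{x}\le0$, which is the assertion.
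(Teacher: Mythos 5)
Your reduction to the comparison $u\le u^{s}$ and your identification of where each hypothesis should enter are sound, and the boundary-strip exclusion and the exclusion of $x_n-h(t_n)\to-\infty$ are both correct. However, there is a genuine gap at exactly the point you flag as ``the main obstacle,'' and the remedy you propose does not close it. For your lifted translate $u^{s}+|m_{s}|$ to be a supersolution you need $f(t,x,\cdot)$ to be decreasing on the interval $[u^{s}(t,x),\,u^{s}(t,x)+|m_{s}|]$, which the hypothesis only guarantees where $u^{s}(t,x)\ge p(t,x)-\theta$. By the transition-wave property this holds only for $x-h(t)\le -B'+s$ for some fixed $B'>0$; for small $s$ there remains a band $-B'+s\le x-h(t)\le-\kappa$ of uniformly positive width in which $u^{s}$ may lie below $p-\theta$ and in which neither the zeroth-order coefficient has a sign nor the supersolution inequality for the lifted function is available. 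There is no mechanism forcing a hypothetical negative minimizer of $u^{s}-u$ (nor the backward-in-time propagation of the touching, whether by weak Harnack chains or by the strong maximum principle, which must sweep through this band on its way to the free boundary) into the region $\{u^{s}\ge p-\theta\}$, so the contradiction cannot be produced for small $s$ by this route.

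This is precisely why the paper's proof is organized as a two-step sliding argument rather than a direct comparison for every $s>0$. In Step 1 the inequality $u^{\xi}\ge u$ is proved only for large shifts $\xi\ge B$, chosen so that $u^{\xi}\ge p-\theta/2$ \emph{everywhere} in $\Omega_{h}$; there the $\varepsilon^{*}$-lift is a genuine supersolution on the whole domain and your argument goes through. In Step 2 one sets $\xi^{*}=\inf\{\xi: u^{\xi'}\ge u\ \forall \xi'\ge\xi\}$ and decreases the shift: the already established inequality $u^{\xi^{*}}-u\ge0$ makes the linear parabolic/weak Harnack argument applicable \emph{without any lift} (a nonnegative supersolution with merely bounded zeroth-order coefficient suffices), yielding a strictly positive gap on the compact band $-B\le x-h(t)\le0$; this strict gap absorbs a small decrease of the shift there, while the deep region $x-h(t)\le-B$ (where $u^{\xi^{*}-\xi}\ge p-\theta$ for $\xi$ small) is handled by the $\varepsilon^{*}$-argument. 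You should restructure your proof along these lines: prove the comparison first for large shifts only, then slide.
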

\begin{proof}
We prove this proposition in two steps. Denote $u^{\xi}(t,x)=u(t,x-\xi)$.\\
Step 1: Show that there exists some constant $B>0$ such that for any $\xi\geq B,$
\begin{equation}\label{spaceshift}
u^{\xi}(t,x)\geq u(t,x)\ \text{for}\ (t,x)\in\Omega_{h}.
\end{equation}

Since $(u(t,x),h(t))$ is a transition semi-wave of \eqref{1.1} which connects $p$ and $0$,
we can find $B>0$ such that $u(t,x)>p(t,x)-\frac{\theta}{2}$ for any $x-h(t)<-B$.
Note that $u(t,x)$ is bounded. Then
$$\varepsilon^{*}=\inf\{\varepsilon>0: u^{\xi}(t,x)+\varepsilon\geq u(t,x)\ \text{for}\ (t,x)\in\Omega_{h}\}$$
is well defined. In particular,
$$u^{\xi}(t,x)+\varepsilon^{*}\geq u(t,x)\ \text{for}\ (t,x)\in\Omega_{h}.$$
It is sufficient to show that $\varepsilon^{*}=0$.

Suppose that $\varepsilon^{*}>0$. Then there exist sequences $\{\varepsilon_{n}\}_{n\in\mathbb{N}}$ increasing to $\varepsilon^{*}$
and $\{(t_{n},x_{n})\}_{n\in\mathbb{N}}$ with $x_{n}-h(t_{n})<0$ such that
\begin{equation}\label{3.7}
u^{\xi}(t_{n},x_{n})+\varepsilon_{n}<u(t_{n},x_{n}).
\end{equation}
Since $0<\sup\limits_{t\in\mathbb{R}}h^{\prime}(t)<+\infty$
and $u_{x}(t,x)$ is uniformly continuous in $\Omega_{h}$,
we can find some constant $\kappa>0$ such that
\begin{equation}\label{3.8}
u^{\xi}(t,x)+\frac{1}{2}\varepsilon^{*}>u(t,x)\ \forall t\in\Real, h(t)-\kappa<x<h(t).
\end{equation}
We may assume that $\varepsilon_{n}>\frac{1}{2}\varepsilon^{*}$.
It follows from \eqref{3.7} and \eqref{3.8} that $x_{n}-h(t_{n})\leq-\kappa$.\\
Claim: $\{x_{n}-h(t_{n})\}_{n\in\mathbb{N}}$ is bounded.\\
Proof of Claim: If not, then we must have $x_{n_{k}}-h(t_{n_{k}})\to -\infty$ for some subsequence $\{(t_{n_{k}},x_{n_{k}})\}_{n\in\mathbb{N}}$ of
$\{(t_{n},x_{n})\}_{n\in\mathbb{N}}$. Therefore, it follows from \eqref{3.7} and the monotonicity of $p$ that
\begin{equation*}
\begin{split}
\lim\limits_{k\to\infty}\big(p(t_{n_{k}},x_{n_{k}}-\xi)-u(t_{n_{k}},x_{n_{k}}-\xi)\big)
&\geq\lim\limits_{k\to\infty}\big(p(t_{n_{k}},x_{n_{k}}-\xi)-
 u(t_{n_{k}},x_{n_{k}})+\varepsilon_{n_{k}}\big)\\
&\geq\lim\limits_{k\to\infty}\big(p(t_{n_{k}},x_{n_{k}})-
 u(t_{n_{k}},x_{n_{k}})\big)+\varepsilon^{*}\\
&=\varepsilon^{*}.\\
\end{split}
\end{equation*}
But
$\lim\limits_{k\to\infty}\big(p(t_{n_{k}},x_{n_{k}}-\xi)-u(t_{n_{k}},x_{n_{k}}-\xi)\big)
=0<\varepsilon^{*},$ which is a contradiction. Hence $\{x_{n}-h(t_{n})\}_{n\in\mathbb{N}}$ is bounded.

Now take the same notations $\rho, K\in\mathbb{N}, x_{n,i},$ and $E_{n,i}$ as defined in the proof of Proposition \ref{prop3.1}.
Let $w=u^{\xi}+\varepsilon^{*}-u$.
Then $w\geq0$ in $E_{n,0}$. Note that $u^{\xi}(t,x)>p(t,x)-\theta$ in $\Omega_{h}$ since $\xi\geq B$.
The monotonicity of $f$ implies that
$$(u^{\xi}+\varepsilon^{*})_{t}=(u^{\xi}+\varepsilon^{*})_{xx}+f(t,x-\xi,u^{\xi})
\geq (u^{\xi}+\varepsilon^{*})_{xx}+f(t,x,u^{\xi}+\varepsilon^{*}).$$
Therefore,
$$w_{t}\geq w_{xx}+\frac{f(t,x,u^{\xi}+\varepsilon^{*})-f(t,x,u)}{u^{\xi}+\varepsilon^{*}-u}w$$
in $E_{n,0}$ with $\lim\limits_{n\to\infty}w(t_{n},x_{n})=0$ since
$0\leq w(t_{n},x_{n})=u^{\xi}(t_{n},x_{n})+\varepsilon^{*}-u(t_{n},x_{n})\leq\varepsilon^{*}-\varepsilon_{n}.$
The same arguments as used in the proof of Proposition \ref{prop3.1} imply that
$$x_{n,i}-h(t_{n}-\frac{i}{K})\leq-\kappa,\ i=0,1,\cdots,K,$$
which contradicts $x_{n,K}=h(t_{n}-1)$. Hence $\varepsilon^{*}=0$.
That is, for any $\xi\geq B$,
$$u^{\xi}(t,x)\geq u(t,x)\ \text{for}\ (t,x)\in\Omega_{h}.$$

Now let us define
$$\xi^{*}=\inf\{\xi>0: u^{\xi^{\prime}}(t,x)\geq u(t,x)\ \forall t\in\Real, x\leq h(t),
\xi^{\prime}\geq\xi\}.$$
Then $\xi^{*}\in[0,B]$, $u^{\xi^{*}}(t,x)\geq u(t,x)$ for any $(t,x)\in\Omega_{h}$.\\
Step 2: Show that $\xi^{*}=0$.\\
If $\xi^{*}>0$, then by Proposition \ref{prop3.2} we have
$\inf\limits_{x\leq h(t)}u^{\xi^{*}}(t,x)=\inf\limits_{x\leq h(t)-\xi^{*}}u(t,x)>0=u(t,x).$
Furthermore, there exists $\kappa>0$ such that
\begin{equation}\label{3.3}
\inf\limits_{-\kappa<x-h(t)\leq0}\{u^{\xi^{*}}(t,x)-u(t,x)\}>0.
\end{equation}
Claim: $\inf\limits_{-B<x-h(t)\leq0}\{u^{\xi^{*}}(t,x)-u(t,x)\}>0.$\\
Proof of Claim: If not, then there exists $\{(t_{n},x_{n})\}_{n\in\mathbb{N}}$ with $x_{n}-h(t_{n})\in[-B,-\kappa]$
such that $\lim\limits_{n\to\infty}u^{\xi^{*}}(t_{n},x_{n})-u(t_{n},x_{n})=0.$
Let $\tilde{w}=u^{\xi^{*}}-u$. Then the monotonicity of $f$ yields that
\begin{equation*}
\tilde{w}_{t}\geq \tilde{w}_{xx}+\frac{f(t,x,u^{\xi^{*}})-f(t,x,u)}{u^{\xi^{*}}-u}\tilde{w}\ \text{in}\ \Omega_{h}.
\end{equation*}
Moreover, $\tilde{w}(t,x)\geq0$ in $\Omega_{h}$, and $\tilde{w}(t_{n},x_{n})\to 0$ as $n\to\infty$.
We can obtain a contradiction by
using a proof similar to the one in the proof of Proposition \ref{prop3.1}.
The proof of claim is complete.

By the claim in this step, we can find a constant $\xi_{0}>0$ such that
\begin{equation}\label{3.4}
\inf\limits_{-B<x-h(t)\leq0}\{u^{\xi^{*}-\xi}(t,x)-u(t,x)\}>0\ \forall \xi\in[0,\xi_{0}].
\end{equation}
Note that $u^{\xi^*}(t,x)\geq u(t,x)>p(t,x)-\frac{\theta}{2}$ for $x-h(t)\leq-B$.
Then for $\xi_{0}$ small, $u^{\xi^*-\xi}(t,x)>p(t,x)-\theta$ for any $\xi\in[0,\xi_{0}]$,
$x-h(t)\leq-B$ since $u_{x}$ is uniformly bounded. Setting
$$\tilde{\varepsilon}^{*}=\inf\{\varepsilon>0: u^{\xi^*-\xi}(t,x)+\varepsilon\geq u(t,x)\ \forall\ t\in\Real, x-h(t)\leq-B\},$$
we can still prove that $\tilde{\varepsilon}^{*}=0$ as we did in Step 1.
This and \eqref{3.4} imply that
$u^{\xi^*-\xi}(t,x)\geq u(t,x)$ in $\Omega_{h}$ for any $\xi\in[0,\xi_{0}]$,
which contradicts the definition of $\xi^*$. Hence $\xi^*=0$, i.e., for any $\xi\geq0$,
$$u^{\xi}(t,x)\geq u(t,x)\ \text{for}\ (t,x)\in\Omega_{h}.$$
Hence $u(t,x)$ is decreasing in $x$.
\end{proof}

\begin{prop}\label{proptimeshift}Suppose that $p$ is bounded.
Let $(u(t,x),h(t))$ be a bounded transition semi-wave of \eqref{1.1} which connects $p$ and $0$ with $\inf\limits_{(t,x)\in\Omega_{h}}p(t,x)>0$.
Assume that there is $\theta>0$ such that $u\to f(t,x,u)$ is decreasing
in $[p(t,x)-\theta,+\infty)$ for all $(t,x)\in\Real^{2}$.
If $p(t,x)$ and $f(t,x,u)$ are increasing in $t$, then $u(t,x)$ is increasing in $t$. Specially, if $p(t,x)$ and $f(t,x,u)$ are independent of $t$, then $u(t,x)$ is increasing in $t$.
\end{prop}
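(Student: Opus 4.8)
The plan is to mirror the proof of Proposition~\ref{propspaceshift}, replacing the spatial shift $u^{\xi}(t,x)=u(t,x-\xi)$ by the time shift $u_{\tau}(t,x):=u(t+\tau,x)$ for $\tau>0$, and using monotonicity of $f$ in $t$ (together with its monotonicity in $u$) where the spatial proof used monotonicity in $x$. The first point to record is that $h$ is strictly increasing with $m_{0}:=\inf_{t\in\Real}h^{\prime}(t)>0$ by Proposition~\ref{prop_prioriestimate}; hence $h(t+\tau)\geq h(t)+m_{0}\tau$, so $u_{\tau}$ is well defined on a domain containing $\Omega_{h}$, and the transition-wave tail estimate is available at the advanced time $t+\tau$. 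A direct computation gives $(u_{\tau})_{t}=(u_{\tau})_{xx}+f(t+\tau,x,u_{\tau})$, and combining $f(t+\tau,x,\cdot)\geq f(t,x,\cdot)$ (monotonicity in $t$) with monotonicity in $u$ on $[p-\theta,+\infty)$ shows that, wherever $u_{\tau}>p-\theta$, the function $u_{\tau}+\varepsilon$ is a supersolution of $w_{t}=w_{xx}+f(t,x,w)$. This is the exact analogue of the supersolution property exploited in Proposition~\ref{propspaceshift}.

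In Step~1 I would produce $T_{0}>0$ with $u_{\tau}\geq u$ on $\Omega_{h}$ for all $\tau\geq T_{0}$. Fix $B>0$ from the transition-wave property so that $u(t,x)>p(t,x)-\theta/2$ whenever $x-h(t)<-B$. For $\tau\geq T_{0}:=B/m_{0}$ and $(t,x)\in\Omega_{h}$ one has $x<h(t)\leq h(t+\tau)-B$, whence $u_{\tau}(t,x)=u(t+\tau,x)>p(t+\tau,x)-\theta/2\geq p(t,x)-\theta/2$, using that $p$ is increasing in $t$; in particular $u_{\tau}>p-\theta$ throughout $\Omega_{h}$, so $u_{\tau}+\varepsilon$ is a supersolution everywhere. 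I would then set $\varepsilon^{*}=\inf\{\varepsilon>0:u_{\tau}+\varepsilon\geq u\ \text{in}\ \Omega_{h}\}$ and argue $\varepsilon^{*}=0$ exactly as in Step~1 of Proposition~\ref{propspaceshift}: near the boundary $u_{\tau}+\tfrac12\varepsilon^{*}>u$ because $u$ vanishes on $x=h(t)$ with $u_{x}$ bounded; the minimizing sequence $\{x_{n}-h(t_{n})\}$ is bounded, for otherwise $x_{n_{k}}-h(t_{n_{k}})\to-\infty$ along a subsequence, whence both $u$ and $u_{\tau}$ approach $p$ and monotonicity of $p$ in $t$ forces $\lim_{k}(u-u_{\tau})\leq0$, contradicting the defect $u-u_{\tau}>\varepsilon_{n}>\varepsilon^{*}/2$; finally the sliding-rectangle (parabolic sweeping) argument of Proposition~\ref{prop3.1} yields the same contradiction.

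Step~2 defines $\tau^{*}=\inf\{\tau>0:u_{\tau^{\prime}}\geq u\ \text{in}\ \Omega_{h}\ \forall\,\tau^{\prime}\geq\tau\}\in[0,T_{0}]$ and shows $\tau^{*}=0$, which is exactly the claimed monotonicity $u(t+\tau,x)\geq u(t,x)$ for all $\tau\geq0$. Assuming $\tau^{*}>0$, Proposition~\ref{prop3.2} yields strict positivity near the free boundary, since $u_{\tau^{*}}(t,h(t))=u(t+\tau^{*},h(t))\geq\inf_{x-h(s)\leq-m_{0}\tau^{*}}u>0=u(t,h(t))$, and a sweeping argument upgrades this to $\inf_{-B<x-h(t)\leq0}\{u_{\tau^{*}}-u\}>0$. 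Uniform continuity of $u$ in $t$ (bounded $u_{t}$ from Proposition~\ref{prop_prioriestimate}) preserves this strict gap and keeps $u_{\tau^{*}-\tau}>p-\theta$ on $\{x-h(t)\leq-B\}$ for all small $\tau$; a further $\tilde{\varepsilon}^{*}$-argument confined to the half-domain $\{x-h(t)\leq-B\}$ then gives $u_{\tau^{*}-\tau}\geq u$ there as well, contradicting the minimality of $\tau^{*}$. The special case in which $p$ and $f$ are independent of $t$ is covered since constants are (weakly) increasing.

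The main obstacle, and the only place the time version genuinely departs from the spatial one, is guaranteeing that time translation enlarges the free-boundary domain and keeps the shifted solution inside the region $\{u_{\tau}>p-\theta\}$ where $f$ is monotone: this rests on $m_{0}=\inf_{t}h^{\prime}(t)>0$ combined with the simultaneous use of monotonicity of $f$ in both $t$ and $u$ to secure the supersolution property. Once these are in hand, the sweeping estimates are word-for-word those already established in Propositions~\ref{prop3.1} and \ref{propspaceshift}, so what remains is bookkeeping rather than new analysis.
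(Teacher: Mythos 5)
Your proposal is correct and follows exactly the route the paper intends: the paper's own proof of Proposition \ref{proptimeshift} is just a two-step outline deferring to Proposition \ref{propspaceshift} (time shift $u^{\tau}$, Step 1 for large $\tau$, then $\tau^{*}=0$), and you have filled in the same outline, correctly identifying the two genuine points of departure — the use of $\inf_{t}h^{\prime}(t)>0$ in Step 1 and the boundedness of $u_{t}$ in Step 2 — which is precisely what the paper's Remark 3.1 flags as the needed modifications. No gaps.
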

\begin{proof}
The proof is similar to that of Proposition \ref{propspaceshift}. So we only provide the outline of the proof.
Denote $u^{\tau}(t,x)=u(t+\tau,x)$. In the first step, we show that
there is $T>0$ such that for any $\tau\geq T,$
\begin{equation}\label{timeshift}
u^{\tau}(t,x)\geq u(t,x)\ \text{for}\  (t,x)\in\Omega_{h}.
\end{equation}
Then we define
$$\tau^{*}=\inf\{\tau>0: u^{\tau^{\prime}}(t,x)\geq u(t,x)\ \forall t\in\Real, x\leq h(t),
\tau^{\prime}\geq\tau\},$$
and show that $\tau^{*}=0$ in the second step.
\end{proof}

\begin{rem}\label{re3.1}
In Proposition \ref{prop3.1}, we do not need the requirement that $p$ is bounded
if we assume that $\sup\limits_{(t,x,u)\in\mathbf{R^{3}}} \big|f_{u}^{\prime}(t,x,u)\big|<+\infty$.
We do not need $\inf\limits_{t\in\mathbb{R}}h^{\prime}(t)>0$ in the Step 1 of Proposition \ref{propspaceshift},
but we need it in the first step of Proposition \ref{proptimeshift}.
We need the boundedness of $u_{x}$ in the Step 2 of Proposition \ref{propspaceshift},
while in the second step of Proposition \ref{proptimeshift} we need the boundedness of $u_{t}$.
\end{rem}

Next, we will prove the uniqueness of the global mean speed among a certain class of transition semi-waves:
\begin{thm}\label{thm3.1}Suppose that $p$ is bounded.
Let $(u,h)$ and $(\tilde{u},\tilde{h})$ be two bounded transition semi-waves of \eqref{1.1}. Both of them connect $p$ and $0$.
Suppose that $p$ and $f$ are independent of $t$ and $\inf\limits_{x\in\mathbb{R}}p(x)>0$.
We further assume that there is $\theta>0$ such that $u\to f(x,u)$ is
decreasing in $[p(x)-\theta,+\infty)$ for all $x\in\Real$,
and that both $u$ and $\tilde{u}$ have global mean speeds $c$ and $\tilde{c}$, respectively, with the stronger properties that
$$\sup\limits_{(t,s)\in\Real^2}|h(t)-h(s)-c(t-s)|<+\infty,$$
$$\sup\limits_{(t,s)\in\Real^2}|\tilde{h}(t)-\tilde{h}(s)-\tilde{c}(t-s)|<+\infty.$$
Then $c=\tilde{c}$ and there is (the smallest) $s_{*}\in\Real$ such that
$$\tilde{u}(t+s_{*},x)\geq u(t,x)\ \text{for any}\ x\leq h(t).$$
Furthermore, there exists a sequence $\{t_{n},x_{n}\}_{n\in\mathbb{N}}$ with $x_{n}-h(t_{n})$ bounded
such that
$$\tilde{u}(t_{n}+s_{*},x_{n})-u(t_{n},x_{n})\to0\ \text{as}\ n\to\infty.$$
Lastly, either $\tilde{u}(t+s_{*},x)>u(t,x)\ \text{for any}\ x\leq h(t)<\tilde{h}(t+s_{*})$
or $\tilde{u}(t+s_{*},x)=u(t,x)$ and $h(t)=\tilde{h}(t+s_{*}).$
\end{thm}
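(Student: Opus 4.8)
The plan is to run a sliding method in the time variable, exploiting that, since $p$ and $f$ do not depend on $t$, every time translate $\tilde u(t+\tau,x)$ of the second wave is again a solution of \eqref{1.1} on the shifted domain $\{x<\tilde h(t+\tau)\}$. Two structural facts from the earlier propositions drive everything. First, Proposition \ref{prop3.1} gives $u<p$ and $\tilde u<p$ throughout their domains. Second -- the feature that makes the $t$-independent case special -- Proposition \ref{proptimeshift} applies to \emph{both} waves (its hypotheses are exactly those assumed here), so $u$ and $\tilde u$ are nondecreasing in $t$; combined with \eqref{defofTranW} this yields, for each fixed $x$, that $u(t,x)\uparrow p(x)$ and $\tilde u(t,x)\uparrow p(x)$ as $t\to+\infty$. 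Finally, Proposition \ref{prop_prioriestimate} gives $\inf_t h'>0$, so $c,\tilde c>0$, and the bounded-oscillation hypotheses say $h(t)=ct+O(1)$, $\tilde h(t)=\tilde c\,t+O(1)$.

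First I would show that for all $\tau$ large enough, $\tilde u(t+\tau,x)\ge u(t,x)$ for every $(t,x)\in\Omega_h$. For this to be meaningful one needs domain containment, $\tilde h(t+\tau)\ge h(t)$ for all $t\in\Real$; writing $\tilde h(t+\tau)-h(t)=(\tilde c-c)t+\tilde c\,\tau+O(1)$ and letting $t\to\pm\infty$ shows this can hold for \emph{all} $t$ only if $c=\tilde c$, after which it holds for $\tau$ large. Thus establishing the domination uniformly in $t$ is precisely the mechanism forcing $c=\tilde c$. To produce the domination I would mimic Step~1 of Proposition \ref{propspaceshift}, replacing the space shift by a time shift: set $\varepsilon^{*}=\inf\{\varepsilon>0:\ \tilde u(t+\tau,x)+\varepsilon\ge u(t,x)\ \text{on}\ \Omega_h\}$ and suppose $\varepsilon^{*}>0$. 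Near the free boundary $u$ is small while $\tilde u(t+\tau,\cdot)$ is close to $p$, so any near-contact sequence stays a definite distance behind the front; the monotonicity of $f$ on $[p-\theta,+\infty)$ then makes $\tilde w:=\tilde u(\cdot+\tau,\cdot)+\varepsilon^{*}-u$ a supersolution of a linear parabolic inequality, and the box-chain argument of Proposition \ref{prop3.1} propagates the vanishing of $\tilde w$ from the interior to the boundary, a contradiction, so $\varepsilon^{*}=0$. This is the \textbf{main obstacle}: the delicate points are controlling the ``shoulder'' region uniformly in $t$ while two distinct free boundaries move and running the box-chain principle in this two-wave setting, and it is exactly here that the bounded-oscillation hypotheses are indispensable.

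With $c=\tilde c$ fixed, I would define
\[
s_{*}=\inf\{\tau:\ \tilde u(t+\tau',x)\ge u(t,x)\ \forall t\in\Real,\ x\le h(t),\ \forall\tau'\ge\tau\},
\]
which is finite by the previous step and bounded below because domain containment fails once $\tau$ is too negative. The uniform estimates of Proposition \ref{prop_prioriestimate} and continuity give $\tilde u(t+s_{*},x)\ge u(t,x)$ on $\Omega_h$. Minimality then forces a contact sequence: writing $w:=\tilde u(\cdot+s_{*},\cdot)-u\ge0$, if $\inf w$ over every strip $\{-A\le x-h(t)\le0\}$ were positive, then -- as in Step~2 of Proposition \ref{propspaceshift}, using Proposition \ref{prop3.2} deep in the interior where both waves are near $p$ together with the monotonicity of $f$ -- one could lower $s_{*}$, a contradiction. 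Hence there is $\{(t_n,x_n)\}$ with $x_n-h(t_n)$ bounded and $w(t_n,x_n)\to0$. The real content is that the contact occurs at \emph{bounded} distance from the front: since $u,\tilde u\to p$ as $x-h(t)\to-\infty$ one always has $w\to0$ there, so this trivial contact at $-\infty$ must be upgraded, and it is precisely the minimality of $s_{*}$ that does so.

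Finally, on $\Omega_h$ the function $w\ge0$ solves $w_t\ge w_{xx}+\gamma(t,x)\,w$ with $\gamma$ bounded (the difference quotient of $f$, as in Proposition \ref{prop3.1}). By the strong maximum principle either $w\equiv0$ or $w>0$ in the interior. If $w\equiv0$ then $u\equiv\tilde u(\cdot+s_{*},\cdot)$ and the free boundaries coincide, $h(t)=\tilde h(t+s_{*})$. If $w>0$ in the interior, I claim $h(t)<\tilde h(t+s_{*})$ for all $t$: otherwise $t\mapsto h(t)-\tilde h(t+s_{*})\le0$ attains the value $0$ at some $t_1$, hence has vanishing derivative there, so by the free boundary conditions $w_x(t_1,h(t_1))=\tfrac1\mu\big(h'(t_1)-\tilde h'(t_1+s_{*})\big)=0$, contradicting Hopf's lemma applied to $w$ at the boundary point $(t_1,h(t_1))$. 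This yields the stated dichotomy, completing the proof.
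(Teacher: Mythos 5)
Your overall architecture (slide in time, define a minimal shift $s_{*}$, extract a contact sequence at bounded distance from the front, conclude with the strong maximum principle and Hopf's lemma) matches the paper's, and your treatment of the dichotomy at the end is essentially the paper's Case~1 of Step~3. However, there is a genuine gap at the very first and most important step: the proof that $c=\tilde{c}$. Your argument is circular. You propose to \emph{establish} the domination $\tilde{u}(t+\tau,x)\ge u(t,x)$ on $\Omega_{h}$ for large $\tau$, observe that such domination forces domain containment $\tilde{h}(t+\tau)\ge h(t)$ for all $t$, and note that containment for all $t$ is possible only if $c=\tilde{c}$. But if $c\neq\tilde{c}$ the domination is simply \emph{false} for every fixed $\tau$ (say $\tilde{c}<c$: then $\tilde{h}(t+\tau)-h(t)\to-\infty$ as $t\to+\infty$, and at points $x$ with $\tilde{h}(t+\tau)<x<h(t)$ far behind $h(t)$ one has $u(t,x)$ close to $p(x)\ge\inf p>0$ while $\tilde{u}(t+\tau,x)=0$, so your $\varepsilon^{*}$ is bounded below by $\inf p>0$ and the box-chain argument cannot conclude). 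The failure of your sliding scheme under the assumption $c\neq\tilde{c}$ is not a contradiction with anything; it merely means the scheme does not apply, so no conclusion about $c$ and $\tilde{c}$ follows. You never derive a contradiction from $c\neq\tilde{c}$.

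The paper closes this gap with a device you are missing: assuming $\tilde{c}<c$, it rescales time and sets $v(t,x)=\tilde{u}(\tfrac{c}{\tilde{c}}t,x)$, $g(t)=\tilde{h}(\tfrac{c}{\tilde{c}}t)$. Because $\tfrac{c}{\tilde{c}}>1$ and $\tilde{u}_{t}\ge0$ (Proposition \ref{proptimeshift}), the pair $(v,g)$ is a \emph{supersolution} whose front now has global mean speed exactly $c$, so $|g(t)-h(t)|$ is bounded and the sliding argument applies to $v$ against $u$. The minimal shift $s_{*}$ then exists, and the contradiction is extracted in two pieces: the touching case $v^{s_{*}}\equiv u$ is excluded because the strict inequality $v_{t}\ge\tfrac{\tilde{c}}{c}v_{t}$ would force $u_{t}\equiv0$; and Steps~3--4 (Hopf's lemma at the free boundary, a compactness argument when the infimum of $g^{s_{*}}-h$ is not attained, and a further slide) show $s_{*}$ can be decreased, contradicting minimality. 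Hence $\tilde{c}\ge c$, and symmetry gives equality. Without this rescaling (or some substitute producing a comparable object moving at speed $c$ out of $\tilde{u}$), your proof of $c=\tilde{c}$ does not go through; the remaining conclusions, which you prove only after assuming $c=\tilde{c}$, are therefore not yet established either. A secondary, smaller issue: when you lower $s_{*}$ in the penultimate step you need $\inf_{t}\{\tilde{h}(t+s_{*})-h(t)\}>0$ (not merely pointwise positivity), which requires the limiting/compactness argument of the paper's Case~2 of Step~3 that your sketch omits.
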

\begin{proof}
We will follow the convention that $u(t,x)=0$ for $x>h(t)$ and $\tilde{u}(t,x)=0$ for $x>\tilde{h}(t).$
First, notice that $\tilde{c}$ and $c$ are strictly positive since $\inf\limits_{t\in\mathbb{R}} h^{\prime}(t)>0$ and $\inf\limits_{t\in\mathbb{R}} \tilde{h}^{\prime}(t)>0$.
We want to show that $\tilde{c}\geq c$. We prove it in four steps. Suppose that  $\tilde{c}<c$.\\
Step 1: Let $v(t,x)=\tilde{u}(\frac{c}{\tilde{c}}t,x), g(t)=\tilde{h}(\frac{c}{\tilde{c}}t).$
Note that $c/\tilde{c}>1$, $\tilde{u}_{x}(\frac{c}{\tilde{c}}t,\tilde{h}(\frac{c}{\tilde{c}}t))>0$,
and $v_{t}(t,x)=\frac{c}{\tilde{c}}\tilde{u}_{t}(\frac{c}{\tilde{c}}t,x)\geq0$ by Proposition \ref{proptimeshift}.
Then $(v,g)$ satisfies
\begin{equation}\label{t3.1.1}
\left\{
   \begin{aligned}
  v_{t}\geq \frac{\tilde{c}}{c}v_{t}=v_{xx}+f(x,v),\ \ &t>0,\ x<g(t),\\
   v(t,g(t))=0,\ g^{\prime}(t)\geq-\mu v_{x}(t,g(t)),\ \ &t>0.\\
   \end{aligned}
   \right.
\end{equation}
Hence $(v,g)$, as well as all its time-shifts, is a upper-solution of \eqref{4.2}. Moreover,
it is easy to find that
$$\lim\limits_{x\to-\infty}|v(t,x+g(t))-p(t,x+g(t))|=0\ \text{uniformly in}\ t\in\Real,$$
and
$$\sup\limits_{(t,s)\in\Real^2}|g(t)-g(s)-c(t-s)|=
\sup\limits_{(t,s)\in\Real^2}|\tilde{h}(\frac{c}{\tilde{c}}t)-\tilde{h}(\frac{c}{\tilde{c}}s)-
\tilde{c}(\frac{c}{\tilde{c}}t-\frac{c}{\tilde{c}}s)|<+\infty.$$
Set $v^{s}(t,x)=v(t+s,x), g^{s}(t)=g(t+s).$ Then $(v^{s},g^{s})$ still satisfies \eqref{t3.1.1}.\\
Step 2: Show that there exists $s_{*}>-\infty$ such that $v^{s_{*}}(t,x)\geq u(t,x).$ Moreover, $v^{s_{*}}(t,x)>u(t,x)$ for $x<h(t).$\\
Note that $|g(t)-g(s)-(h(t)-h(s))|\leq|g(t)-g(s)-c(t-s)|+|h(t)-h(s)-c(t-s)|<+\infty$.
Then $|g(t)-h(t)|$ is bounded. We can find $B>0$ such that
$u(t,x)>p(x)-\frac{\theta}{2}$ for any $x-h(t)<-B$ and
$v(t,x)>p(x)-\frac{\theta}{2}$ for any $x-g(t)<-B$.
Taking $s_{0}>0$ large, say $s_{0}>\frac{\sup\limits_{t\in\mathbb{R}}|g(t)-h(t)|+B}{\inf\limits_{t\in\mathbb{R}} g^{\prime}(t)}$, we have
$g^{s}(t)\geq h(t)+B$ for any $s\geq s_{0}$. Therefore, $v^{s}(t,x)>p(x)-\frac{\theta}{2}$ for any $x<h(t).$
Using an argument similar to the one in the proof of Step 1 in Proposition \ref{propspaceshift},
we have $v^{s_{0}}(t,x)\geq u(t,x)$.
Set $s_{*}=\inf\{s\in\mathbb{R}: v^{\tau}(t,x)\geq u(t,x)\ \text{for}\ x\leq h(t), \tau\geq s\}$.
Then $s_{*}>-\infty$. In fact, if there exits a sequence $\{s_n\}_{n\in\mathbb{N}}$ with $s_{n}\to-\infty$ such that
$v^{s}(t,x)> u(t,x)\ \text{for}\ x\leq h(t)$, then $v(s_{n},x_{0})>u(0,x_{0})>0$ for any $x_{0}<h(0)$.
But this contradicts $v(s_{n},x_{0})=0$ for $n$ large since $\lim\limits_{n\to\infty}h(s_{n})=-\infty$. Hence $v^{s_{*}}(t,x)\geq u(t,x),$ and $g^{s_{*}}(t)\geq h(t)$.
If $v^{s_{*}}(s,y)=u(s,y)$ for some $y<h(s)$, then by strong maximum principle we have $v^{s_{*}}(t,x)\equiv u(t,x)$. Therefore,
$$\frac{\tilde{c}}{c}v^{s_{*}}_{t}=v^{s_{*}}_{xx}+f(x,v^{s_{*}}),\ x<h(t)$$
and
$$u_{t}=u_{xx}+f(x,u),\ x<h(t)$$
imply that $(1-\frac{\tilde{c}}{c})u_{t}(t,x)=0$. Hence $u_{t}(t,x)\equiv0$, i.e., $u$ is independent of $t$. That is impossible.\\
Step 3: Show that $\inf\limits_{t\in\mathbb{R}}\{g^{s_{*}}(t)-h(t)\}>0$.\\
Suppose that $\inf\limits_{t\in\mathbb{R}}\{g^{s_{*}}(t)-h(t)\}=0$. Then there are two cases we need to consider:\\
Case 1: There exists $t_{0}$ such that $g^{s_{*}}(t_{0})-h(t_{0})=0$.\\
Note that $g^{s_{*}}(t)-h(t)\geq0$ for any $t\in\Real$.
Then $(g^{s_{*}})^{\prime}(t_{0})-h^{\prime}(t_{0})=0$. Hence
\begin{equation}\label{t3.1.2}
-\mu v^{s_{*}}_{x}(t_{0},g^{s_{*}}(t_{0}))\leq (g^{s_{*}})^{\prime}(t_{0})=h^{\prime}(t_{0})=-\mu u_{x}(t_{0},h(t_{0})).
\end{equation}
On the other hand, by Step 2 and Hopf's Lemma,
we have $(v^{s_{*}}-u)_{x}(t_{0},h(t_{0}))<0$, i.e.,
$v^{s_{*}}_{x}(t_{0},g^{s_{*}}(t_{0}))<u_{x}(t_{0},h(t_{0})),$ which contradicts \eqref{t3.1.2}.
Thus Case 1 can not occur.\\
Case 2: There exists $\{t_{n}\}_{n\in\mathbb{N}}$ such that $\lim\limits_{n\to\infty}\big(g^{s_{*}}(t_{n})-h(t_{n})\big)=0$.\\
Let $$v^{s_{*}}_{n}(t,x)=v^{s_{*}}(t+t_{n},x+g^{s_{*}}(t_{n})),\ g^{s_{*}}_{n}(t)=g^{s_{*}}(t+t_{n})-g^{s_{*}}(t_{n}),$$
$$u_{n}(t,x)=u(t+t_{n},x+h(t_{n})),\ h_{n}(t)=h(t+t_{n})-h(t_{n}),$$
Then $\liminf\limits_{n\to\infty}(g^{s_{*}}_{n}(t)-h_{n}(t))\geq0$ for any $t\in\mathbb{R}$
and $\lim\limits_{n\to\infty}(g^{s_{*}}_{n}(0)-h_{n}(0))=0.$
By the priori estimates, we can find some subsequences of $\{(v^{s_{*}}_{n},g^{s_{*}}_{n})\}_{n\in\mathbb{N}}$ and $\{(u_{n},h_{n})\}_{n\in\mathbb{N}}$, still denoted by $\{(v^{s_{*}}_{n},g^{s_{*}}_{n})\}_{n\in\mathbb{N}}$ and $\{(u_{n},h_{n})\}_{n\in\mathbb{N}}$,
$g^{s_{*}}_{\infty}\in C^{1}_{loc}(\Real), h_{\infty}\in C^{1}_{loc}(\Real)$,
$v^{s_{*}}_{\infty}\in C^{1,2}_{loc}(\{(t,x): t\in\Real,x\leq g^{s_{*}}_{\infty}(t)\})$,
and $u_{\infty}\in C^{1,2}_{loc}(\{(t,x): t\in\Real,x\leq h_{\infty}(t)\})$
such that
$$g^{s_{*}}_{n}\to g^{s_{*}}_{\infty}\ \text{in}\ C^{1}_{loc}(\Real),\
v^{s_{*}}_{n}\to v^{s_{*}}_{\infty}\ \text{in}\ C^{1,2}_{loc}(\{(t,x): t\in\Real,x< g^{s_{*}}_{\infty}(t)\}),$$
$$h_{n}\to h_{\infty}\ \text{in}\ C^{1}_{loc}(\Real),\
u_{n}\to u_{\infty}\ \text{in}\ C^{1,2}_{loc}(\{(t,x): t\in\Real,x< h_{\infty}(t)\}).$$
Moreover, $v^{s_{*}}_{\infty}(t,x)\geq u_{\infty}(t,x)$, $g^{s_{*}}_{\infty}(t)-h_{\infty}(t))\geq0$ for any $t\in\mathbb{R}$
and $(g^{s_{*}}_{\infty}(0)-h_{\infty}(0))=0$.
Furthermore, there exists a subsequence of $\{f(\cdot+g^{s_{*}}(t_{n}),s)\}$, still denoted by $\{f(\cdot+g^{s_{*}}(t_{n}),s)\}$,
such that $f(\cdot+g^{s_{*}}(t_{n}),s)\to f_{\infty}(\cdot,s)$ in $C^{\alpha^{\prime}}_{loc}(\mathbb{R})$ locally in $s\in\Real$,
where $f_{\infty}(\cdot,s)\in C^{\alpha^{\prime}}_{loc}(\mathbb{R})$ with $\alpha^{\prime}<\alpha$.
The same conclusion is still valid for $f(\cdot+h(t_{n}),s)$, i.e.,
$f(\cdot+h(t_{n}),s)\to \tilde{f}_{\infty}(\cdot,s)$ in $C^{\alpha^{\prime}}_{loc}(\mathbb{R})$ locally in $s\in\Real$ for some $\tilde{f}_{\infty}(\cdot,s)\in C^{\alpha^{\prime}}_{loc}(\mathbb{R})$.
Since $\lim\limits_{n\to\infty}\big(g^{s_{*}}(t_{n})-h(t_{n})\big)=0$, we have $f_{\infty}(\cdot,s)=\tilde{f}_{\infty}(\cdot,s)$.
We also have
\begin{equation*}
\left\{
   \begin{aligned}
  (v^{s_{*}}_{\infty})_{t}\geq \frac{\tilde{c}}{c}(v^{s_{*}}_{\infty})_{t}=
  (v^{s_{*}}_{\infty})_{xx}+f_{\infty}(x,v^{s_{*}}_{\infty}),\ &t>0, x<g^{s_{*}}_{\infty}(t),\\
   v^{s_{*}}_{\infty}(t,g^{s_{*}}_{\infty}(t))=0,\ (g^{s_{*}}_{\infty})^{\prime}(t)\geq-\mu v_{x}(t,g^{s_{*}}_{\infty}(t)),\ \ &t>0,\\
   \end{aligned}
   \right.
\end{equation*}
and
\begin{equation*}
\left\{
   \begin{aligned}
  (u_{\infty})_{t}=(u_{\infty})_{xx}+f_{\infty}(x,u_{\infty}),\ \ &t>0,\ x<h_{\infty}(t),\\
   u_{\infty}(t,h_{\infty}(t))=0,\ h_{\infty}^{\prime}(t)=-\mu (u_{\infty})_{x}(t,h_{\infty}(t)),\ \ &t>0.\\
   \end{aligned}
   \right.
\end{equation*}
This is the same situation as Case 1, which can not occur either.
Hence $\inf\limits_{t\in\mathbb{R}}\{g^{s_{*}}(t)-h(t)\}>0$.\\
Step 4: End the proof by obtaining a contradiction.\\
Claim: We have $\inf\limits_{ -B\leq x-h(t)\leq0}\{v^{s_{*}}(t,x)-u(t,x)\}>0.$\\
Proof of Claim: If not, then there exists a sequence $\{(t_{n},x_{n})\}_{n\in\mathbb{N}}$ with $-B\leq x_{n}-h(t_{n})\leq0$
such that $v^{s_{*}}(t_{n},x_{n})-u(t_{n},x_{n})\to0$ as $n\to\infty$.
Note that $\inf\limits_{t\in\mathbb{R}}v^{s_{*}}(t,h(t))>0=v(t,h(t))$ since $\inf\limits_{t\in\mathbb{R}}\{g^{s_{*}}(t)-h(t)\}>0$
and $v^{s_{*}}_{xx}$ is uniformly bounded.
Then there exists $\sigma,\kappa>0$ such that
$v^{s_{*}}(t,x)-u(t,x)>\sigma$ for $-\kappa\leq x-h(t)\leq0.$
Hence we may assume $-B\leq x_{n}-h(t_{n})<-\kappa$. Let $w=v^{s_{*}}-u$.
Then $w$ satisfies
\begin{equation}\label{t3.1.3}
w_{t}\geq w_{xx}+\frac{f(x,v^{s_{*}})-f(x,u)}{v^{s_{*}}-u}w\ \text{for}\ x-h(t)<0,
\end{equation}
$w(t,x)\geq0$ for $x-h(t)\leq0$, and $w(t_{n},x_{n})\to 0$ as $n\to\infty$.
Now take the same notations $\rho, K\in\mathbb{N}, x_{n,i},$ and $E_{n,i}$ as the proof of Proposition \ref{prop3.1}.
By the same arguments, we have a contradiction as before.
Hence $\inf\limits_{-B\leq x-h(t)\leq0}\{v^{s_{*}}(t,x)-u(t,x)\}>0.$

Now by the claim above, there exists $s_{0}>0$ small such that
\begin{equation}\label{t3.1.4}
v^{s_{*}-s}(t,x)\geq u(t,x)\ \text{for any}\ s\in[0,s_{0}], -B\leq x-h(t)\leq0.
\end{equation}
Note that $v^{s_{*}}(t,x)\geq1-\frac{\theta}{2}$ for any $ x-h(t)\leq-B.$
Then for $s_{0}$ sufficiently small,
$$v^{s_{*}-s_{0}}(t,x)\geq1-\theta$$
for any $s\in[0,s_{0}], x-h(t)\leq-B$ since $v^{s_{*}}_{x}$ is uniformly bounded. Now setting
$$\varepsilon^{*}=\inf\{\varepsilon>0: v^{s_{*}-s}(t,x)+\varepsilon\geq u(t,x)\ \forall t\in\Real, x-h(t)\leq-B\},$$
we have $\varepsilon^{*}=0$ by the same arguments as used in the proof of Proposition \ref{propspaceshift}.
Then $v^{s_{*}-s}(t,x)\geq u(t,x)$ for any $s\in[0,s_{0}], x-h(t)\leq-B.$
Hence together with \eqref{t3.1.4},
we have $v^{s_{*}-s}(t,x)\geq u(t,x)$ for any $s\in[0,s_{0}], x-h(t)\leq0,$
which contradicts the definition of $s_{*}$.
Therefore, $\tilde{c}\geq c$.

The other inequality $\tilde{c}\leq c$ follows by reversing the roles of $u$ and $\tilde{u}$.
Thus $\tilde{c}=c$. Moreover, the above arguments also imply other conclusions of the theorem.
\end{proof}

Under some assumptions on $f$ and $p$, the free boundary somehow reflects the location of level set of $u$.
Namely, we have
\begin{thm}\label{thm3.2}
Let $(u,h)$ be an entire solution of \eqref{1.1}. Assume that $f(t,x,p)\equiv0$ for some positive constant $p$ and that $\{u(t,x): t\in\mathbb{R}, x<h(t)\}=(0,p)$.
Then $(u,h)$ is a transition semi-wave of \eqref{1.1} which connects $p$ and $0$ if and only if
the following hold:\\
1) $\forall \lambda\in(0,p),\ \sup\{|x-h(t)|: u(t,x)=\lambda\}<+\infty,$\\
2) $\forall C\geq0,\ \sup\{u(t,x): |x-h(t)|\leq C\}<p.$
\end{thm}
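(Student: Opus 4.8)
Since $p$ is a constant, the defining relation \eqref{defofTranW} of Definition \ref{def2.1} reduces to the assertion that $u(t,x)\to p$ uniformly in $t$ as $x-h(t)\to-\infty$ (equivalently, that $w(t,x):=u(t,x+h(t))\to p$ uniformly in $t$ as $x\to-\infty$). I would prove the two implications separately: the necessity of 1) is elementary, the necessity of 2) rests on a compactness-plus-maximum-principle argument, and the sufficiency of 1)--2) is a soft topological argument exploiting the connectedness of the deep interior.

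For necessity, assume $(u,h)$ is a transition semi-wave. To obtain 1), fix $\lambda\in(0,p)$ and apply the uniform limit with $\varepsilon=p-\lambda$: there is $M>0$ with $u(t,x)>\lambda$ whenever $x-h(t)<-M$. Since $u=\lambda>0$ forces $x<h(t)$, every point of $\{u=\lambda\}$ satisfies $-M\le x-h(t)<0$, so $\sup\{|x-h(t)|:u(t,x)=\lambda\}\le M$. To obtain 2), I would argue by contradiction: if it failed for some $C$, there would be $(t_n,x_n)$ with $-C\le x_n-h(t_n)\le0$ and $u(t_n,x_n)\to p$. Using the a priori bounds of Proposition \ref{prop_prioriestimate}, the translates $w_n(t,x)=w(t+t_n,x)$ would converge in $C^{1,2}_{loc}$ to a limit $w_\infty\le p$ solving a limiting equation whose reaction term vanishes at $p$ (because $f(t,x,p)\equiv0$), with $w_\infty(t,0)=0$ and $w_\infty(0,x_*)=p$ for some $x_*\in[-C,0]$. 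If $x_*=0$, the boundedness of $w_x$ from Proposition \ref{prop_prioriestimate} would force $u(t_n,x_n)\to w(t_n,0)=0$, a contradiction; if $x_*<0$, then $p-w_\infty\ge0$ solves a linear parabolic equation with bounded coefficients and vanishes at the interior point $(0,x_*)$, so the strong maximum principle gives $w_\infty\equiv p$, contradicting $w_\infty(t,0)=0$.

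For sufficiency, assume 1) and 2). Fix $\lambda\in(0,p)$, set $R:=\sup\{|x-h(t)|:u(t,x)=\lambda\}<\infty$, and consider the deep interior $D=\{(t,x):x<h(t)-R\}$. By 1) no point of $D$ lies on $\{u=\lambda\}$, and $D$ is path-connected (any two of its points are joined by vertical and horizontal segments staying below the curve $h(t)-R$, using only the continuity of $h$). Hence either $u>\lambda$ on all of $D$ or $u<\lambda$ on all of $D$. The latter is impossible: on the strip $\{|x-h(t)|\le R\}$ condition 2) gives $\sup u<p$, so $u<\lambda$ on $D$ would yield $\sup_{\Omega_h}u<p$, contradicting that the range of $u$ equals $(0,p)$. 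Therefore $u(t,x)>\lambda$ whenever $x-h(t)<-R$. Given $\varepsilon>0$, choosing $\lambda=p-\varepsilon$ produces $p-\varepsilon<u(t,x)<p$ for $x-h(t)<-R$, which is precisely the required uniform convergence, so $(u,h)$ is a transition semi-wave.

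The hard part will be the necessity of 2): one must exclude $u$ approaching $p$ on a bounded strip adjacent to the free boundary, and the delicate point is to produce a limit profile in which $p$ becomes an interior maximum and then play the strong maximum principle against the boundary value $w_\infty(t,0)=0$, while separately disposing of the degenerate case $x_*=0$ where the near-maximal points accumulate on the boundary. By comparison, the sufficiency direction is routine once one notices that the deep interior is connected and that 1) and 2) together pin down the sign of $u-\lambda$ there.
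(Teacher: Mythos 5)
Your proof is correct, and the sufficiency direction together with the necessity of 1) coincide in substance with the paper's argument (the paper phrases the sufficiency step via $\liminf$/$\limsup$ of $u$ as $x-h(t)\to-\infty$ and the intermediate value theorem rather than via connectedness of the deep interior $D$, but it is the same underlying idea, and the final step --- ruling out $\overline{p}<p$ by combining 2) with the range condition --- is identical to yours). Where you genuinely diverge is in the necessity of 2). The paper handles this by setting $w=p-u$ and invoking the chaining argument of Proposition \ref{prop3.1}: linear parabolic (Harnack-type) estimates propagate the smallness of $w(t_n,x_n)$ backward in time along a finite chain of space-time boxes kept at a fixed distance $\kappa$ from the free boundary, until the chain is forced to terminate at the point $x_{n,K}=h(t_n-1)$, where $w$ is bounded below because $u$ vanishes near the boundary --- a quantitative argument that never extracts a limit. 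You instead pass to a limiting entire solution $w_\infty$ of a limiting problem via the a priori bounds of Proposition \ref{prop_prioriestimate}, place the value $p$ at a point $(0,x_*)$, and apply the strong maximum principle to $p-w_\infty$ (which satisfies a linear equation with bounded zero-order coefficient since $f(t,x,p)\equiv0$ and $\partial_u f$ is bounded on bounded sets), disposing of the boundary-accumulation case $x_*=0$ directly by the uniform bound on $w_x$. Both routes are valid under the paper's standing hypotheses on $f$, which guarantee both the Harnack chain and the compactness of the translated nonlinearities; the paper's choice reuses machinery it has already built and avoids subsequence extraction, while yours is shorter to state once the compactness step is accepted and makes more transparent exactly where the hypotheses $f(t,x,p)\equiv0$ and $w(t,0)=0$ enter.
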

\begin{proof}
Suppose that $(u,h)$ is a transition semi-wave of \eqref{1.1} connecting $p$ and $0$.
Then 1) follows from \eqref{defofTranW} immediately.
If 2) fails, then there exists a sequence $\{(t_{n},x_{n})\}_{n\in\mathbb{N}}$ with $-C\leq x_{n}-h(t_{n})<0$ such that $u(t_{n},x_{n})\to p$ as $n\to\infty$. Consider $w=p-u$.
Using an argument similar to the one used in the proof of Proposition \ref{prop3.1}, we can obtain a contradiction.

Conversely, suppose that 1) and 2) hold. Denote
$$\underline{p}=\liminf\limits_{x-h(t)\to-\infty}u(t,x)\ \ \text{and}\ \
\overline{p}=\limsup\limits_{x-h(t)\to-\infty}u(t,x).$$
Then $0\leq\underline{p}\leq\overline{p}\leq p.$
Suppose that $\underline{p}<\overline{p}$. Then we can always find $\{(t_{n},x_{n})\}_{n\in\mathbb{N}}$
with $x_{n}-h(t_{n})\to-\infty$ such that $u(t_{n},x_{n})=(\underline{p}+\overline{p})/2$,
which contradicts 1). Hence $\underline{p}=\overline{p}$ and
$\lim\limits_{x\to-\infty}|u(t,x+h(t))-\overline{p}|=0$ uniformly w.r.t. $t\in\Real.$
It remains to prove that $\overline{p}=p$. Suppose that $\overline{p}<p$. Then there exist
$\varepsilon, C>0$ such that $u(t,x)\leq p-\varepsilon$ for $x-h(t)\leq-C.$
Combining this with 2), we have $\sup\limits_{x\leq h(t)}\{u(t,x)\}<p$,
which contradicts $\{u(t,x): t\in\mathbb{R}, x<h(t)\}=(0,p)$.
\end{proof}

\section{Proof of Theorem \ref{thmhomo}}

\subsection{Bounded for $|h(t)-c^{*}t|$}

\begin{thm}\label{thm4.1}
Assume that \eqref{f} holds, and that $(c^{*},q_{c^{*}})$ is a solution of \eqref{2.1}.
Then $q_{c^{*}}^{\prime}(x)>0$ for $x\geq0$.
Moreover, $(c^{*},q_{c^{*}})$ is unique.
\end{thm}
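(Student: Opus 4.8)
The plan is to analyze the profile equation \eqref{2.1} in the phase plane of the autonomous system $q'=v$, $v'=cv-f(q)$, and to treat the two assertions in turn: first strict monotonicity of $q_{c^*}$, then uniqueness of the pair via a monotone shooting comparison. As a preliminary I would extract the elementary consequences of \eqref{2.1}. Since $q(0)=0$, $q'(0)=c/\mu$ and $q>0$ immediately to the right of $0$, one must have $c>0$ ($c\le 0$ would give $q'(0)\le0$, contradicting positivity, and $c=0$ forces $q\equiv0$ by ODE uniqueness, contradicting $q(+\infty)=1$). Next I would show $0<q<1$ on $(0,+\infty)$: if $q$ reached $1$ at a finite $z_1$ with $q'(z_1)=0$, then $(q,v)=(1,0)$ is an equilibrium and uniqueness forces $q\equiv1$, impossible; and if $q$ crossed $1$ with $q'(z_1)>0$, then for $q>1$ we have $f(q)<0$ by \eqref{f}, so $q''=cq'-f(q)>0$ as long as $q'>0$, whence $q$ increases without bound, contradicting $q(+\infty)=1$.

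The monotonicity $q_{c^*}'>0$ on $[0,+\infty)$ is the heart of the matter and, I expect, the main obstacle, precisely because \eqref{f} imposes no sign restriction on $f$ inside $(0,1)$, so one cannot argue from the sign of $f$. Here I would exploit that an orbit of the planar autonomous system cannot revisit a phase point (a repeated point forces a periodic orbit, impossible since $q\to1$). Suppose $q'$ first vanishes at $z_0>0$. Then $q$ is strictly increasing on $[0,z_0]$, so the arc $\Gamma_1=\{(q(z),v(z)):0\le z\le z_0\}$ is the graph of a function $v=V_1(q)$ over $q\in[0,a]$, where $a:=q(z_0)\in(0,1)$, with $V_1>0$ on $[0,a)$ and $V_1(a)=0$. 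Since $q(+\infty)=1>a$, let $z_2>z_0$ be the first time the orbit returns to the level $q=a$; then $q<a$ on $(z_0,z_2)$ and $v(z_2)\ge0$. Consider $h(z)=v(z)-V_1(q(z))$ on $[z_0,z_2]$, which is well defined as $q(z)\in(0,a]$. One has $h(z_0)=0$, $h<0$ just after $z_0$ (there $v<0$ while $V_1(q)>0$), and $h(z_2)=v(z_2)\ge0$. By the intermediate value theorem $h$ vanishes at some $z^{*}\in(z_0,z_2]$, so the phase point $(q(z^{*}),v(z^{*}))=(q(z^{*}),V_1(q(z^{*})))$ lies on $\Gamma_1$ and is hence visited both at $z^{*}$ and at the earlier (distinct) time in $[0,z_0]$ at which $\Gamma_1$ passes through it. This repeated phase point yields a periodic orbit, a contradiction. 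Hence $q'>0$ on $[0,+\infty)$, which is the first assertion.

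For uniqueness I would set up a comparison in the $(q,v)$-plane. By the monotonicity just proved, any solution of \eqref{2.1} with speed $c$ lets us write $v$ as a positive function $V_c(q)$ on $(0,1)$ solving $\frac{dV}{dq}=c-\frac{f(q)}{V}$, with $V_c(0^+)=c/\mu$ and $V_c(1^-)=0$. Linearizing at $(1,0)$, which is a saddle exactly because $f'(1)<0$ (the characteristic roots $\lambda_\pm(c)=\tfrac12\big(c\pm\sqrt{c^2-4f'(1)}\big)$ have opposite signs), the orbit enters $(1,0)$ along the stable eigendirection, giving $V_c(q)\sim|\lambda_-(c)|(1-q)$ as $q\to1^-$, where $|\lambda_-(c)|=\frac{2(-f'(1))}{\sqrt{c^2-4f'(1)}+c}$ is strictly decreasing in $c$. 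Thus for $c_1<c_2$ one has $V_{c_2}<V_{c_1}$ near $q=1$. If they coincided at some interior $q_0$, then $\frac{d}{dq}(V_{c_2}-V_{c_1})(q_0)=c_2-c_1>0$, which forbids the largest such coincidence point as $q$ decreases from $1$; hence $V_{c_2}<V_{c_1}$ throughout $(0,1)$. Letting $q\to0^+$ gives $c_2/\mu=V_{c_2}(0)\le V_{c_1}(0)=c_1/\mu$, contradicting $c_1<c_2$.

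Therefore the speed $c^{*}$ is unique, and once the speed is fixed the profile is pinned down by the initial data $q(0)=0,\ q'(0)=c^{*}/\mu$ through ODE uniqueness (valid since $f\in C^1$). Consequently $(c^{*},q_{c^{*}})$ is unique, completing the proof. I would expect the bookkeeping in the first two steps (positivity and $0<q<1$) to be routine; the genuinely delicate point is converting the failure of monotonicity into a self-intersection of the orbit, and the comparison giving uniqueness to hinge entirely on the correct monotonicity of $|\lambda_-(c)|$ and on the sign of $c_2-c_1$ at a coincidence of $V_{c_1}$ and $V_{c_2}$.
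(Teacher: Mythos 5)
Your proof is correct and follows essentially the same route as the paper: a phase-plane analysis of $q'=v$, $v'=cv-f(q)$ for the strict monotonicity, and a comparison of the trajectories written as graphs $p=P_c(q)$ solving $dP/dq=c-f(q)/P$ (using the slopes $\big(c-\sqrt{c^{2}-4f'(1)}\big)/2$ at the saddle $(1,0)$ and the sign of $c_2-c_1$ at a coincidence point) for the uniqueness. The only cosmetic difference is in the monotonicity step, where you close the contradiction by forbidding a self-intersection of the orbit, while the paper traps the turned-around trajectory in the region bounded by its own earlier arc --- the same phase-plane obstruction.
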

\begin{proof}
First we must have $c^{*}>0$ from \eqref{2.1}.
Note that \eqref{2.1} can be written in the equivalent form
\begin{equation}\label{pq}
\left\{
   \begin{aligned}
  &q^{\prime}=p,\\
  &p^{\prime}=cp-f(q).\\
   \end{aligned}
   \right.
\end{equation}
Then the solution $q_{c^{*}}$ corresponds to a trajectory $(q_{c^{*}}(x),p_{c^{*}}(x))$
of \eqref{pq} in pq-plane with $c=c^{*}$, which starts from the point $(0,\frac{c^{*}}{\mu})$ and ends at the point $(1,0)$ as $x\to +\infty$.
Then the trajectory has slope $\big(c^{*}-\sqrt{c^{*2}-4f^{\prime}(1)}\big)/2<0$.
Suppose that there exists $x_{0}>0$ such that $p_{c^{*}}(x)>0$ for $x\in[0,x_{0})$ and $p_{c^{*}}(x_{0})=0$.
Then $p_{c^{*}}^{\prime}(x_{0})\leq0$.
Suppose that $p_{c^{*}}^{\prime}(x_{0})=0$, i.e., $q_{c^{*}}^{\prime\prime}(x_{0})=0$. This and
the first equation of \eqref{2.1} yield that $f(q_{c^{*}}(x_{0}))=0$.
Hence $q\equiv q_{c^{*}}(x_{0})$ is also a solution of $q^{\prime\prime}-c^{*}q^{\prime}+f(q)=0$,
which contradicts the uniqueness of trajectory of \eqref{pq}. Therefore,
$p_{c^{*}}^{\prime}(x_{0})<0$, which yields $f(q_{c^{*}}(x_{0}))>0$. Then the trajectory $(q_{c^{*}}(x),p_{c^{*}}(x))$
has slope $-\infty$ at $(q_{c^{*}}(x_{0}),p_{c^{*}}(x_{0}))$, and it is easy to see that the trajectory is
contained in $\{(q,p): q\in[0,x_{0}], p\leq p_{c^{*}}(x), x\in[0,x_{0}]\}$. That is impossible since
$(q_{c^{*}}(x),p_{c^{*}}(x))\to (1,0)$ as $x\to +\infty$. Hence $p_{c^{*}}(x)>0$ for $x\geq0$.

Now we will show that $(c^{*},q_{c^{*}})$ is unique. The trajectory $(q_{c^{*}}(x),p_{c^{*}}(x))$
can be expressed as a function $p=P_{c^{*}}(q), q\in[0,1]$, which satisfies
$$\frac{dP_{c^{*}}}{dq}=c^{*}-\frac{f(q)}{P_{c^{*}}}\ \text{for}\ q\in(0,1), P_{c^{*}}(0)=\frac{c^{*}}{\mu}, P_{c^{*}}(1)=0.$$
Suppose that $(c,q_{c})$ is another solution of \eqref{2.1}. We may, without loss of generality,
assume that $c<c^{*}$. Then there exists a trajectory $(q_{c}(x),p_{c}(x))$ of \eqref{pq} in pq-plane,
which starts from the point $(0,\frac{c}{\mu})$ and ends at the point $(1,0)$ as $x\to +\infty$.
Moreover, the trajectory with slope $\big(c-\sqrt{c-4f^{\prime}(1)}\big)/2<0$ at $(1,0)$
can be expressed as a function $p=P_{c}(q), q\in[0,1]$, which satisfies
$$\frac{dP_{c}}{dq}=c-\frac{f(q)}{P_{c}}\ \text{for}\ q\in(0,1), P_{c}(0)=\frac{c}{\mu}, P_{c}(1)=0.$$
Note that $P_{c^{*}}(0)=\frac{c^{*}}{\mu}>\frac{c}{\mu}=P_{c}(0)$ and
$0>\frac{dP_{c^{*}}}{dq}|_{q=1}=\big(c^{*}-\sqrt{c^{*2}-4f^{\prime}(1)}\big)/2
>\big(c-\sqrt{c-4f^{\prime}(1)}\big)/2=\frac{dP_{c}}{dq}\big|_{q=1}.$
Then there exists $q_{0}\in(0,1)$ such that $P_{c^{*}}(q_{0})=P_{c}(q_{0})$ and
\begin{equation}\label{4.5}
\frac{dP_{c^{*}}}{dq}\bigg|_{q=q_{0}}\leq\frac{dP_{c}}{dq}\bigg|_{q=q_{0}}.
\end{equation}
On the other hand,
$\frac{dP_{c^{*}}}{dq}\big|_{q=q_{0}}=c^{*}-\frac{f(q_{0})}{P_{c^{*}}(q_{0})}
>c-\frac{f(q_{0})}{P_{c}(q_{0})}=\frac{dP_{c}}{dq}\big|_{q=q_{0}},$ which contradicts \eqref{4.5}.
Hence the solution of \eqref{2.1} is unique.
\end{proof}

The existence and uniqueness of the solution of \eqref{2.1} were proved in Proposition 1.9 and Theorem 6.2 in \cite{DLo} when $f$ is  of ($f_{M}$), $(f_{B})$, or $(f_{C})$ type.

%\begin{equation}\label{4.6}
%\exists \theta\in(0,1)\ \text{such that}\ f\ \text{is decreasing in}\ [1-\theta,+\infty).
%\end{equation}

\begin{lemma}\label{lem4.3}
Assume that \eqref{f} holds, that $(c^{*},q_{c^{*}})$ is the solution of \eqref{2.1}, and that
$(v(t,x),g(t))$ is a solution of \eqref{4.2} with $T=+\infty$. If the initial value $v(0,x)=v_{0}(x)$ satisfies $1\leq\liminf\limits_{x\to-\infty}v_{0}(x)\leq\limsup\limits_{x\to-\infty}v_{0}(x)<+\infty$,
then, for any $c\in(0,c^{*})$,
there exist $\delta\in(0,-f^{\prime}(1)), T^{*}>0$ and $M>0$ such that for
$t\geq T^{*}$,
\begin{equation}\label{4.3}
ct\leq g(t),
\end{equation}
\begin{equation}\label{4.4}
v(t,x)\geq1-Me^{-\delta t}\ \text{for}\ x\in[-ct,ct],
\end{equation}
\begin{equation}\label{upperbdd}
v(t,x)\leq1+Me^{-\delta t}\ \text{for}\ x\leq g(t).
\end{equation}
\end{lemma}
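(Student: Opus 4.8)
The plan is to sandwich $v$ between a spatially homogeneous super-solution and a slowed, damped copy of the semi-wave $q_{c^*}$, and then invoke the comparison principle of Lemma \ref{lem4.2} together with its lower-solution version (Remark \ref{re4.1}). For the upper bound \eqref{upperbdd} I would argue as follows. The datum $v_0$ is continuous with $\limsup_{x\to-\infty}v_0<+\infty$, hence bounded, so I may let $\bar v(t)$ solve the ODE $\bar v'=f(\bar v)$ with $\bar v(0)=\max\{1,\sup v_0\}$. Because $f(1)=0$, $f<0$ on $(1,+\infty)$ and $f'(1)<0$, the trajectory $\bar v(t)$ decreases monotonically to $1$ and relaxes at the exponential rate $-f'(1)$, so $\bar v(t)-1\le Me^{-\delta t}$ for every $\delta\in(0,-f'(1))$. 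Regarded as a function independent of $x$, $\bar v(t)$ is a super-solution of the PDE; it lies above $v$ at $t=0$ and on the free boundary $x=g(t)$ (where $v=0$), so Lemma \ref{lem4.2} yields $v\le\bar v$ on $\{x<g(t)\}$, which is \eqref{upperbdd}.

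The spreading estimates \eqref{4.3} and \eqref{4.4} are the heart of the matter. Fixing $c\in(0,c^*)$, I would test the lower solution $\underline v(t,x)=\phi(t)\,q_{c^*}(\xi)$, $\xi=\underline g(t)-x$, with damping factor $\phi(t)=1-Me^{-\delta t}\uparrow1$ and retarded foot $\underline g(t)=c^*t-\tfrac{\kappa}{\delta}(1-e^{-\delta t})+b$, so that $\underline g'(t)=c^*-\kappa e^{-\delta t}$. Using the semi-wave identity $q_{c^*}''=c^*q_{c^*}'-f(q_{c^*})$, the interior inequality $\underline v_t\le\underline v_{xx}+f(\underline v)$ collapses to $\phi'q_{c^*}+\phi q_{c^*}'(\underline g'-c^*)\le f(\phi q_{c^*})-\phi f(q_{c^*})$; expanding the right-hand side in the small parameter $1-\phi=Me^{-\delta t}$ through $f\in C^1$ reduces this, after dividing by $e^{-\delta t}$, to $M[\delta q_{c^*}-f(q_{c^*})+q_{c^*}f'(\zeta)]\le\phi\kappa q_{c^*}'$ for an intermediate value $\zeta$. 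On the top region $q_{c^*}\in(1-\eta_0,1)$ the bracket is close to $\delta+f'(1)<0$, so the inequality holds precisely because $\delta<-f'(1)$; on $\{q_{c^*}\le1-\eta_0\}$, where $q_{c^*}'$ is bounded below by a positive constant by Theorem \ref{thm4.1}, it holds once $\kappa$ is large. The lower-solution free-boundary condition $\underline g'(t)\le-\mu\underline v_x(t,\underline g(t))=\phi c^*$ amounts to $\kappa\ge Mc^*$, again guaranteed by $\kappa$ large. Thus $(\underline v,\underline g)$ is a genuine lower solution of \eqref{4.2} for all $t\ge0$ provided $M$ is small.

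It remains to launch the comparison and read off the conclusions. Since $\liminf_{x\to-\infty}v_0\ge1$, I pick $X_0<g(0)$ with $v_0\ge1-M$ on $(-\infty,X_0]$ and set $b=X_0$; then $\underline v(0,\cdot)\le\phi(0)=1-M\le v_0$ on $(-\infty,X_0]$ and $\underline g(0)=X_0\le g(0)$, so the lower-solution comparison gives $g(t)\ge\underline g(t)$ and $v\ge\underline v$ for all $t>0$. As $\underline g(t)\ge c^*t-\kappa/\delta+X_0\ge ct$ once $t\ge T^*:=(\kappa/\delta-X_0)/(c^*-c)$, this is \eqref{4.3}. Finally, for $x\in[-ct,ct]$ one has $\xi=\underline g(t)-x\ge(c^*-c)t$, so the tail bound $q_{c^*}(\xi)\ge1-Ce^{r_-\xi}$ (with $r_-<0$ the stable characteristic root at $1$) together with $\phi=1-Me^{-\delta t}$ gives $v\ge\underline v\ge1-M'e^{-\delta't}$, i.e. \eqref{4.4}, after replacing $\delta$ by $\min\{\delta,|r_-|(c^*-c)\}\in(0,-f'(1))$.

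I expect the main obstacle to be the interior differential inequality near the free boundary, where hypothesis \eqref{f} controls neither the sign nor the size of $f'(0)$: the construction survives only because the retardation of the foot (large $\kappa$, permitted by the slope condition $\kappa\ge Mc^*$) dominates the uncontrolled reaction there, while the strict stability $\delta<-f'(1)$ closes the estimate near the top. The $o(1)$ errors from the $C^1$-expansion are absorbed by taking $M$ small and, if necessary, $T^*$ larger.
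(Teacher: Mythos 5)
Your argument is correct, but it reaches the spreading estimates \eqref{4.3}--\eqref{4.4} by a genuinely different route from the paper's. The paper works in the phase plane: for $\hat c\in(c,c^{*})$ it integrates $dP/dq=\hat c-f(q)/P$ from $P(0)=c^{*}/\mu$, obtains a profile vanishing at some $Q^{\hat c}<1$, glues two reflected copies into a compactly supported, expanding ``bump'' sub-solution of the two-front problem \eqref{4.1}, and then uses Lemmas \ref{lem3.0} and \ref{lem4.1} to force $g(t)\geq k_{\hat c}(t)\geq ct$; the interior convergence \eqref{4.4} and the upper bound \eqref{upperbdd} are obtained by citing \cite[Lemma 6.5]{DLo}. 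You instead build the Du--Matsuzawa--Zhou-type sub-solution $(1-Me^{-\delta t})q_{c^{*}}(\underline g(t)-x)$ with retarded foot $\underline g(t)=c^{*}t-\tfrac{\kappa}{\delta}(1-e^{-\delta t})+X_{0}$ --- exactly the device the paper postpones to Lemma \ref{lem4.5} --- and your verification of the differential inequalities is sound: the bracket is close to $\delta+f^{\prime}(1)<0$ near the top, elsewhere a large $\kappa$ beats the $O(M)$ error because $q_{c^{*}}^{\prime}$ has a positive lower bound on the corresponding compact set by Theorem \ref{thm4.1}, and $\kappa\geq Mc^{*}$ handles the Stefan condition. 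Your route is more self-contained and in fact yields the stronger bound $g(t)\geq c^{*}t-C$, i.e. half of Lemma \ref{lem4.6}, for free. Two small repairs are needed. First, the constant-in-$x$ super-solution $\bar v(t)$ does not vanish on any free boundary, so Lemma \ref{lem4.2} does not literally apply; invoke instead the ordinary parabolic comparison principle on the fixed domain $\{(t,x):t>0,\ x<g(t)\}$, where $v=0\leq\bar v$ on the lateral boundary --- this is precisely how \cite[Lemma 6.5(iii)]{DLo} proceeds. Second, the tail estimate $1-q_{c^{*}}(\xi)\leq Ce^{r_{-}\xi}$ (with $r_{-}<0$) should be justified by the saddle-point behaviour of \eqref{pq} at $(1,0)$, available since $f^{\prime}(1)<0$; any exponential rate $\delta^{\prime\prime}>0$ suffices there after shrinking the final $\delta$.
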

\begin{proof}
The proof is divided into two steps.\\
Step 1: Let $P_{c^{*}}$ be as in the previous subsection. Consider
\begin{equation}\label{P(q)}
\left\{
   \begin{aligned}
  &\frac{dP}{dq}=c-\frac{f(q)}{P},\ q>0,\\
  &P(0)=\frac{c^{*}}{\mu}.\\
   \end{aligned}
   \right.
\end{equation}
Since $c<c^{*}$, we easily see that the unique solution $P^{c}(q)$ of this problem stays below
$P_{c^{*}}(q)$ as $q$ increases from $0$.
Therefore there exists some $Q^{c}\in(0,1]$ such that $P^{c}(q)>0$ in $[0,Q^{c})$ and $P^{c}(Q^{c})=0$.
We must have $Q^{c}<1$. If not, then $P^{c}(q)$ corresponds to a trajectory $(q^{c}(x),p^{c}(x))$ of \eqref{pq} in pq-plane,
which starts from the point $(0,\frac{c^{*}}{\mu})$ and ends at the point $(1,0)$ as $x\to +\infty$.
Hence the trajectory $(q^{c}(x),p^{c}(x))$ has slope $\frac{dP^{c}}{dq}\big|_{q=1}=\big(c-\sqrt{c-4f^{\prime}(1)}\big)/2$ at $(1,0)$.
Obviously, $\frac{dP_{c^{*}}}{dq}|_{q=1}=\big(c^{*}-\sqrt{c^{*2}-4f^{\prime}(1)}\big)/2>\frac{dP^{c}}{dq}\big|_{q=1}.$
Note also that $\frac{dP_{c^{*}}}{dq}|_{q=0}<\frac{dP^{c}}{dq}\big|_{q=0}.$
Then there exists $q_{0}\in(0,1)$ such that $P_{c^{*}}(q_{0})=P^{c}(q_{0})$ and
\begin{equation}\label{diffless}
\frac{dP_{c^{*}}}{dq}\bigg|_{q=q_{0}}\leq\frac{dP^{c}}{dq}\bigg|_{q=q_{0}}.
\end{equation}
On the other hand,
$\frac{dP_{c^{*}}}{dq}\big|_{q=q_{0}}=c^{*}-\frac{f(q_{0})}{P_{c^{*}}(q_{0})}
>c-\frac{f(q_{0})}{P^{c}(q_{0})}=\frac{dP^{c}}{dq}\big|_{q=q_{0}},$ which contradicts \eqref{diffless}.
Hence $Q^{c}<1$. It is also easily seen that, as $c$ increases to $c^{*}$,
$Q^{c}$ increases to $1$ and $P^{c}(q)\to P_{c^{*}}(q)$ uniformly, in the sense that
$\|P^{c}-P_{c^{*}}\|_{L^{\infty}([0,Q^{c}])}$. Let $x^{c}>0$ such that $q^{c}(x^{c})=Q^{c}$.
For $t\geq0$, we define $k_{c}(t)=x^{c}+ct$ and
\begin{equation*}
w_{c}(t,x)=\left\{
   \begin{aligned}
  q^{c}(k(t)-x),\ \ &x\in[ct,k_{c}(t)],\\
  q^{c}(x^{c}),\ \ &x\in[-ct,ct],\\
  q^{c}(k(t)+x),\ \ &x\in[-k_{c}(t),-ct].\\
   \end{aligned}
   \right.
\end{equation*}
Step 2: Fix $\hat{c}\in(c,c^{*})$. Let $(u,g_{-},g_{+})$ be a solution of \eqref{4.1} with initial value $\pm g_{\pm}(0)=k_{\hat{c}}(0)$ and $u(0,x)=w_{\hat{c}}(0,x)$.
One can easily check that $(w_{\hat{c}}(t,x),-k_{\hat{c}}(t),k_{\hat{c}}(t))$ is a lower solution of \eqref{4.1} for $t\geq 0$.
Hence by Lemma \ref{lem3.0} for lower solution version, we have
\begin{equation}\label{compare1}
\left\{
\begin{aligned}
 {g}_{-}(t)\leq -k_{\hat{c}}(t),\ k_{\hat{c}}(t)\leq g_{+}(t)\ \text{for}\ t\in(0,+\infty),\\
 w_{\hat{c}}(t,x)\leq u(t,x)\ \text{for}\ t\in(0,+\infty), x\in(-k_{\hat{c}}(t),k_{\hat{c}}(t)).
\end{aligned}
\right.
\end{equation}
If
\begin{equation}\label{initialcompri}
g(0)\geq k_{\hat{c}}(0)=g_{+}(0), v_{0}(x)>w_{\hat{c}}(0,x)=u(0,x)\ \text{on}\ [-k_{\hat{c}}(0),k_{\hat{c}}(0)],
\end{equation}
then for $t\geq 0$, we have
\begin{equation}\label{compare2}
\left\{
\begin{aligned}
g_{+}(t)\leq g(t)\ &\text{for}\ t\in(0,+\infty),\\
u(t,x)\leq v(t,x)\ &\text{for}\ t\in(0,+\infty), x\in(g_{-}(t),g_{+}(t)).
\end{aligned}
\right.
\end{equation}
by using Lemma \ref{lem4.1} with $\overline{u}(t,x)=v(t,x), \overline{g}_{-}(t)=g_{-}(t), \overline{g}_{+}(t)=g(t)$.
It follows from \eqref{compare1} and \eqref{compare2} that 
$$-k_{\hat{c}}(t)<-\hat{c}(t)<-ct,\ ct<\hat{c}(t)<k_{\hat{c}}(t)\leq g(t)\ \text{for}\ t\in(0,+\infty),$$
$$ w_{\hat{c}}(t,x)\leq v(t,x)\ \text{for}\ t\in(0,+\infty), x\in[-k_{\hat{c}}(t),k_{\hat{c}}(t)]\supset[-ct,ct].$$
Hence \eqref{4.3} holds.
Now by the almost same arguments as used in the proof of \cite[Lemma 6.5]{DLo}, we can obtain \eqref{4.4}.

If \eqref{initialcompri} fails, then we can consider $(\tilde{v}(t,x),\tilde{g}(t))=(v(t,x-G),g(t)+G)$
for some $G>0$.
Hence, for $G$ large enough, \eqref{initialcompri} holds for $(\tilde{v},\tilde{g})$ since $\liminf\limits_{x\to-\infty}v_{0}(x)\geq1$.
Thus \eqref{4.3} and \eqref{4.4} hold for $(\tilde{v},\tilde{g})$,
thereby hold for $(v,g)$ since $c$ is arbitrary.

The proof of \eqref{upperbdd} is essentially the same as the proof of (iii) of \cite[Lemma 6.5]{DLo}.
\end{proof}

\begin{rem}\label{re4.3}
Assume that $f$ is of type $(f_{M})$,
and that $(v(t,x),g(t))$ is a solution of \eqref{4.2} with $T=+\infty$. If
$0\leq\liminf\limits_{x\to-\infty}v_{0}(x)\leq\limsup\limits_{x\to-\infty}v_{0}(x)<+\infty$,
then the conclusions of Lemma \ref{lem4.3} still hold. In fact, taking a solution $(u,g_{-},g_{+})$
of \eqref{4.1} with $g_{0}\geq\pi/\big(2\sqrt{f^{\prime}(0)}\big)$ and
$\sup\limits_{x\in(-g_{0},g_{0})}u_{0}(x)<\liminf\limits_{x\to-\infty}v_{0}(x)$,
we have $g(t)+G\geq g_{+}(t)$ and $v(t,x-G)\geq u(t,x)$ for $t>0, x\in[g_{-}(t),g_{+}(t)]$ with some $G\geq0$ by Lemma \ref{lem4.1}. Then, it follows from
\cite[Corollary 4.5 and Lemma 6.5]{DLo} that \eqref{4.3}--\eqref{upperbdd} hold.
\end{rem}

\begin{lemma}\label{lem4.4}
Assume that \eqref{f} holds, that $(c^{*},q_{c^{*}})$ is the solution of \eqref{2.1}, and that
$(u(t,x),h(t))$ is a bounded transition semi-wave of \eqref{1.1} which connects $1$ and $0$. Then $u(t,x)\in[0,1]$ for $t\in\mathbb{R}, x\leq h(t)$. 
Moreover, for any $c\in(0,c^{*})$, there exist $\delta\in(0,-f^{\prime}(1)), T^{*}>0$ and $M>0$ such that for $t\geq T^{*}$,
\begin{equation}\label{l4.2.1}
{u}(t,x)\geq1-Me^{-\delta t}\ \text{for}\ x\leq ct.
\end{equation}
\end{lemma}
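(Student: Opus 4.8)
The plan is to prove the two assertions separately: the pointwise bound $u\in[0,1]$ by the ordinary parabolic comparison principle on the moving domain $\Omega_{h}$, and the lower estimate \eqref{l4.2.1} by applying Lemma \ref{lem4.3} to $(u,h)$ itself on the strip $\{|x|\le ct\}$ and then extending it to the far-left region $x<-ct$ with a time-dependent subsolution. For $u\in[0,1]$, positivity is part of the definition of a semi-wave, so only $u\le1$ must be shown. Put $M_{0}=\sup_{\Omega_{h}}u$ and assume $M_{0}>1$ (otherwise there is nothing to prove). For $s\in\Real$ let $V_{s}$ solve $V'=f(V)$ with $V_{s}(s)=M_{0}$; since $f<0$ on $(1,+\infty)$ and $f(1)=0$, the solution $V_{s}$ decreases monotonically to $1$, and $V_{s}(t_{0})\to1$ as $s\to-\infty$ for each fixed $t_{0}$. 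On $\{(t,x):t\ge s,\ x<h(t)\}$ the difference $u-V_{s}$ satisfies a linear parabolic inequality with bounded coefficient $\big(f(u)-f(V_{s})\big)/(u-V_{s})$, is $\le0$ on the slice $t=s$ (as $u(s,\cdot)\le M_{0}=V_{s}(s)$), is negative on the boundary $x=h(t)$ (where $u=0<V_{s}$), and is bounded as $x\to-\infty$; hence $u\le V_{s}$ throughout. Fixing $(t_{0},x_{0})\in\Omega_{h}$ and letting $s\to-\infty$ gives $u(t_{0},x_{0})\le1$, so $u\le1$ everywhere.

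For \eqref{l4.2.1} I would first note that the restriction of $(u,h)$ to $t\ge0$ is a bounded solution of \eqref{4.2} with $T=+\infty$, and that, because $(u,h)$ connects $1$ and $0$, one has $\lim_{x\to-\infty}u(0,x)=1$, so the initial datum satisfies $1\le\liminf_{x\to-\infty}u(0,\cdot)\le\limsup_{x\to-\infty}u(0,\cdot)<+\infty$. Lemma \ref{lem4.3} then yields $\delta\in(0,-f'(1))$, $T_{1}>0$ and $M_{1}>0$ with $h(t)\ge ct$ and
$$u(t,x)\ge1-M_{1}e^{-\delta t}\quad\text{for}\ x\in[-ct,ct],\ t\ge T_{1}.$$
Since lowering $\delta$ only weakens this inequality, I may also fix $\delta_{0}\in(0,-f'(1))$ and $\eta_{0}>0$ with $f(1-\eta)\ge\delta_{0}\eta$ for $\eta\in[0,\eta_{0}]$ (possible as $f(1-\eta)/\eta\to-f'(1)$) and assume $\delta\le\delta_{0}$.

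To reach the tail $x<-ct$ I would compare $u$ with the spatially constant subsolution $\psi(t)=1-\eta_{0}e^{-\delta(t-T_{B})}$ on $D=\{(t,x):t\ge T_{B},\ x\le-ct\}$. For $t\ge T_{B}$ one has $\eta:=\eta_{0}e^{-\delta(t-T_{B})}\le\eta_{0}$, whence $\psi_{t}=\delta\eta\le\delta_{0}\eta\le f(\psi)=\psi_{xx}+f(\psi)$, so $\psi$ is indeed a subsolution. Taking $T_{B}\ge T_{1}$ large, the strip bound gives $u(t,-ct)\ge1-M_{1}e^{-\delta t}\ge\psi(t)$ on the lateral boundary (this forces $\eta_{0}e^{\delta T_{B}}\ge M_{1}$), while the uniform convergence of the transition wave, combined with $x-h(T_{B})\le-2cT_{B}$, gives $u(T_{B},\cdot)\ge1-\eta_{0}=\psi(T_{B})$ on the slice $x\le-cT_{B}$; the bound $u\in[0,1]$ keeps the linearized coefficient bounded, so comparison yields $u\ge\psi=1-\eta_{0}e^{\delta T_{B}}e^{-\delta t}$ on $D$. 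With $M=\max\{M_{1},\eta_{0}e^{\delta T_{B}}\}$ and $T^{*}=T_{B}$, and since enlarging $M$ weakens the estimate, the strip and the tail combine into $u(t,x)\ge1-Me^{-\delta t}$ for all $x\le ct$, $t\ge T^{*}$. The crux is exactly this last matching: one must pair the rate $\delta$ produced by Lemma \ref{lem4.3} on $\{|x|\le ct\}$ with the rate built into $\psi$ on the moving boundary $x=-ct$ and simultaneously validate $\psi\le u$ on the initial slice from the merely uniform (rate-free) convergence of the wave, the restriction $\delta<-f'(1)$ being dictated by the subsolution condition near the stable state $u=1$.
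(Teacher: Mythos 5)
Your proof is correct, but it takes a genuinely different route from the paper's at both steps. For the bound $u\le1$ the paper also pushes the starting time to $-\infty$, but it does so by applying the upper estimate \eqref{upperbdd} of Lemma \ref{lem4.3} to the time--space translates $\big(u(\cdot+t_0,\cdot+h(t_0)),h(\cdot+t_0)-h(t_0)\big)$ and using that the $T^{*}$ there is uniform in $t_0$; your spatially constant ODE supersolution $V_s$ with $V_s'=f(V_s)$, $V_s(s)=\sup u$, reaches the same conclusion more directly, needing only $f<0$ on $(1,+\infty)$, the standing bound on $\partial_u f$ over bounded sets (so the linearized coefficient is bounded), and the Phragm\'en--Lindel\"of maximum principle for bounded functions on the unbounded moving domain. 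The more substantial divergence is in the proof of \eqref{l4.2.1}: the paper extends the strip estimate \eqref{4.4} to the region $x\le-ct$ structurally, by first replacing $f$ with a version $\tilde f$ that is decreasing on $[1,+\infty)$ (harmless once $u\le1$) and then invoking the sliding argument of Proposition \ref{propspaceshift} to get that $u$ is decreasing in $x$, whence $u(t,x)\ge u(t,-ct)\ge 1-Me^{-\delta t}$ for $x\le -ct$. You instead run a direct comparison on $\{t\ge T_B,\ x\le-ct\}$ against the spatially constant subsolution $1-\eta_0e^{-\delta(t-T_B)}$, matching it to the strip bound on the lateral boundary $x=-ct$ (which fixes $\eta_0 e^{\delta T_B}\ge M_1$) and to the rate-free uniform limit in Definition \ref{def2.1} on the initial slice (which fixes $2cT_B\ge B_{\eta_0}$); the reduction to $\delta\le\delta_0$ is legitimate since decreasing $\delta$ only weakens \eqref{4.4} for $t\ge0$. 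Both routes are sound: the paper's yields the global spatial monotonicity of $u$ as a by-product but leans on the heavier Proposition \ref{propspaceshift}, while yours is more elementary and isolates exactly the two ingredients actually needed, namely Lemma \ref{lem4.3} on the strip and the defining uniform limit of the transition semi-wave far behind the front.
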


\begin{proof}
First we show that $u(t,x)\in[0,1]$. It suffices to show that $u(t,x)\leq1$ for $t\in\mathbb{R}, x\leq h(t)$. If not, then there exists $(\xi,\tau)$ with $\xi<h(\tau)$
such that $u(\xi,\tau)>1$. 
Now for any $t_{0}\in\Real$, let $(\tilde{u}(t,x),\tilde{h}(t))=\big(u(t+t_{0},x+h(t_{0})),h(t+t_{0})-h(t_{0})\big)$.
Then $(\tilde{u}(t,x),\tilde{h}(t))$ is still a bounded transition semi-wave of \eqref{1.1} connecting $1$ and $0$ and
the assumptions in Lemma \ref{lem4.3} hold for $(\tilde{u}(t,x),\tilde{h}(t))$.
From \eqref{upperbdd}, we have $\tilde{u}(t,x)\leq1+Me^{-\delta t}\ \text{for}\ t\geq T^{*}, x\leq \tilde{h}(t).$
Moreover, the $T^{*}$ here does not depend on $t_{0}$ since $\inf\limits_{t\in\mathbb{R}}h^{\prime}(t)>0$ and $\lim\limits_{x\to-\infty}|u(t,x+h(t))-1|=0$ uniformly in $t\in\Real$.
By enlarging $T^{*}$ we may assume that $Me^{-\delta t}<u(\tau,\xi)-1$ for $t\geq T^{*}$.
Letting $t_{0}=\tau-T^{*}$, we have
$$u(\tau,\xi)=u(T^{*}+t_{0},\xi-h(t_{0})+h(t_{0}))=\tilde{u}(T^{*},\xi-h(t_{0}))\leq1+Me^{-\delta t}<u(\tau,\xi),$$
which is a contradiction. Hence $u(t,x)\leq1$ for $t\in\mathbb{R}, x\leq h(t)$.

Next we show that \eqref{l4.2.1} holds. Choose $\eta>0$ so small that $f^{\prime}(u)<0$ for $u\in[1,1+\eta]$.
Consider $\tilde{f}(u)$ decreasing in $[1,+\infty)$ with $\tilde{f}(u)=f(u)$ in $[0,1+\eta]$. Then $(u,h)$ is a transition semi-wave of \eqref{1.1} with $f(t,x,u)$
replaced by $\tilde{f}(u)$. Hence Proposition \ref{propspaceshift} implies \eqref{l4.2.1} because of \eqref{4.4}.
\end{proof}

\begin{lemma}\label{lem4.5}
Let the assumptions of Lemma \ref{lem4.4} hold.
Then $|h(t)-c^{*}t|$ is bounded for $t\in\mathbb{R}$.
\end{lemma}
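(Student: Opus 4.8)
The goal is to bound $h(t)-c^{*}t$ above and below on all of $\mathbb{R}$. Writing $\psi(t)=h(t)-c^{*}t$, I would reduce everything to two \emph{uniform} one-sided statements: (A) there is $B_{1}>0$ with $\psi(t)\le \psi(s)+B_{1}$ for all $t\ge s$; and (B) there is $B_{2}>0$ with $\psi(t)\ge \psi(s)-B_{2}$ for all $t\ge s$. Once (A) and (B) hold, taking $s=0$ bounds $\psi$ on $[0,+\infty)$, while evaluating the same inequalities at time $0$ with an arbitrary $s\le 0$ gives, from (A), $\psi(0)\le\psi(s)+B_{1}$ (hence a lower bound on $(-\infty,0]$) and, from (B), $\psi(0)\ge\psi(s)-B_{2}$ (hence an upper bound on $(-\infty,0]$). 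Altogether $|\psi(t)-\psi(0)|\le\max\{B_{1},B_{2}\}$, which is the claim. Both (A) and (B) will come from comparison on $[s,+\infty)$ with a solution of the one–phase problem \eqref{4.2} moving at speed $c^{*}$, the crucial point being that $B_{1},B_{2}$ are independent of $s$.

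For (A) I would dominate $(u,h)$ from above by a Cauchy solution $(V,G)$ of \eqref{4.2} on $[s,+\infty)$. Since $u\le 1$ by Lemma \ref{lem4.4}, I fix a small $\eta>0$ with $f'<0$ on $[1,1+\eta]$ and take the $s$-independent profile, shifted by $h(s)$, equal to $1+\eta$ for $x\le h(s)$ and cut down to $0$ on $[h(s),h(s)+1]$, with $G(s)=h(s)+1\ge h(s)$. Then $V(s,\cdot)\ge u(s,\cdot)$ holds trivially, with no matching of decay rates required (this is the reason for using a plateau rather than a translate of $q_{c^{*}}$), so Lemma \ref{lem4.2} gives $h(t)\le G(t)$ for $t\ge s$. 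Because the datum is a fixed shape translated by $h(s)$, the sharp front location $G(t)=c^{*}(t-s)+h(s)+O(1)$ for \eqref{4.2} (cf. \cite{DLo} and Remark \ref{re4.3}) yields $\psi(t)\le\psi(s)+C$ with $C$ independent of $s$ for large $t$; the short transient $t\in[s,s+T]$ is absorbed using $\sup h'<+\infty$ from Proposition \ref{prop_prioriestimate}. This gives (A).

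For (B) I would squeeze $(u,h)$ from below by a Cauchy subsolution $(v,g)$ of \eqref{4.2}. Using the uniform limit $u(t,x+h(t))\to1$ together with $\inf_{x-h(t)\le -a}u>0$ from Proposition \ref{prop3.2}, I place, again $s$-independently up to the shift $h(s)$, a fixed compactly supported datum $v(s,\cdot)\le u(s,\cdot)$ whose height and width exceed the threshold of $f$, with $g(s)\le h(s)$; the lower-solution version of Lemma \ref{lem4.2} (Remark \ref{re4.1}) then gives $g(t)\le h(t)$, and a sharp lower spreading bound $g(t)\ge c^{*}(t-s)+h(s)-C'$ with $C'$ independent of $s$ yields (B), the transient handled by $\inf h'>0$.

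The main obstacle is exactly this sharp lower bound in (B). The estimate actually recorded in Lemma \ref{lem4.3} only gives $g(t)\ge c t$ for every $c<c^{*}$, i.e. the correct speed but with an $o(t)$, not $O(1)$, error, and an $o(t)$ lag would destroy the uniform constant $B_{2}$. To upgrade it I would construct an explicit speed-$c^{*}$ subsolution from the profile $q_{c^{*}}$: using $q_{c^{*}}'\ge 0$ (Theorem \ref{thm4.1}) and $f'(1)<0$, a perturbed wave of the form $q_{c^{*}}(\xi(t)-x)$ with $\xi(t)=c^{*}t+A-\sigma e^{-\delta t}$, possibly corrected by an $O(e^{-\delta t})$ term, is a subsolution of \eqref{4.2} for suitable $A,\sigma,\delta>0$, the exponentially small corrections absorbing the drift and contributing only an $O(1)$ lag to the front. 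The comparison is then initiated at a single time behind the front via the exponential estimate \eqref{l4.2.1} (equivalently \eqref{4.4}). Checking that this subsolution satisfies the free-boundary inequality and can be started uniformly in $s$ is the technical heart of the argument; the symmetric perturbed supersolution controlled by \eqref{upperbdd} provides an alternative rate-sharp route to (A) as well.
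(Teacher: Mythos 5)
Your proposal is correct and follows essentially the same route as the paper: the paper also reduces to uniform-in-starting-time two-sided estimates on $h(t+t_0)-h(t_0)-c^*t$ (letting $t_0$ range over $\mathbb{R}$ to cover negative times), and obtains them by comparison with perturbed travelling waves $(1\pm M e^{-\delta t})\,q_{c^*}\bigl(c^*t+\mathrm{const}\mp\sigma M e^{-\delta t}-x\bigr)$ in the style of \cite{DMZ}, initiated via the exponential estimates of Lemma \ref{lem4.4}. Your identification of the key obstacle --- that Lemma \ref{lem4.3} only gives speed $c<c^*$ with an $o(t)$ lag, which must be upgraded to an $O(1)$ lag by a genuine speed-$c^*$ sub/supersolution with exponentially decaying corrections --- is exactly the crux the paper addresses.
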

\begin{proof}
For any $t_{0}\in\Real$, let $(\tilde{u}(t,x),\tilde{h}(t))$ be as in the proof of Lemma \ref{lem4.4}.
Note that $\delta\in(0,-f^{\prime}(1))$. Then there exists $\eta>0$ such that
\begin{equation*}
\left\{
   \begin{aligned}
  \delta\leq-f^{\prime}(u),\ \ &u\in[1-\eta,1+\eta],\\
   f(u)\geq0,\ \ &u\in[1-\eta,1].\\
\end{aligned}
   \right.
\end{equation*}
By enlarging $T^{*}$ we may assume that $Me^{-\delta t}<\frac{\eta}{2}$ for $t\geq T^{*}$.
We take $M^{\prime}>M$ such that $M^{\prime}e^{-\delta T^{*}}<\eta$.
We can also find $X_{0}>0$ such that
$$(1+M^{\prime}e^{-\delta T^{*}})q_{c^*}(X_{0})\geq1+Me^{-\delta T^{*}}$$
since $q_{c^{*}}(x)\to1$ as $x\to1$.
Now take
\begin{equation*}
\left\{
   \begin{aligned}
  \overline{h}(t)=c^{*}(t-T^{*})+\sigma M^{\prime}(e^{-\delta T^{*}}-e^{-\delta t})+\tilde{h}(T^{*})+X_{0},\\
   \overline{u}(t,x)=(1+M^{\prime}e^{-\delta t})q_{c^{*}}(\overline{h}(t)-x),\\
\end{aligned}
   \right.
\end{equation*}
and
\begin{equation*}
\left\{
   \begin{aligned}
  \underline{h}(t)=c^{*}(t-T^{*})+cT^{*}-\sigma M(e^{-\delta T^{*}}-e^{-\delta t}),\\
   \underline{u}(t,x)=(1-Me^{-\delta t})q_{c^{*}}(\underline{h}(t)-x).\\
\end{aligned}
   \right.
\end{equation*}
Computing as Lemmas 3.2 and 3.3 in \cite{DMZ}, we can show that, for $\sigma$ large enough and $t\geq T^{*}$,
$(\overline{u}(t,x),\overline{h}(t))$ and $(\underline{u}(t,x),\underline{h}(t))$
are upper and lower solutions of $(\tilde{u}(t,x),\tilde{h}(t))$, respectively.
We mention here that we need Lemma \ref{lem4.4} to show that $(\underline{u}(t,x),\underline{h}(t))$ is a lower solution.
Hence by Lemma \ref{lem4.2}, we have
$\underline{h}(t)\leq\tilde{h}(t)\leq\overline{h}(t)$ for any $t\geq T^{*},$ which yields that for any
$t\geq T^{*},$
$$(c-c^{*})T^{*}-\sigma M-B-g_{0}\leq h(t+t_{0})-h(t_{0})-c^{*}t\leq-c^{*}T^{*}+\sigma M^{\prime}+\tilde{h}(T^{*})+X_{0}-B-g_{0}.$$
Note that $t_{0}$ is arbitrary. Then the last inequality becomes
\begin{equation}\label{4.13}
C_{1}\leq h(t)-h(\tau)-c^{*}(t-\tau)\leq C_{2}.
\end{equation}
for any $\tau\in\Real, t\geq\tau+T^{*},$ where $C_{1}=(c-c^{*})T^{*}-\sigma M-B-g_{0}$ and
$C_{2}=\sigma M^{\prime}-c^{*}T^{*} +\tilde{h}(T^{*})+X_{0}-B-g_{0}.$
Setting $t=0, \tau\leq-T^{*}$, we have
$$-C_{2}\leq h(\tau)-c^{*}\tau-h(0)\leq-C_{1},$$
i.e., $-C_{2}+h(0)\leq h(t)-c^{*}t\leq-C_{1}+h(0)$ for any $t\leq-T^{*}.$
Setting $\tau=0, t\geq T^{*}$ in \eqref{4.13}, we have
$C_{1}+h(0)\leq h(t)-c^{*}t\leq C_{2}+h(0).$ Therefore, $|h(t)-c^{*}t|$ is bounded.
\end{proof}

\begin{lemma}\label{lem4.6}
Let the assumptions of Lemma \ref{lem4.3} hold. Then $|g(t)-c^{*}t|$ is bounded for $t\geq0$.
Moreover, if $f$ is of type $f_{M}$, then the condition 
$1\leq\liminf\limits_{x\to-\infty}v_{0}(x)\leq\limsup\limits_{x\to-\infty}v_{0}(x)<+\infty$
can be replaced by 
$0\leq\liminf\limits_{x\to-\infty}v_{0}(x)\leq\limsup\limits_{x\to-\infty}v_{0}(x)<+\infty$.
\end{lemma}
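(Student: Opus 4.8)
The plan is to trap the free boundary $g(t)$ between two explicit barriers built from the unique semi-wave profile $q_{c^{*}}$ of \eqref{2.1} (with amplitudes $1\pm Me^{-\delta t}$), exactly in the spirit of Lemma \ref{lem4.5}, and then to apply the comparison principle of Lemma \ref{lem4.2} together with its lower-solution version (Remark \ref{re4.1}). Two things change relative to Lemma \ref{lem4.5}: here $(v,g)$ solves the Cauchy-type problem \eqref{4.2}, so no time-translation is needed and the estimate is only claimed for $t\ge 0$; but we also lose the one-sided bound \eqref{l4.2.1}, which in Lemma \ref{lem4.5} supplied control of the solution on the far left. Recovering that far-left control will be the main obstacle.

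For the upper bound I would repeat the argument of Lemma \ref{lem4.5}. Fix $c\in(0,c^{*})$ and let $\delta,M,T^{*}$ be as in Lemma \ref{lem4.3}. Using \eqref{upperbdd}, pick $M'>M$ and $X_{0}>0$ with $(1+M'e^{-\delta T^{*}})q_{c^{*}}(X_{0})\ge 1+Me^{-\delta T^{*}}$, and set $\overline{h}(t)=c^{*}(t-T^{*})+\sigma M'(e^{-\delta T^{*}}-e^{-\delta t})+g(T^{*})+X_{0}$, $\overline{u}(t,x)=(1+M'e^{-\delta t})q_{c^{*}}(\overline{h}(t)-x)$. Computing as in Lemmas 3.2 and 3.3 of \cite{DMZ}, for $\sigma$ large $(\overline{u},\overline{h})$ is an upper solution of \eqref{4.2} on $t\ge T^{*}$; since $q_{c^{*}}$ is increasing (Theorem \ref{thm4.1}) and $v(T^{*},x)\le 1+Me^{-\delta T^{*}}$ by \eqref{upperbdd}, it dominates $(v,g)$ at $t=T^{*}$. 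Lemma \ref{lem4.2} then yields $g(t)\le\overline{h}(t)\le c^{*}t+C_{2}$ for $t\ge T^{*}$, with $C_{2}$ independent of $t$ because $g(T^{*})$ is finite.

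For the lower bound I would take $\underline{u}(t,x)=(1-Me^{-\delta t})q_{c^{*}}(\underline{h}(t)-x)$ on $x\le\underline{h}(t)$, with $\underline{h}'(t)=c^{*}-\sigma M\delta e^{-\delta t}$, i.e. $\underline{h}(t)=c^{*}t+\underline{h}(0)-\sigma M(1-e^{-\delta t})$; the computation of \cite{DMZ} shows that for $M$ small and $\sigma\ge c^{*}/\delta$ this is a lower solution of \eqref{4.2} for all $t\ge 0$. The only delicate point is the initial domination $\underline{u}\le v$, which is where the far left enters. Because $\liminf_{x\to-\infty}v_{0}(x)\ge 1$, I can choose $R$ so large that $v_{0}(x)\ge 1-\tfrac{M}{2}$ for $x\le -R$ and $-R\le g_{0}$, and then place the barrier far to the left by taking $\underline{h}(0)=-R$; for $x\le -R$ this gives $\underline{u}(0,x)=(1-M)q_{c^{*}}(-R-x)\le 1-M<v_{0}(x)$. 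Remark \ref{re4.1} then gives $g(t)\ge\underline{h}(t)\ge c^{*}t-(R+\sigma M)=:c^{*}t-C_{1}$ for all $t\ge 0$. (If instead one starts the lower solution at $t=T^{*}$, as in Lemma \ref{lem4.5}, the needed far-left domination follows from the auxiliary estimate $\liminf_{x\to-\infty}v(t,x)\ge 1-Me^{-\delta t}$, obtained by comparing $v$ with the spatially homogeneous subsolution $1-Me^{-\delta t}$; this is a subsolution of $u_{t}=u_{xx}+f(u)$ since $\delta<-f'(1)$ forces $f(1-Me^{-\delta t})\ge\delta Me^{-\delta t}$ once $1-Me^{-\delta t}$ is close to $1$.)

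Combining the two barriers gives $c^{*}t-C_{1}\le g(t)\le c^{*}t+C_{2}$ for $t\ge T^{*}$, while on $[0,T^{*}]$ both $g$ and $c^{*}t$ are bounded by continuity; hence $|g(t)-c^{*}t|$ is bounded on $[0,+\infty)$. For the monostable case, Remark \ref{re4.3} ensures \eqref{4.3}--\eqref{upperbdd} persist under the weaker hypothesis $0\le\liminf_{x\to-\infty}v_{0}\le\limsup_{x\to-\infty}v_{0}<+\infty$, so the upper-bound argument is unchanged. When the data need not tend to $1$ at $-\infty$ the far-left placement above may fail, and I would instead bound $g$ from below by comparing $(v,g)$ with the two-sided compactly supported lower solution furnished by Remark \ref{re4.3}: its right free boundary lies below $g(t)$ and already trails $c^{*}t$ by a bounded amount (by \cite{DLo}), which again yields $g(t)\ge c^{*}t-C_{1}$.
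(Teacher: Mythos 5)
Your argument is correct, but it reaches the lower bound by a different route than the paper. The paper first reduces to decreasing initial data (sandwiching $v_{0}$ between two decreasing functions and invoking Lemma \ref{lem4.2}), then uses Remark \ref{re4.2} to get that $v(t,\cdot)$ is decreasing in $x$, so that \eqref{4.4} upgrades to $v(t,x)\geq 1-Me^{-\delta t}$ for \emph{all} $x\leq ct$, $t\geq T^{*}$; this is exactly the analogue of \eqref{l4.2.1}, after which the barrier argument of Lemma \ref{lem4.5} runs verbatim from time $T^{*}$. You instead launch the subsolution at $t=0$ from a point $-R$ far to the left, using the hypothesis $\liminf_{x\to-\infty}v_{0}\geq 1$ directly to get the initial domination $(1-M)q_{c^{*}}\leq 1-M<1-\tfrac{M}{2}\leq v_{0}$ on $(-\infty,-R]$; this is legitimate because the amplitude $M$ of the lower barrier is decoupled from the $M$ of Lemma \ref{lem4.3} and can be taken small enough that the subsolution inequalities hold down to $t=0$ (you correctly flag both $M$ small and $\sigma\geq c^{*}/\delta$). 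Your approach avoids the monotone reduction entirely in the main case; the price is that it breaks when $v_{0}$ is not close to $1$ at $-\infty$, which is why you need a separate device (the two-sided compactly supported subsolution of Remark \ref{re4.3} together with the sharp estimate $|g_{+}(t)-c^{*}t|$ bounded from \cite{DMZ}/\cite{DLo}) for the monostable extension, whereas the paper's monotone reduction treats both cases uniformly. One caveat: your parenthetical alternative (starting the lower barrier at $t=T^{*}$ and justifying the domination by $\liminf_{x\to-\infty}v(t,x)\geq 1-Me^{-\delta t}$) is not sufficient as stated, since a $\liminf$ as $x\to-\infty$ controls $v$ only on some ray $(-\infty,-R_{\varepsilon}(t)]$ and not on the intermediate range $[-R_{\varepsilon}(t),-ct]$ where the barrier must also be dominated; you would need the uniform bound on all of $(-\infty,ct]$, which is precisely what the paper's monotonicity reduction supplies. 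Since your main line does not rely on that parenthetical, the proof stands.
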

\begin{proof}
We only need to prove that the conclusion holds if $v_{0}(x)$ is decreasing since we can find two
decreasing smooth functions $\underline{v}_{0}$ and $\overline{v}_{0}$ such that $\underline{v}_{0}\leq v_{0}\leq\overline{v}_{0}.$ 
Then we complete the proof by using Lemma \ref{lem4.2}.

Since $v_{0}$ is decreasing, we obtain that $v(t,x)$ is decreasing in $x$ for any fixed $t$ by Remark \ref{re4.2}.
Therefore, for any $c\in(0,c^{*})$, there exist
$\delta\in(0,-f^{\prime}(1)), T^{*}>0$ and $M>0$ such that
$v(t,x)\geq1-Me^{-\delta t}\ \text{for}\ x\leq ct$ and $t\geq T^{*}$.
Then an argument similar to the proof of
Lemma \ref{lem4.5} gives us the conclusion.
\end{proof}

\subsection{End the proof of Theorem \ref{thmhomo}}

\begin{proof}[Proof of Theorem \ref{thmhomo}]
Assume that $|h(t)-c^{*}t|\leq A$ since $|h(t)-c^{*}|$ is bounded.
Let $(v(t,x),b(t))=(u(t,x+c^{*}t),h(t)-c^{*}t)$. Then $(v(t,x),b(t))$ satisfies
\begin{equation}\label{4.14}
\left\{
   \begin{aligned}
  v_{t}=v_{xx}+c^{*}v_{x}+f(v),\ \ &t\in\Real, x<b(t),\\
   v(t,b(t))=0,\ \ &t\in\Real,\\
   b^{\prime}(t)=-\mu v_{x}(t,b(t))-c^{*},\ \ &t\in\Real.\\
   \end{aligned}
   \right.
\end{equation}
By enlarging $B$ we may assume that $u(t,x)\geq1-\frac{\theta}{2}$ for $x-h(t)\leq-B$, i.e.,
$v(t,x)\geq1-\frac{\theta}{2}$ for $x-b(t)\leq-B$.
For any $\tau\in\Real,\ \xi\geq0$, denote $v^{\tau,\xi}(t,x):=v(t+\tau,x-\xi)$.\\
Step 1: Show that for any $\tau\in\Real$ and $\xi\geq B+2A$, $v^{\tau,\xi}(t,x)\geq v(t,x)$ for $x\leq b(t).$\\
First, note that $v^{\tau,\xi}(t,x)=v(t+\tau,x-\xi)\geq 1-\frac{\theta}{2}$ for $x\leq b(t)$
since $(x-\xi)-b(t+\tau)\leq b(t)-B-2A-b(t+\tau)\leq-B.$ Set
$$\varepsilon^{*}=\inf\{\varepsilon>0: v^{\tau,\xi}(t,x)+\varepsilon\geq v(t,x)\ \forall t\in\Real, x\leq b(t)\}.$$
Then by the same argument as used in the proof of Proposition \ref{propspaceshift}, we have $\varepsilon^{*}=0$, i.e.,
$v^{\tau,\xi}(t,x)\geq v(t,x)$ for $x\leq b(t).$

Now, for any fixed $\tau\in\Real$, let us define
$$\xi^{*}=\inf\{\xi>0: v^{\tau,\xi^{\prime}}(t,x)\geq v(t,x)\ \forall t\in\Real, x\leq b(t),
\xi^{\prime}\geq\xi\}.$$
Then $\xi^{*}\in[0,B+2A]$, $v^{\tau,\xi^{*}}(t,x)\geq v(t,x)$ for any $t\in\Real, x\leq b(t)$,
and $b(t+\tau)+\xi^{*}-b(t)\geq0$. Moreover,
$v^{\tau,\xi^{*}}(t,b(t+\tau)+\xi^{*})=v(t+\tau,b(t+\tau))=0.$
We want to show that $\xi^{*}=0$.\\
Step 2: Show that $\inf\limits_{t\in\mathbb{R}}\{b(t+\tau)+\xi^{*}-b(t)\}>0$.\\
Suppose that $\inf\limits_{t\in\mathbb{R}}\{b(t+\tau)+\xi^{*}-b(t)\}=0$.
Then there are two cases we need to consider:\\
Case 1: There exists $t_{0}$ such that $b(t_{0}+\tau)+\xi^{*}-b(t_{0})=0$.\\
Note that $b(t+\tau)+\xi^{*}-b(t)\geq0$ for any $t\in\Real$.
Then $b^{\prime}(t_{0}+\tau)-b^{\prime}(t_{0})=0$. Hence
\begin{equation}\label{4.15}
v^{\tau,\xi^{*}}(t_{0},b(t_{0}))=v_{x}(t_{0}+\tau,b(t_{0})-\xi^{*})=v_{x}(t_{0}+\tau,b(t_{0}+\tau))=v_{x}(t_{0},b(t_{0})),
\end{equation}
where the last equality follows from \eqref{4.14}. On the other hand, we have
$v^{\tau,\xi^{*}}(t,x)>v(t,x)$ for $x<b(t)$.
In fact, if there exists $(s,y)$ with $y<b(s)$ such that $v^{\tau,\xi^{*}}(s,y)=v(s,y)$,
then the strong maximum principle yields that
$v^{\tau,\xi^{*}}(t,x)\equiv v(t,x)$ for $x\leq b(t)$. In particular, from this, we have
$$v(t+n\tau,b(t)-n\xi^{*})=v(t,b(t))=0,\ \forall n\in\mathbb{N}.$$
But $\lim\limits_{n\to\infty}v(t+n\tau,b(t)-n\xi^{*})=\lim\limits_{n\to\infty}u(t+n\tau,h(t)-n\xi^{*})=1$. 
Therefore, $w(t,x)>0$ for $x<b(t)$.
By Hopf's Lemma, we have $w_{x}(t_{0},b(t_{0}))<0$, i.e.,
$v^{\tau,\xi^{*}}(t_{0},b(t_{0}))<v_{x}(t_{0},b(t_{0})),$ which contradicts \eqref{4.15}.
Thus Case 1 can not occur.\\
Case 2: There exists $\{t_{n}\}_{n\in\mathbb{N}}$ such that $\lim\limits_{n\to\infty}\big(b(t_{n}+\tau)+\xi^{*}-b(t_{n})\big)=0$.\\
Let $$b_{n}(t)=b(t+t_{n})-b(t_{n}),$$
$$v_{n}(t,x)=v(t+t_{n},x+b(t_{n}))\ \text{for}\ x\leq b_{n}(t).$$
Then using an argument similar to the one in the proof of Step 3 in Theorem \ref{thm3.1},
we know that Case 2 can not occur either.
Hence $\inf\limits_{t\in\mathbb{R}}\{b(t+\tau)+\xi^{*}-b(t)\}>0$.\\
Step 3: Show that $\xi^{*}=0$.\\
Claim: We have $\inf\limits_{-B\leq x-b(t)\leq0}\{v^{\tau,\xi^{*}}(t,x)-v(t,x)\}>0.$\\
Proof of Claim: If not, then there exists a sequence $\{(t_{n},x_{n})\}_{n\in\mathbb{N}}$ with $-B\leq x_{n}-b(t_{n})\leq0$
such that $v^{\tau,\xi^{*}}(t_{n},x_{n})-v(t_{n},x_{n})\to0$ as $n\to\infty$.
Note that $\inf\limits_{t\in\mathbb{R}}v^{\tau,\xi^{*}}(t,b(t))>0=v(t,b(t))$ since $\inf\limits_{t\in\mathbb{R}}\{b(t+\tau)+\xi^{*}-b(t)\}>0$ and $v_{xx}$ is uniformly bounded.
Then there exists $\sigma,\kappa>0$ such that
$v^{\tau,\xi^{*}}(t,x)-v(t,x)>\sigma$ for $-2\kappa\leq x-b(t)\leq0.$
Hence we may assume $-B\leq x_{n}-b(t_{n})<-2\kappa$. Let $w=v^{\tau,\xi^{*}}-v$.
Then $w$ satisfies
\begin{equation}\label{eqofw}
w_{t}=w_{xx}+\frac{f(v^{\tau,\xi^{*}})-f(v)}{v^{\tau,\xi^{*}}-v}w\ \text{for}\ x-b(t)<0,
\end{equation}
$w(t,x)\geq0$ for $x-b(t)\leq0$, and $w(t_{n},x_{n})\to 0$ as $n\to\infty$.
Take $K\in\mathbb{Z}^{+}$ with $\kappa K>B+2b_{0},$
where $b_{0}=\sup\limits_{t\in\mathbb{R}}|b^{\prime}(t)|$. For $i=0,1,\cdots,K-1$, we set
$t_{n}^{i}, x_{n}^{i},$ and $E_{n}^{i}$ as the proof of Proposition \ref{prop3.2}. By the similar arguments there,
we can still obtain a contradiction. Therefore the proof of claim is complete.

Suppose that $\xi^{*}>0$. Now by the claim above, there exists $\xi_{0}\in(0,\xi^{*})$ such that
$v^{\tau,\xi^{*}-\xi}(t,x)\geq v(t,x)$ for any $\xi\in[0,\xi_{0}], -B\leq x-b(t)\leq0.$
Note that
$v(t,x)\geq1-\frac{\theta}{2}$ for any $ x-b(t)\leq-B.$
Then for $\xi_{0}$ sufficiently small,
$$v^{\tau,\xi^{*}-\xi}(t,x)\geq1-\theta$$
for any $\xi\in[0,\xi_{0}], x-b(t)\leq-B$ since $v_{x}$ is uniformly bounded. Now setting
$$\varepsilon^{*}=\inf\{\varepsilon>0: v^{\tau,\xi^{*}-\xi}(t,x)+\varepsilon\geq v(t,x)\ \forall t\in\Real, x-b(t)\leq-B\},$$
we have $\varepsilon^{*}=0$ by the same argument as used in the proof of Proposition \ref{propspaceshift}. Then
$v^{\tau,\xi^{*}-\xi}(t,x)\geq v(t,x)$ for any $\xi\in[0,\xi_{0}], x-b(t)\leq-B.$
Hence $v^{\tau,\xi^{*}-\xi}(t,x)\geq v(t,x)$ for any $\xi\in[0,\xi_{0}], x-b(t)\leq0,$
which contradicts the definition of $\xi^{*}$.
Therefore, $\xi^{*}=0.$\\
Step 4: Up to now, we have proved that
$v^{\tau,0}(t,x)\geq v(t,x)$, i.e., $v(t+\tau,x)\geq v(t,x)$ for $(t,\tau)\in\Real^{2}$
and $x\leq b(t)$. Then $v$ is independent of $t$.
Taking the derivative of the second equation of \eqref{4.14} with respect to $t$, we have
$$v_{x}(t,b(t))b^{\prime}(t)=0.$$
Then $b^{\prime}(t)=0$ since $v_{x}(t,b(t))<0$. Therefore, $b(t)\equiv\text{constant}.$
By Theorem \ref{thm4.1}, $v=q_{c^{*}}$ up to a translation.
\end{proof}

\section{Proof of Theorems \ref{heterotime} and \ref{heterospace}}

This section is devoted to proving Theorems \ref{heterotime} and \ref{heterospace}.

\begin{proof}[Proof of Theorem \ref{heterotime}]
	Let $w(t,x)=u(t,x+h(t))$. Then $w$ satisfies \eqref{fixedbd}.
	It is sufficient to show $w(t,x)=\phi(t,x+\zeta(t))$,
	where $(\phi(t,x),\zeta(t))$ is the time almost periodic positive semi-wave solution of \eqref{time} with $\zeta(0)=0$.
	An observation is that $u(t,x)<u_{c}(t)$ for $t\in\mathbb{R}$.
	In fact,
	we can regard $u(t;M)$, which is a solution of \eqref{timeode}
	with initial value $M$ large enough, as a upper solution of $u(t,x)$ (extended by 0 on $\{x>h(t)\}$) since $u(t,x)$ is bounded.
	Then by using \cite[Remark 3.2]{LLS}, we can easily obtain that $u(t,x)<u_{c}(t)$ for $t\in\mathbb{R}$.
	
	Denote $\Phi(t,x)=\phi(t,x+\zeta(t)).$
	Note that $w(t,x)=u(t,x+h(t))<u_{c}(t)$. Then by \cite[Lemma 5.3]{LLS}, we have
	$$w(t,x)\leq\Phi(t,x)\ \forall t\in\Real, x<0.$$
	Suppose that $w(t,x)\lvertneqq\Phi(t,x)$. Then it follows from \cite[Lemma 4.6]{LLS} that
	$$\rho(w(t_{2},\cdot),\Phi(t_{2},\cdot))<\rho(w(t_{1},\cdot),\Phi(t_{1},\cdot))\
	\forall t_{1}<t_{2},$$
	since $\lim\limits_{x\to-\infty}w(t,x)=u_{c}(t)=\lim\limits_{x\to-\infty}\Phi(t,x).$
	Let $\rho_{-}=\lim\limits_{t\to-\infty}\rho(w(t,\cdot),\Phi(t,\cdot)).$
	Then $\rho_{-}>0.$ Let $\{t_{n}\}_{n\in\mathbb{N}}$ be a sequence with $t_{n}\to-\infty$.
	Denote $c_{n}(t,u)=c(t+t_{n},u), w_{n}(t,x)=w(t+t_{n},x)$, and $\Phi_{n}(t,x)=\Phi(t+t_{n},x)$.
	Then there exists a subsequence, still denoted by $\{t_{n}\}_{n\in\mathbb{N}}$, such that
	$c_{n}(t)\to c^{*}(t), w_{n}(t,x)\to w^{*}(t,x),\ \text{and}\
	\Phi_{n}(t,x)\to \Phi^{*}(t,x)$
	with $c^{*}\in H(c), w^{*}(t,\cdot), \Phi^{*}(t,\cdot)\in X$ for any $t\in\Real.$
	Obviously, we have $w^{*}(t,x)\leq \Phi^{*}(t,x)$.
	Then $\rho(w^{*}(t,\cdot),\Phi^{*}(t,\cdot))$ is well defined and
	\begin{equation}\label{6.2}
	\rho(w^{*}(t,\cdot),\Phi^{*}(t,\cdot))\equiv\rho_{-}>0.
	\end{equation}
	On the other hand, $w^{*}$ and $\Phi^{*}$ satisfy
	\begin{equation*}
	\left\{
	\begin{aligned}
	w^{*}_{t}=w^{*}_{xx}-\mu w^{*}_{x}(t,0)w^{*}_{x}+w^{*}(c^{*}(t)-w^{*}),\ \ &t\in\Real, x<0,\\
	w^{*}(t,0)=0,\ \ &t\in\Real,\\
	\end{aligned}
	\right.
	\end{equation*}
	and
	\begin{equation*}
	\left\{
	\begin{aligned}
	\Phi^{*}_{t}=\Phi^{*}_{xx}-\mu \Phi^{*}_{x}(t,0)\Phi^{*}_{x}+\Phi^{*}(c^{*}(t)-\Phi^{*}),\ \ &t\in\Real, x<0,\\
	\Phi^{*}(t,0)=0,\ \ &t\in\Real,\\
	\end{aligned}
	\right.
	\end{equation*}
	respectively.
	By \cite[Lemma 4.6]{LLS}, $\rho(w^{*}(t,\cdot),\Phi^{*}(t,\cdot))$ is strictly decreasing in $t$
	since $\lim\limits_{x\to-\infty}w^{*}(t,x)=u_{c}(t)=\lim\limits_{x\to-\infty}\Phi^{*}(t,x)$,
	which  contradicts \eqref{6.2}.
	Hence we must have $\rho(w(t,\cdot),\Phi(t,\cdot))\equiv0$, i.e., $w(t,x)\equiv\Phi(t,x)$.
\end{proof}

\begin{proof}[Proof of Theorem \ref{heterospace}] This conclusion follows (iii) of \cite[Theorem 1.1]{L} directly. 

	In fact, if $(u(t,x),h(t))$ is a transition semi-wave of \eqref{space} which connects $v_{a}(x)$ and $0$,
	then $\lim\limits_{x\to-\infty}|u(t,x+h(t))-v_{a}(x+h(t))|=0$ uniformly in $t\in\Real$.
	Hence for $\varepsilon_{0}=\frac{1}{2}\inf\limits_{x\in\Real}v_{a}(x)>0$,
	there exists $\delta>0$ such that $u(t,x+h(t))\geq v_{a}(x+h(t))-\varepsilon_{0}\geq\varepsilon_{0}$
	for any $t\in\Real, x\leq\delta,$ i.e., $u(t,x)\geq\varepsilon_{0}$ for any $t\in\Real, x\in(-\infty,h(t)-\delta]$.
	Note that $u$ is bounded. Using \cite[Proposition 1.1]{L}, we can easily show that $u(t,x)\leq v_{a}(x)$.
	Then it follows from (iii) of \cite[Theorem 1.1]{L} that $(u(t,x),h(t))$ is the almost periodic semi-wave solution of \eqref{space}.
\end{proof}

\section{A transition semi-wave without any global mean speeds}

In this section, we prove the existence of transition semi-waves, which do not have global mean speeds, for some special heterogeneous equations.
First, we prove a theorem we will need later.

\begin{thm}\label{thm5.1}
Let the assumptions of Lemma \ref{lem4.3} hold. Then there exists $\hat{G}\in\mathbb{R}$ such that
$$\lim\limits_{t\to+\infty}(g(t)-c^{*}t-\hat{G})=0,
\lim\limits_{t\to+\infty}g^{\prime}(t)=c^{*},$$
and
$$\lim\limits_{t\to+\infty}\sup\limits_{x\leq g(t)}|v(t,x)-q_{c^{*}}(g(t)-x)|=0.$$
Moreover, if $f$ is of type $(f_{M})$, then the condition 
$1\leq\liminf\limits_{x\to-\infty}v_{0}(x)\leq\limsup\limits_{x\to-\infty}v_{0}(x)<+\infty$ 
can be replaced by 
$0\leq\liminf\limits_{x\to-\infty}v_{0}(x)\leq\limsup\limits_{x\to-\infty}v_{0}(x)<+\infty$.
\end{thm}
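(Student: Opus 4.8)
The plan is to pass to the moving frame and combine parabolic compactness with the rigidity of bounded transition semi-waves already established in Theorem \ref{thmhomo}. Writing $\phi(t)=g(t)-c^{*}t$, the a priori estimates of Lemmas \ref{lem4.3}, \ref{lem4.5} and \ref{lem4.6} guarantee that $\phi$ is bounded and that, for $t\geq T^{*}$, the solution $v$ is trapped between $(1-Me^{-\delta t})q_{c^{*}}(\underline h(t)-x)$ and $(1+M'e^{-\delta t})q_{c^{*}}(\overline h(t)-x)$, where $\underline h(t)-c^{*}t$ and $\overline h(t)-c^{*}t$ stay bounded. Together with the boundary regularity of Proposition \ref{prop_prioriestimate}, this furnishes uniform bounds on $v(t,\cdot+g(t))$ and on $g'$, which is what makes the compactness arguments below available.

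First I would establish convergence of the profile. Fix any sequence $t_{n}\to+\infty$ and set $v_{n}(s,x)=v(s+t_{n},x+g(t_{n}))$, $g_{n}(s)=g(s+t_{n})-g(t_{n})$. Using the a priori bounds one extracts a subsequence along which $g_{n}\to G$ in $C^{1}_{loc}(\Real)$ and $v_{n}\to V$ in $C^{1,2}_{loc}$, with $(V,G)$ a bounded entire solution of the (autonomous) free boundary problem and $G(0)=0$. The $q_{c^{*}}$-shaped sandwich inherited from Lemma \ref{lem4.5} passes to the limit and yields $q_{c^{*}}(a(s)-y)\leq V(s,y+G(s))\leq q_{c^{*}}(b(s)-y)$ with $a(s),b(s)$ uniformly bounded, so $V(s,y+G(s))\to1$ as $y\to-\infty$ \emph{uniformly} in $s$; hence $(V,G)$ is a bounded transition semi-wave connecting $1$ and $0$. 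Now Theorem \ref{thmhomo} applies and forces $V(s,x)=q_{c^{*}}(G(s)-x)$ with $G(s)=c^{*}s$ (because $G(0)=0$). Since the limit $q_{c^{*}}(c^{*}s-x)$ is independent of the chosen subsequence, the full limits
$$\lim_{t\to+\infty}\sup_{x\leq g(t)}|v(t,x)-q_{c^{*}}(g(t)-x)|=0,\qquad \lim_{t\to+\infty}g'(t)=c^{*}$$
follow, the second by reading off $g_{n}'(0)=g'(t_{n})\to G'(0)=c^{*}$ for every sequence.

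It remains to produce the limit $\hat G$ of $\phi(t)=g(t)-c^{*}t$. Here I would feed the profile convergence into the comparison machinery of Lemma \ref{lem4.5}. By the uniform convergence of $v(t,\cdot+g(t))$ to the strictly increasing $q_{c^{*}}$ (Theorem \ref{thm4.1}), the minimal shift $\gamma$ for which $(1+Me^{-\delta t_{0}})q_{c^{*}}(c^{*}t_{0}+\gamma-\cdot)$ dominates $v(t_{0},\cdot)$ on $(-\infty,g(t_{0})]$ satisfies $\gamma=\phi(t_{0})+o(1)$ as $t_{0}\to+\infty$. Using this datum as the initial shift of the supersolution built in Lemma \ref{lem4.5} and invoking the comparison principle (Lemma \ref{lem4.2}), its free boundary $\overline h(t)$ dominates $g(t)$ for all $t\geq t_{0}$ and obeys $\overline h(t)-c^{*}t\to \gamma+Ce^{-\delta t_{0}}$, whence $\limsup_{t\to+\infty}\phi(t)\leq \phi(t_{0})+o(1)+Ce^{-\delta t_{0}}$. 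Letting $t_{0}\to+\infty$ along a sequence realizing $\liminf_{t}\phi$ gives $\limsup_{t}\phi\leq\liminf_{t}\phi$, which forces $\phi(t)\to\hat G$, i.e. $\lim_{t\to+\infty}(g(t)-c^{*}t-\hat G)=0$.

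I expect the main obstacle to lie in the second paragraph: verifying that the recentered limit $(V,G)$ is \emph{genuinely} a bounded transition semi-wave, that is, that the stabilization $V\to1$ behind the front is uniform in $s\in\Real$ rather than merely pointwise, so that Theorem \ref{thmhomo} truly applies. This uniformity is exactly what the two-sided $q_{c^{*}}$-sandwich of Lemma \ref{lem4.5} supplies, and passing the free boundary condition to the limit (via the Hopf-type and boundary estimates already exploited in Proposition \ref{prop_prioriestimate} and Theorem \ref{thm3.1}) is the delicate point. Finally, in the monostable case, Remark \ref{re4.3} together with the corresponding clause of Lemma \ref{lem4.6} shows that all of the above estimates persist under the weaker hypothesis $0\leq\liminf_{x\to-\infty}v_{0}(x)$, so the same argument yields the stated conclusion.
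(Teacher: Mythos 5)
Your overall plan is sound, and the first half is a genuinely different route from the paper's. The paper's proof is an outline that defers to Du--Matsuzawa--Zhou: it introduces the weighted energy $E(t)=\int_{-\infty}^{g(t)-c^{*}t}e^{c^{*}z}\{\tfrac12\tilde v_x^2-F(\tilde v)\}\,dx$ and uses the Lyapunov-functional argument of \cite[Subsec.~3.2]{DMZ} to show that along a subsequence $g(\tilde t_n+\cdot)-c^{*}(\tilde t_n+\cdot)$ converges to a constant and the profile converges to $q_{c^*}$, and then invokes the upper/lower solutions of \cite[Subsec.~3.3]{DMZ} to remove the subsequence. You replace the energy method by compactness plus the rigidity statement of Theorem \ref{thmhomo}; this is legitimate (Theorem \ref{thmhomo} is proved in Section 4 without reference to Theorem \ref{thm5.1}, so there is no circularity) and the verification that the blown-up limit $(V,G)$ is a \emph{uniform} transition semi-wave via the two-sided $q_{c^*}$-sandwich is exactly the right point to worry about and is handled correctly. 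Two cosmetic remarks: for the Cauchy-problem solution $(v,g)$ the sandwich and the boundedness of $g(t)-c^{*}t$ come from Lemma \ref{lem4.6} and Remark \ref{re4.3} (Lemma \ref{lem4.5} concerns the entire solution $(u,h)$), and the Schauder/boundary estimates of Proposition \ref{prop_prioriestimate} must be replaced by their forward-in-time analogues valid for $t\geq1$, which is what \cite[Theorem 2.11]{DDL} provides.

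There is, however, a genuine quantitative gap in your final step. You claim that the minimal shift $\gamma$ for which $(1+Me^{-\delta t_0})\,q_{c^*}(c^{*}t_0+\gamma-\cdot)$ dominates $v(t_0,\cdot)$ satisfies $\gamma=\phi(t_0)+o(1)$. This need not hold. Write $\eta(t_0)=\sup_{x\le g(t_0)}|v(t_0,x)-q_{c^*}(g(t_0)-x)|$; your compactness argument gives $\eta(t_0)\to0$ but with \emph{no rate}, and in particular you cannot rule out $\eta(t_0)\gg e^{-\delta t_0}$. In the intermediate region where $1-q_{c^*}(g(t_0)-x)$ is of order $\eta(t_0)$, the available upper bounds are $v\le q_{c^*}(g(t_0)-x)+\eta(t_0)$ and $v\le 1+Me^{-\delta t_0}$, and the multiplicative cushion contributes only $Me^{-\delta t_0}\ll\eta(t_0)$ while the translation contributes only $O(q_{c^*}'\cdot(\gamma-\phi(t_0)))$ with $q_{c^*}'\to0$ there; absorbing the error then forces $\gamma-\phi(t_0)\gtrsim\lambda^{-1}\log\big(\eta(t_0)e^{\delta t_0}\big)$, which may diverge. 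The fix is standard and is in effect what \cite[Subsec.~3.3]{DMZ} does: take the amplitude of the supersolution to be $1+\sigma(t_0)e^{-\delta(t-t_0)}$ with $\sigma(t_0)=2\max\{\eta(t_0),Me^{-\delta t_0}\}\to0$ (still admissible in the supersolution computation since $\sigma(t_0)$ is eventually below the threshold $\eta$ where $f'<-\delta$), and similarly for the subsolution. Then the initial comparison holds with shift $\phi(t_0)+O(\sigma(t_0))$, the free boundary of the supersolution satisfies $\overline h(t)-c^{*}t\to\phi(t_0)+O(\sigma(t_0))$, and your squeeze $\limsup_{t}\phi\le\liminf_{t}\phi$ goes through. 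With this correction the argument is complete; as written, the step would fail.
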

\begin{proof}
The strategy of proof is almost the same as that used in subsections 3.2 and 3.3 in \cite{DMZ}. We only point out the outline of the proof.
Setting $\tilde{v}(t,x)=v(t,x+c^{*}t)$, $F(s)=\int_{0}^{s}f(\theta)d\theta$, and
$$E(t)=\int_{-\infty}^{g(t)-c^{*}t}e^{c^{*}z}\{\frac{1}{2}\tilde{v}^{2}_{x}-F(\tilde{v})\}dx,$$
we can prove as subsection 3.2 in \cite{DMZ} to obtain that for any sequence $\{t_{n}\}_{n\in\mathbb{N}}$ with $t_{n}\to+\infty$,
there exists a subsequence $\{\tilde{t}_{n}\}_{n\in\mathbb{N}}\subset\{t_{n}\}_{n\in\mathbb{N}}$ such that
$\lim\limits_{n\to\infty}\big(g(\tilde{t}_{n}+\cdot)-c^{*}(\tilde{t}_{n}+\cdot)\big)=\hat{G}$ in
$C_{loc}^{1}(\mathbb{R})$ for some constant $\hat{G}\in\mathbb{R}$. Moreover,
$\lim\limits_{n\to\infty}\sup\limits_{x\leq\hat{G}}|\tilde{v}(\tilde{t}_{n},x)-q_{c^{*}}(\hat{G}-x)|=0$.
Here we have followed the convention that $q_{c^{*}}(x)=0$ for $x\leq0$ and $\tilde{v}(t,x)=0$ for $x\geq g(t)-c^{*}t.$
Now using the upper and lower solutions constructed in subsection 3.3 in \cite{DMZ}, we obtain the conclusions we need.
\end{proof}

Let us turn our attention to constructing a transition semi-wave for some special $f$.
More precisely, assume that $f$ satisfies:
\begin{equation}\label{5.1}
\left\{
   \begin{aligned}
  &(1) f(t,x,0)=f(t,x,1)=0\ \text{for}\ (t,x)\in\mathbb{R}^{2},\\
  &(2) \exists t_{1}<t_{2}\in\mathbb{R},\exists f_{1},f_{2}\ \text{satisfying}\ \eqref{f}\ \text{s.t.}\\
  &\ \ \ f(t,s)=f_{1}(s)\ \forall (t,s)\in(-\infty,t_{1}]\times[0,1]\ \text{and}\\
  &\ \ \ f(t,s)=f_{2}(s)\ \forall (t,s)\in(t_{2},+\infty]\times[0,1].
   \end{aligned}
   \right.
\end{equation}

\begin{example}\label{ex6.1}
Assume that \eqref{5.1} holds, and that there exists $(c_{i}^{*},q_{c_{i}^{*}})$ satisfying \eqref{2.1} with $f$ replaced by $f_{i}$, $i=1,2$. Then there exists a transition semi-wave of \eqref{1.1}, which connects $1$ and $0$.
Moreover, $h(t)=c^{*}_{1}t$ for $t\leq t_{1}$ and $\lim\limits_{t\to+\infty}h(t)/t=c_{2}^{*}.$
\end{example}
\begin{proof}
Taking $(u(t,x),h(t))=(q_{c^{*}_{1}}(c^{*}_{1}t-x),c^{*}_{1}t)$  for $t\leq t_{1}$ and $x\leq c^{*}_{1}t$,
and then letting it evolve as time goes on, we can obtain an entire solution $(u,h)$ of \eqref{1.1}.
Next we only need to show that $(u,h)$ is the solution what we need exactly.

Obviously, we have
\begin{equation}\label{5.2}
\lim\limits_{x\to-\infty}|u(t,x+c^{*}_{1}t)-1|=0\ \text{uniformly w.r.t.}\ t\leq t_{1}.
\end{equation}
Let us now study the behavior of $u$ on the time interval $[t_{1}, +\infty)$.
From the strong parabolic maximum principle, there holds $0<u(t,x)<1$ for all
$t\in\mathbb{R}, x\leq h(t)$.
Furthermore, from standard parabolic estimates, the function $u$ satisfies the limiting conditions
$$u(t,-\infty)=1\ \text{locally in}\ t\in\mathbb{R},$$
since $f(t,x,1)\equiv0.$ In particular,
\begin{equation}\label{5.3}
\lim\limits_{x\to-\infty}|u(t,x+h(t))-1|=0\ \text{uniformly w.r.t.}\ t\in (t_{1},t_{2}].
\end{equation}
Now regard $(u,h)$ as a solution of
\begin{equation*}
\left\{
   \begin{aligned}
  u_{t}=u_{xx}+f(t,x,u),\ \ &t>t_{2},\ x<h(t),\\
   h^{\prime}(t)=-\mu u_{x}(t,h(t)),\ \  &t>t_{2},\\
   u(t,h(t))=0,\ \ \ &t>t_{2},\\
   \end{aligned}
   \right.
\end{equation*}
which starts at time $t_{2}$. Then by Theorem \ref{thm5.1}, we have
\begin{equation}\label{5.4}
\lim\limits_{t\to+\infty}\sup\limits_{x\leq h(t)}|u(t,x)-q_{c_{2}^{*}}(h(t)-x)|=0.
\end{equation}
It follows from \eqref{5.3} and \eqref{5.4} that
$$\lim\limits_{x\to-\infty}|u(t,x+h(t))-1|=0\ \text{uniformly w.r.t.}\ t\in [t_{2},+\infty).$$
Combining the above equation with \eqref{5.2} and \eqref{5.3}, we konw that $(u,h)$ is a transition semi-wave connecting 1 and 0.
The proof is thereby complete.
\end{proof}

\begin{rem}\label{re5.1}

\begin{enumerate}
\item[1)] There does not exist a global mean speed if $c^{*}_{1}\neq c^{*}_{2}$.
	Moreover, $f(t,x,s)$ can be viewed as a local perturbation of a homogeneous equation if $f_{1}=f_{2}$.
\item[2)] Let $\mathcal{F}=\{f\ \text{satisfies}\ \eqref{f}:\eqref{2.1}\ \text{possesses a solution}\ (c^{*},q_{c^{*}})\}$.
	Then whatever $f_{1}\in\mathcal{F}$ may be and whatever the profile of $f$, which satisfies \eqref{5.1}, between times $t_{1}$ and $t_{2}$ may be,
	the speed of the position $h(t)$ at large time is determined only by $f_{2}$.
\item[3)] If $f_{1}$ in \eqref{5.1} equals to $a(x)u(1-u)$ for some positive periodic function $a(x)$,
	then we can still construct a transition semi-wave connecting $1$ and $0$ such that
	$h^{\prime}(t)$ equals to a periodic function for $t\leq t_{1}$ and $\lim\limits_{t\to+\infty}h(t)/t=c_{2}^{*}.$
	In fact, using \cite[Theorem 1.2]{DLiang}, we can prove it by an argument very similar to the proof of Example \ref{ex6.1}.
\end{enumerate}	
\end{rem}

\end{document}